\newtheorem{theoremnine}{Theorem \ref{exterior:sheaf-essential}.}
\newtheorem{theoremthree}{Theorem \ref{theorem:boolean:cover}.}
\newtheorem{theoremfour}{Theorem \ref{theorem:sheafequalscellular:boolean}.}
\newtheorem{theoremseven}{Theorem \ref{section3:theorem:deletion.restriction}.}
\newtheorem{theoremeight}{Theorem \ref{exterior:cellular:essential}.}
\spnewtheorem*{remark*}{Remark}{\it}{\rm}
\spnewtheorem*{remarks*}{Remarks}{\it}{\rm}
\spnewtheorem*{firstproof*}{First proof}{\it}{\rm}
\spnewtheorem*{secondproof*}{Second proof}{\it}{\rm}
\DeclareMathAlphabet{\ams}{U}{msb}{m}{n}
\DeclareMathAlphabet{\goth}{U}{euf}{m}{n}
\def\id{\text{id}}
\def\codim{\text{codim}\,}
\def\HH{\mathcal H}
\def\aa{\alpha}\def\ss{\sigma}
\def\ve{\varepsilon}
\def\Z{\ams{Z}}
\def\C{\ams{C}}
\def\B{\ams{B}}
\def\N{\ams{N}}
\def\0{\mathbf{0}}
\def\1{\mathbf{1}}
\def\quo{/\kern -.45em\sim}
\def\Langle{\langle\kern -2pt\langle}
\def\Rangle{\rangle\kern -1.9pt\rangle}
\newcommand{\rmod}{\vrule width 0mm height 0 mm depth
  0mm_R\mathbf{Mod}}
\newcommand{\ra}{\rightarrow}
\newcommand{\rk}{rk}
\newcommand{\cork}[1]{|\kern0.75pt{#1}\kern1pt|}
\newcommand{\blob}{\bullet}
\newcommand{\HS}[1]{H_{{#1}}}
\newcommand{\HC}[1]{H^{\kern1pt\textrm{cell}}_{{#1}}}
\newcommand{\chiprime}{\chi^\prime_{(L,F)}(1)}
\newcommand{\chiprimek}[1]{\chi^{({#1})}_{L}(1)}
\newcommand{\chiprimeperpk}[1]{\chi^{({#1})}_{(L,F^\perp)}(1)}
\title{Sheaf homology of hyperplane arrangements, Boolean covers and
  exterior powers} 
\author{Brent Everitt and Paul
  Turner}
\institute{
{\sc Brent Everitt:} Department of Mathematics, University of York, York
YO10 5DD, United Kingdom. \email{brent.everitt@york.ac.uk}. 
{\sc Paul Turner:} Section de math\'ematiques,  
Universit\'e de Gen\`eve, 7-9 Rue du Conseil-G\'en\'eral, CH-1205, Geneva, Switzerland.
\email{paul.turner@unige.ch}.
}
\titlerunning{Sheaf homology, Boolean covers and exterior powers}
\authorrunning{Brent Everitt and Paul Turner}
\begin{document}

\maketitle

\begin{abstract}
We compute the sheaf homology of the intersection lattice of a
hyperplane arrangement with coefficients in the graded exterior sheaf
$\Lambda^\blob F$ of the natural sheaf $F$. This builds on the results
of our previous paper \cite{EverittTurner19a} where this homology was
computed for $\Lambda^1F=F$, itself a generalisation of an old result
of Lusztig. The computational machinery we develop in this paper is
quite different though: sheaf homology is lifted to what we call
Boolean covers, where we instead compute homology cellularly. A number of tools
are given for the cellular homology of these Boolean covers,
including a deletion-restriction long exact sequence. 
\end{abstract} 

\maketitle


\section*{Introduction}

The combinatorics of a hyperplane arrangement is encapsulated by its
intersection lattice. The homology of
this lattice, with constant coefficients, was first determined in
\cites{Folkman66,Bjorner82}, with
Quillen \cite{Quillen78} showing that it has the homotopy type of a wedge of
spheres. 
Interest in homology may be revived though by taking coefficients in a more
interesting local system, that is to say, in a \emph{sheaf\/} on the
lattice. The resulting sheaf homology $\HS {*}(L\kern-1pt\setminus \kern-1pt
\0; F)$, 
where $L$ is the intersection lattice of a hyperplane arrangement and
$F$ is some interesting (naturally occuring) sheaf, then becomes
worthy of investigation. 

Intersection lattices of hyperplanes arrangements come equipped with a
canonical sheaf as the elements of the lattice are vector spaces. We
call this the natural sheaf, and in \cite{EverittTurner19a} we showed
that the reduced sheaf homology is trivial in all degrees, except the
top one, whose dimension is related to the $\beta$-invariant of the
arrangement, i.e. the derivative of the
characteristic polynomial of $L$ evaluated at $1$ -- see \cite{MR921071}.
This generalises, to an arbitrary arrangement, an old result of
Lusztig \cite{Lusztig74} where he considers the arrangement of all
hyperplanes in a vector space over a finite field. 
There are various other sheaves that can be
put on an intersection lattice -- see \cite{Orlik-Terao92} -- but
they turn out to be what Yuzvinsky \cite{Yuzvinsky91} calls local
sheaves, and so the homology vanishes for general reasons. The natural
sheaf is not local.

In this paper our principal object of interest is the sheaf homology
of $L$ with coefficients in the graded sheaf $\Lambda^\blob F$, where
$F$ is the natural sheaf and $\Lambda^j F$ is the $j$-th exterior
power of $F$. We concentrate first on the case where the arrangement
is {\em essential}, meaning that the intersection of all the hyperplanes
is trivial. Our result here   is: 

\begin{theoremnine}
\label{thm9}
Let $L$ be the intersection lattice of an essential hyperplane
arrangement in a space $V$. Let
$F$ be the natural sheaf on $L$ and $\Lambda^{j}F$ be the
$j$-th exterior power of $F$.
If $\rk(L)\geq 2$ then 
$\HS i (L\kern-1pt\setminus\kern-1pt\0;\Lambda^{j}F)$ is
trivial unless: \\ 
\hspace*{1cm}-- either
$0<i<\rk(L)-1$ and $i+ j = \rk(L) - 1$,
in which case
$$ 
\dim\HS {i}
(L\kern-1pt\setminus\kern-1pt\0;\Lambda^{j}F) =
\frac{ (-1)^{i+1}}{j!} \chiprimek j
$$
\hspace*{1cm} -- or, $i=0$ and  or $j= \rk(L) -1$, in which case
$$
\dim\HS {0}
(L\kern-1pt\setminus\kern-1pt\0;\Lambda^{j}F) =
 \binom{\rk(L)}{j}
- 
\frac{1}{j!}
\chiprimek j
 $$
\hspace*{1cm} -- or, $i=0$ and $j<\rk(L) -1$, in which case 
$$
\dim\HS {0}
(L\kern-1pt\setminus\kern-1pt\0;\Lambda^{j}F) =
\binom{\rk (L)}{j}
 $$
 where  $\chi^{({j})}_{L}(t)$ is the $j$-th derivative of the
 characteristic polynomial of $L$. 
\end{theoremnine}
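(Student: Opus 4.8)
The plan is to compute $\HS{*}(L\setminus\0;\Lambda^\blob F)$ by lifting the problem to Boolean covers, as announced in the abstract, and then running a deletion--restriction argument combined with an induction on $\rk L$. The sheaf $\Lambda^\blob F$ decomposes as a direct sum $\bigoplus_j \Lambda^j F$, so it suffices to handle each exterior power $\Lambda^j F$ separately; the final answer is graded by $j$. The base case $\rk L = 2$ should be a direct computation: here $L\setminus\0$ is a disjoint union of rank-one flats sitting over a rank-two top space, $\Lambda^1 F$ recovers the result of \cite{EverittTurner19a}, $\Lambda^0 F$ is the constant sheaf (giving the classical wedge-of-spheres homology, hence the $\binom{\rk L}{0}=1$ in degree $0$ and nothing else in the allowed range), and $\Lambda^2 F$ is a line bundle supported only at the top, contributing in degree $0$ only. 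One checks the three cases of the theorem reduce correctly when $\rk L = 2$.

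For the inductive step I would fix a hyperplane $H$ of the arrangement, pass to the deletion $L' = L\setminus H$ and the restriction $L'' = L^H$, and invoke the deletion--restriction long exact sequence for cellular homology of Boolean covers (Theorem \ref{section3:theorem:deletion.restriction}). The key point is that $\Lambda^j F$ restricts compatibly along deletion and restriction: on the restriction $L''$ the natural sheaf is the quotient of $F$ by the line $H^\perp$ (or the appropriate sub/quotient, depending on conventions), so there is a short exact sequence of sheaves $0 \to \Lambda^{j-1}F'' \to \Lambda^j F \to \Lambda^j F' \to 0$ relating the three, or an analogous filtration; feeding this into the long exact sequence and using the inductive hypothesis on $L'$ and $L''$ (both of rank one less, or the restriction drops rank) pins down all but one homology group, and the dimension count is forced by the Euler characteristic, which is the alternating sum computed from the characteristic polynomial via $\chi_{L}(t) = \chi_{L'}(t) - \chi_{L''}(t)$. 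Differentiating this relation $j$ times gives exactly the recursion satisfied by the claimed dimensions $\tfrac{1}{j!}\chiprimek j$, so the numbers match by induction.

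The main obstacle I anticipate is twofold. First, one must verify that the cellular homology of the Boolean cover genuinely computes the sheaf homology $\HS{*}(L\setminus\0;\Lambda^j F)$ --- this is presumably the content of the earlier Theorems \ref{theorem:boolean:cover} and \ref{theorem:sheafequalscellular:boolean}, but applying them requires checking that $\Lambda^j F$ is of the right type (not a local sheaf in Yuzvinsky's sense) and that the Boolean cover construction is functorial for the deletion--restriction maps. Second, and more delicate, is controlling the connecting homomorphisms in the long exact sequence precisely enough to see that the sequence splits into short pieces in the expected degrees: a priori the long exact sequence only gives inequalities between dimensions, and one needs a vanishing or surjectivity statement --- most plausibly that the map $\HS{i}(L';\Lambda^j F') \to \HS{i}(L'';\Lambda^{j-1}F'')$ (or its analogue) vanishes outside the top degree because of the concentration of homology forced by induction --- to conclude equality. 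Handling the boundary cases $i = 0$ and $j = \rk L - 1$, where the pattern degenerates, will require separate bookkeeping, since there the "interesting" group sits in degree $0$ and the formula acquires the extra $\binom{\rk L}{j}$ term coming from $H_0$ of the underlying space.
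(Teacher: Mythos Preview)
Your overall architecture is right: pass to the Boolean cover, run a deletion--restriction long exact sequence, and close with an Euler-characteristic computation tied to the characteristic polynomial. This is exactly what the paper does. But there are two concrete problems with your execution.

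First, the induction variable. You propose induction on $\rk L$, but the deletion $L_a$ does not in general drop rank: if $a$ is a dependent atom (and one exists whenever $L$ is non-Boolean), then $\rk L_a=\rk L$, so the inductive hypothesis does not apply to the deletion. If instead you delete an independent atom, $L_a$ has smaller rank but is no longer essential (it still sits in the same ambient $V$), so the essential hypothesis fails and you would need the non-essential theorem, which the paper proves only \emph{after} the essential one. The paper resolves this by inducting on the number $|A|$ of hyperplanes, which always drops under deletion, and by deliberately choosing $a$ dependent so that the deletion stays essential with the same rank. The Boolean case is handled separately as the seed of the induction.

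Second, the sheaf relations along deletion--restriction. You posit a short exact sequence $0\to\Lambda^{j-1}F''\to\Lambda^j F\to\Lambda^j F'\to 0$, but that is not how the long exact sequence of Theorem~\ref{section3:theorem:deletion.restriction} is organised: it uses the \emph{same} sheaf $\Lambda^j F$ throughout, merely restricted to the sublattices; the shift is in the homological index $i$, not in $j$. The relevant segment is
\[
\HC{i}(\widetilde{L_a};\Lambda^j F_a)\ra\HC{i}(\widetilde{L};\Lambda^j F)\ra\HC{i-1}(\widetilde{L^a};\Lambda^j F^a),
\]
and by induction on $|A|$ (with $a$ dependent, so $\rk L_a=\rk L$ and $\rk L^a=\rk L-1$) both outer terms vanish unless $i+j=\rk L$. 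This forces the middle term to vanish outside $i+j=\rk L$ with no analysis of connecting maps whatsoever --- the splitting worry you flag simply does not arise. The single surviving dimension is then read off from the graded Euler characteristic $\chi_q\HC{*}(\widetilde{L};\Lambda^\blob F)=\chi_L(1+q)$ (Proposition~\ref{exterior:eulerchar:cellular}), whose Taylor coefficient at $q^j$ is $\tfrac{1}{j!}\chi_L^{(j)}(1)$; this is cleaner than differentiating the deletion--restriction relation. Finally, the passage from cellular $\HC{*}(\widetilde{L};-)$ back to sheaf $\HS{*}(L\setminus\0;-)$ is exactly Proposition~\ref{section3:fiddlingwith0:sheaf}, which produces the degree shift by one and the extra $\binom{\rk L}{j}=\dim\Lambda^j F(\0)$ term in degree zero.
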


The case $j=1$ reproduces the main result of \cite{EverittTurner19a},
and the appearance there of the
$\beta$-invariant of the arrangement
is expanded to the appearance of higher derivatives of the
characteristic polynomial that are
related to the dimensions of the higher exterior powers. The graded
Euler characteristic of this (bi-graded) homology is (see Corollary
\ref{exterior:eulerchar:sheaf}) 
$$
 \chi_q\HS * (L\kern-1pt\setminus \kern-1pt
  \0;\Lambda^\blob F) =
- \chi_L(1+q)+(1+q)^{\dim V}
$$
The homology $\HS * (L\kern-1pt\setminus \kern-1pt
\0;\Lambda^\blob F)$ can thus be interpreted as a categorification
of the characteristic polynomial of the hyperplane arrangement,
although we do not pursue this point of view.  We extend the results
above to non-essential arrangements in Theorem
\ref{exterior:sheaf-non-essential}.  
  
Our main computational tool is given by what we call \emph{Boolean
  covers\/}. These are Boolean lattices that keep track of all the
expressions of elements as joins of atoms. As lattices they are
particularly amenable to having their homology computed
\emph{cellularly\/} -- a philosophy that we adopted in
\cite{EverittTurner15}. We then 
make the connection betwen this cellular 
homology of Boolean covers and the sheaf homology of the lattices
being covered. 

This is a two step process. Writing $\widetilde{L}$ for the Boolean
cover of $L$, a number
of spectral sequence arguments establish: 

\begin{theoremthree}
\label{thm3}
  Let $L$ be a graded atomic lattice with sheaf $F$ and
  let $f:\widetilde{L}\ra  L$ be its Boolean cover. Then
$$
\HS{*}(L\setminus\0;F)
\cong
\HS{*}(\widetilde{L}\setminus\0;F).
$$  
\end{theoremthree}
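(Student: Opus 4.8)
The plan is to treat $f\colon\widetilde L\ra L$ as a covering with combinatorially trivial fibres and to transport sheaf homology across it by a Leray‑type spectral sequence. Recall that $f$ is an order‑preserving surjection and that, by construction of the Boolean cover, for each $x\in L$ the comma fibre
$$
f/x\;=\;\{\,\tilde y\in\widetilde L\ :\ f(\tilde y)\le x\,\}
$$
has a greatest element $\widehat x$ with $f(\widehat x)=x$: it records the join of \emph{all} atoms below $x$, and every other expression of $x$ as a join of atoms names an element of $f/x$ lying below $\widehat x$. Since $f^{-1}(\0)=\{\0\}$, the map $f$ restricts to an order‑preserving surjection $\widetilde L\setminus\0\ra L\setminus\0$; for $x\neq\0$ the comma fibre taken inside $\widetilde L\setminus\0$ is again $f/x$, still with greatest element $\widehat x$, and now $\widehat x\neq\0$ because $f(\widehat x)=x\neq\0$.

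Next I would realise the sheaf homology of a poset $P$ with sheaf $G$ as the left derived functors $L_{*}\varinjlim_{P}$ of the colimit, applied to $G$. For $f\colon\widetilde L\setminus\0\ra L\setminus\0$ there is a factorisation $\varinjlim_{L\setminus\0}\circ\,\mathrm{Lan}_{f}=\varinjlim_{\widetilde L\setminus\0}$ through the left Kan extension $\mathrm{Lan}_{f}$ along $f$, and $\mathrm{Lan}_{f}$ carries $\varinjlim$‑acyclic sheaves to $\varinjlim$‑acyclic sheaves (its right adjoint $f^{*}$ is exact); the resulting Grothendieck spectral sequence has the form
$$
E^2_{p,q}\;=\;\HS{p}\bigl(L\setminus\0;\,\mathscr H_q\bigr)\ \Longrightarrow\ \HS{p+q}\bigl(\widetilde L\setminus\0;\,f^{*}F\bigr),
$$
where $\mathscr H_q$ is the $q$‑th higher direct image: the sheaf on $L\setminus\0$ with stalk $\HS{q}\bigl(f/x;\,f^{*}F|_{f/x}\bigr)$ at $x$ and structure maps induced by the inclusions $f/x\hookrightarrow f/x'$ for $x\le x'$. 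Equivalently this spectral sequence can be assembled by hand, by filtering the chain complex computing $\HS{*}(\widetilde L\setminus\0;f^{*}F)$ according to the length of the $f$‑image of each chain of $\widetilde L\setminus\0$; it is in unwinding that filtration that the advertised collection of auxiliary spectral sequences appears.

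The spectral sequence then collapses. Because $f/x$ has a greatest element $\widehat x$, the functor $\varinjlim_{f/x}$ is naturally isomorphic to ``evaluation at $\widehat x$'' and hence is exact, so $\mathscr H_q=0$ for $q>0$ while $(\mathscr H_0)_x=(f^{*}F)_{\widehat x}=F_{f(\widehat x)}=F_x$. Chasing the inclusions $f/x\hookrightarrow f/x'$ — each sends $\widehat x$ to an element $\le\widehat{x'}$, over which the cocone map is $F_x\ra F_{x'}$ — identifies $\mathscr H_0$ with the sheaf $F$ on $L\setminus\0$. Hence $E^2$ is concentrated in the single row $q=0$, the spectral sequence degenerates, and its edge homomorphism is the claimed isomorphism $\HS{*}(L\setminus\0;F)\cong\HS{*}(\widetilde L\setminus\0;F)$.

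The main obstacle is not this collapse — geometrically the fibres of $f$ are as trivial as can be, having a maximum element, so that Quillen's Theorem~A applies with room to spare — but the homological bookkeeping that precedes it: fixing the conventions so that sheaf homology really is $L_{*}\varinjlim$ (equivalently, choosing the poset topology that corresponds to homology rather than cohomology, so that the correct fibre over $x$ is $f/x=\{\tilde y:f(\tilde y)\le x\}$, which has a top, and not $\{\tilde y:f(\tilde y)\ge x\}$); verifying that $\mathrm{Lan}_{f}$ preserves $\varinjlim$‑acyclicity, so the Grothendieck spectral sequence is legitimate and its higher direct images are genuinely computed stalkwise as $\HS{*}(f/x;-)$; and, for the hands‑on route, showing that the filtration‑quotient complexes are exactly these fibre complexes. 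With that scaffolding in place the argument above goes through for any graded atomic lattice and any sheaf on it.
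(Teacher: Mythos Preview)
Your spectral sequence is set up with the wrong fibre. In the paper's conventions a sheaf is a \emph{contravariant} functor $F\colon P\to\rmod$, and $\HS{*}(P;F)=L_*\varinjlim F$ is the derived colimit of $F$ viewed as a covariant functor on $P^{op}$. The Grothendieck factorisation is therefore through $\mathrm{Lan}_{f^{op}}$, and the pointwise Kan formula gives
\[
(L_q\,\mathrm{Lan}_{f^{op}}F)(x)\;=\;\HS{q}\bigl(f^{-1}(L_{\ge x});\,f^*F\bigr),
\]
i.e.\ the fibre is $\{\tilde y:f(\tilde y)\ge x\}$, exactly as in the paper's Theorem~\ref{theorem:lerayserregrothendieck}, and \emph{not} $f/x=\{\tilde y:f(\tilde y)\le x\}$. (A sanity check: with your $f/x$, the inclusions $f/x\hookrightarrow f/x'$ for $x\le x'$ would make $\mathscr H_q$ \emph{covariant}, not a sheaf.) Your collapse argument then also fails: for a contravariant sheaf $G$ on a poset with greatest element $\widehat x$, the colimit is \emph{not} evaluation at $\widehat x$ --- that statement holds for the minimum, which is Lemma~\ref{lemma:basicHSproperties}(3). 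The correct fibres $f^{-1}(L_{\ge x})$ do have a maximum (namely the full set of atoms) but no minimum in general, and their higher sheaf homology does not vanish for trivial reasons.

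The paper sidesteps this by running the Leray spectral sequence in the \emph{opposite} direction: one covers $L\setminus\0$ by the upper-convex pieces $L_{\ge a}$, and the nerve of this cover is $\widetilde L\setminus\0$. The relevant map is then $L\setminus\0\to\widetilde L\setminus\0$, and its fibres over $X\in\widetilde L\setminus\0$ are $\bigcap_{a\in X}L_{\ge a}=L_{\ge\bigvee X}$, each of which has a \emph{minimum}. Now Lemma~\ref{lemma:basicHSproperties}(3) applies directly, $\mathscr H_q=0$ for $q>0$, $\mathscr H_0=F$, and the sequence collapses. So the fix is not a matter of bookkeeping: you need to reverse the direction of the map to land on fibres with minima rather than maxima.
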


This result also appears in \cite{Lusztig74}*{\S1.2}. The second step is:

\begin{theoremfour}
\label{thm4}
If $B$ is a Boolean lattice and $F$ is a sheaf on $B$ then
$$
\HS{*}(B\setminus\0;F)
\cong
\HC{*}(B\setminus\0;F).
$$  
\end{theoremfour}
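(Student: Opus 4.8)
The statement to prove is that for a Boolean lattice $B$ and a sheaf $F$ on $B$, sheaf homology and cellular homology of $B\setminus\0$ agree. My plan is to exploit the fact that $B\setminus\0$ is, combinatorially, the face poset of a simplex: if $B$ is the Boolean lattice on an $n$-element atom set, then $B\setminus\0$ is the poset of nonempty subsets of an $n$-set, i.e. the barycentric subdivision of the boundary complex $\partial\Delta^{n-1}$ together with the top cell. Both homology theories are computed from chain complexes indexed by the elements of $B\setminus\0$, graded by rank, so the natural approach is to write down an explicit isomorphism of chain complexes, degree by degree, and check it commutes with the differentials. The sheaf-homology differential is the usual simplicial-style alternating sum of corestriction maps along the covering relations of the poset (using the structure maps of $F$), while the cellular differential should be defined using incidence numbers of cells together with the sheaf data.

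**Key steps.**
First I would recall (or set up) the precise definition of cellular homology $\HC{*}(B\setminus\0;F)$ being used in this paper — presumably the one from \cite{EverittTurner15} — identifying the $k$-cells of $B\setminus\0$ with the rank-$(k{+}1)$ elements of $B$ (equivalently $(k{+}1)$-subsets of the atom set, which correspond to $k$-faces of $\Delta^{n-1}$), and the chain group in degree $k$ as $\bigoplus_{\rk x = k+1} F_x$. Second, I would identify the sheaf-homology chain complex: since $B$ is a lattice where every element is a join of atoms and all intervals are themselves Boolean, the sheaf cochain/chain complex $\HS{*}(B\setminus\0;F)$ is computed by the same graded pieces $\bigoplus_{\rk x = k+1} F_x$, with differential $\partial = \sum \pm\, \rho_{x,y}$ summing the sheaf corestriction maps $F_x \to F_y$ over all $y \lessdot x$. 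Third — the heart of the matter — I would show the two differentials coincide up to sign: the incidence number between a $k$-cell indexed by $x$ and a $(k{-}1)$-cell indexed by $y$ is $\pm 1$ exactly when $y \lessdot x$ (a codimension-one face of a simplex), and zero otherwise, so the only content is matching the sign conventions. One does this by fixing an ordering of the atoms, so that each $x$ corresponds to an ordered subset $\{a_{i_0} < \cdots < a_{i_k}\}$, and the sign on the $F_x \to F_y$ component (where $y$ omits the $j$-th atom) is $(-1)^j$ — exactly the simplicial face-map sign. Finally I would verify that $\partial^2 = 0$ follows from the standard simplicial identity combined with the sheaf compatibility $\rho_{y,z}\circ\rho_{x,y} = \rho_{x,z}$ on the poset, and that the chain isomorphism is the identity on each summand, hence induces the claimed isomorphism on homology.

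**Main obstacle.**
The genuine content is not the combinatorics of the simplex — that is classical — but rather checking that the cellular complex of \cite{EverittTurner15}, which is built from an abstract CW-type structure on $|B\setminus\0|$ with its own choice of attaching maps and orientations, really does reduce to the poset/simplicial complex with the expected incidence numbers when the lattice is Boolean. In other words, the hard part will be reconciling two a priori different bookkeeping schemes (the ``cellular'' one, phrased via cells and orientations, and the ``sheaf/poset'' one, phrased via covering relations and corestrictions) and showing the orientation data can be chosen compatibly so that the incidence coefficients are exactly $\pm 1$ with signs given by a global atom-ordering. Once the dictionary is fixed, everything else is the routine observation that a simplex boundary has a unique (up to sign) chain structure and that $F$'s structure maps turn both differentials into the same map. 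I expect the argument to be short modulo invoking the setup of the earlier paper, so I would keep the proof at the level of ``the two chain complexes are literally equal after this identification of generators and signs,'' deferring the detailed sign-chase to a remark or to the cited reference.
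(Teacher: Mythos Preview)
There is a genuine gap in your proposal: you have misidentified the sheaf homology chain complex. You write that ``the sheaf cochain/chain complex $\HS{*}(B\setminus\0;F)$ is computed by the same graded pieces $\bigoplus_{\rk x = k+1} F_x$'', but this is not how sheaf homology is defined in the paper. The complex $S_*(P;F)$ (or its normalised version $T_*(P;F)$) computing $\HS{*}(P;F)$ has $n$-chains
\[
T_n(P;F) = \bigoplus_{x_n < \cdots < x_0} F(x_0),
\]
the sum being over all length-$n$ strict chains in $P$. This is indexed by \emph{flags}, not by single elements graded by rank, and it is vastly larger than the cellular complex $C_k = \bigoplus_{\rk x = k} F(x)$. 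The whole content of the theorem is that these two complexes --- one large, one small --- nonetheless have isomorphic homology. Your argument, by taking the sheaf complex to already be the small one, essentially assumes the conclusion.

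The paper's proof proceeds quite differently: it filters the sheaf complex $S_*(B\setminus\0;F)$ by the subcomplexes $S_*(L_p;F)$ where $L_p$ consists of elements of rank at most $p$, and runs the associated spectral sequence. The $E^1$-page is identified with $\bigoplus_{\rk_L(x)=p}\widetilde{H}_{*-1}(L_{<x};\Delta F(x))$. The crucial geometric input --- which your sketch never touches --- is that for $B$ Boolean the open interval $L_{<x}$ is a Boolean lattice minus its top and bottom, so its order complex is a sphere of dimension $\rk_L(x)-1$. This forces the $E^1$-page to collapse onto the $q=0$ line, where one recovers exactly the cellular complex with its differential. Without this sphere computation (or something equivalent to it), there is no way to pass from the flag-indexed complex to the element-indexed one.
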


If $L$ is an intersection lattice, then for a hyperplane $a$ the
deletion $L_a$ and restriction $L^a$ are lattices of ``smaller''
arrangements -- see \S\ref{section1:posets}. 
The characteristic polynomial of $L$ satisfies a
deletion-restriction relation in terms of $L_a$ and $L^a$,
and our main technical tool is a lift of
this to the setting of the cellular homology of Boolean covers.  

\begin{theoremseven}
\label{thm7}
Let $L$ be a geometric lattice equipped with a sheaf $F$ and let $f:\widetilde{L}\ra
L$ be its Boolean cover. Then for any atom $a\in L$ there is a long exact sequence
$$
\cdots
\ra
\HC{i}(\widetilde{L^a};F)
\ra
\HC{i}(\widetilde{L_a};F)
\ra
\HC{i}(\widetilde{L};F)
\ra
\HC{i-1}(\widetilde{L^a};F)
\ra
\HC{i-1}(\widetilde{L_a};F)
\ra
\cdots
$$  
\end{theoremseven}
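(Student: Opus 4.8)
The plan is to realise the long exact sequence as the homology long exact sequence of a short exact sequence of chain complexes computing the three cellular homologies. First I would recall how the cellular chain complex $\HC{*}(\widetilde{L};F)$ is built: since $\widetilde{L}$ is a Boolean cover, its cells (the ``intervals'' or ``faces'' as in \cite{EverittTurner15}) are indexed by subsets $S$ of the atom set together with a choice of expression as a join, and the chain group in degree $i$ is a direct sum of copies of the stalks $F_x$ over the appropriate faces of dimension $i$. The key combinatorial observation is that, for a fixed atom $a$, the faces of $\widetilde{L}$ split into those whose underlying atom set does \emph{not} contain $a$ — these are exactly the faces of $\widetilde{L_a}$, the Boolean cover of the deletion — and those whose underlying set \emph{does} contain $a$. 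So at the level of chain groups there is a short exact sequence
$$
0\ra C_*(\widetilde{L_a};F)\ra C_*(\widetilde{L};F)\ra Q_*\ra 0,
$$
where $C_*(\widetilde{L_a};F)$ sits inside as a subcomplex (not a quotient) precisely because deleting $a$ means deleting the faces that contain it, and these form a subcomplex — i.e. the boundary of a face containing $a$ may or may not contain $a$, but the faces \emph{not} containing $a$ are closed under taking boundaries. I would need to double-check the variance here against the conventions of \S\ref{section1:posets}; it is possible the roles are reversed and $\widetilde{L_a}$ appears as the quotient, in which case the sequence is the other way around but the argument is identical up to relabelling.

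The heart of the proof is then the identification of the quotient complex $Q_*$ with (a shift of) the cellular chain complex of $\widetilde{L^a}$, the Boolean cover of the restriction. Here I would use the order-theoretic description of the restriction $L^a=\{x\in L : x\geq a\}$ as an interval, hence again a geometric lattice, and the fact — standard in deletion-restriction arguments, going back to the analogous statement for the characteristic polynomial — that the atoms of $L^a$ correspond to the rank-two elements of $L$ above $a$, equivalently to the atoms $b\neq a$ of $L$ modulo the relation $b\sim b'$ iff $a\vee b=a\vee b'$. The faces of $\widetilde{L}$ containing $a$ should biject, after quotienting out the distinguished atom $a$, with the faces of $\widetilde{L^a}$; the degree shift by one accounts for the removal of $a$ from every such face, which is exactly what produces the index shift $\HC{i}(\widetilde{L^a})\ra\HC{i-1}(\widetilde{L^a})$ appearing in the stated sequence. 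One also needs the coefficient sheaves to match up: the stalk of $F$ at a face of $\widetilde{L^a}$ lying over $x\geq a$ is $F_x$, and this is forced to agree with what the quotient complex sees. I expect this compatibility of sheaf data across the cover to be the most delicate bookkeeping.

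Once the short exact sequence $0\ra C_*(\widetilde{L_a};F)\ra C_*(\widetilde{L};F)\ra C_{*-1}(\widetilde{L^a};F)\ra 0$ is established, the long exact sequence in homology is immediate from the zig-zag lemma, and the connecting homomorphism $\HC{i}(\widetilde{L};F)\ra\HC{i-1}(\widetilde{L^a};F)$ is the one in the statement. The main obstacle, as indicated, is not any deep idea but rather getting the face-indexing and the sheaf stalks to line up correctly — in particular verifying that the boundary maps of $\widetilde{L}$ restrict and descend correctly (so that $C_*(\widetilde{L_a};F)$ really is a subcomplex and the quotient really is $C_{*-1}(\widetilde{L^a};F)$ \emph{as a chain complex}, with the right differential), and checking that the whole construction is natural enough that the sign conventions in the cellular differential of \cite{EverittTurner15} do not introduce an obstruction. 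A secondary point to verify is that $\widetilde{L_a}$ and $\widetilde{L^a}$ are indeed the Boolean covers of $L_a$ and $L^a$ in the sense defined earlier, i.e. that forming the Boolean cover commutes with deletion and restriction; this is where the hypothesis that $L$ be \emph{geometric} (rather than merely graded atomic) is used, since that is what guarantees $L_a$ and $L^a$ are again geometric and that their atoms behave as described.
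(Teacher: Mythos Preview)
Your overall strategy is right: the short exact sequence
\[
0\ra C_*(B_a;F)\ra C_*(\widetilde{L};F)\ra C_{*-1}(B^a;F)\ra 0
\]
(with $B=\widetilde{L}$) is exactly what the paper uses, and it is indeed the instance of (\ref{eq:6}) with $x=A\setminus\{a\}$. The identification $\HC{i}(B_a;F)=\HC{i}(\widetilde{L_a};F)$ is straightforward, since forming the Boolean cover does commute with deletion.

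The gap is in your treatment of the restriction. You assert that the faces of $\widetilde{L}$ containing $a$ biject, after removing $a$, with the faces of $\widetilde{L^a}$, and you flag ``forming the Boolean cover commutes with restriction'' as a secondary bookkeeping point. In fact this commutativity \emph{fails}. The sub-Boolean $B^a=(\widetilde{L})^a$ has atoms $\{a\vee_B b:b\in A\setminus\{a\}\}$, one for each $b\neq a$; the Boolean cover $\widetilde{L^a}$ of the restriction has atoms indexed by the \emph{distinct} elements $a\vee_L b$ of $L$. Whenever two atoms $b,b'$ satisfy $a\vee_L b=a\vee_L b'$ (which certainly happens in non-Boolean geometric lattices), $\widetilde{L^a}$ has strictly smaller rank than $B^a$, and no bijection of faces is possible. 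So the quotient complex in your sequence is $C_{*-1}(B^a;F)$, not $C_{*-1}(\widetilde{L^a};F)$.

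What is missing is an argument that $\HC{*}(B^a;F)\cong\HC{*}(\widetilde{L^a};F)$ despite the two Booleans being different. This is the content of the paper's Theorem \ref{section3:theorem:deletion.restriction.auxilary}, and it is not merely bookkeeping: one shows that if $s=a\vee_B b$ and $s'=a\vee_B b'$ are distinct atoms of $B^a$ with the same image in $L^a$, then the restriction $(B^a)^s$ is a \emph{double} with respect to the atom $s\vee_B s'$, so by Proposition \ref{section3:prop:doubles} deleting $s$ from $B^a$ leaves the homology unchanged. Iterating these deletions collapses $B^a$ down to $\widetilde{L^a}$. This step is where the real work lies, and it is absent from your plan.
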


This allows us to prove the analogue of Theorem
\ref{exterior:sheaf-essential} for the cellular homology of Boolean
covers:

\begin{theoremeight}
\label{thm8}
Let $L$ be the intersection lattice of an essential hyperplane
arrangement in a space $V$, let
$F$ be the natural sheaf on $L$ and $\Lambda^{j}F$ be the
$j$-th exterior power of $F$.
If $\rk(L)\geq 2$ and $\widetilde{L}\ra L$ is the Boolean cover of $L$, then
$\HC{i}(\widetilde{L};\Lambda^{j}F)$ is trivial unless $0\leq i<\rk(L)$
and 
$i+j=\rk(L)=\dim V$, in which case:
$$
\dim\HC{i}(\widetilde{L};\Lambda^{j}F)=
\frac{{(-1)^i}}{j!}
\chiprimek j
$$
where  $\chi^{({j})}_{L}(t)$ is the $j$-th derivative of the
characteristic polynomial of $L$.   
\end{theoremeight}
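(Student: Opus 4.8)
The plan is to prove Theorem~\ref{exterior:cellular:essential} by induction on the pair $(\rk L,n)$ in lexicographic order, where $n$ is the number of hyperplanes of the arrangement, the engine being the deletion--restriction long exact sequence of Theorem~\ref{section3:theorem:deletion.restriction} applied to the sheaf $\Lambda^jF$ on the geometric lattice $L$. The cases $j=0$ and $j>\rk L$ are immediate ($\Lambda^0F$ is the constant sheaf $\underline k$, whose reduced cellular homology vanishes, and $\Lambda^jF=0$ when $j>\rk L$), so throughout one may assume $1\le j\le\rk L$. For the base of the induction, when $n=\rk L$ the matroid of $\AA$ is free, $L$ is the Boolean lattice $B_{\rk L}$, and $\widetilde L=L$; this also settles the small-rank cases, and in particular the rank-$1$ case. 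Here I would compute $\HC{*}(\widetilde L;\Lambda^jF)$ directly from the small explicit cellular chain complex of the associated cube --- the stalk of $\Lambda^jF$ at a rank-$k$ flat has dimension $\binom kj$, and the complex is of twisted Koszul type --- obtaining vanishing in all degrees except $\HC{0}(B_{\rk L};\Lambda^{\rk L}F)\cong k$, in agreement with $\chi_{B_{\rk L}}(t)=(t-1)^{\rk L}$ (whose $j$-th derivative vanishes at $t=1$ for $j<\rk L$ and equals $(\rk L)!$ for $j=\rk L$).

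For the inductive step, let $r=\rk L\ge 2$ and $n>r$. Since the matroid is then not free, some hyperplane $a$ is not a coloop, so the deletion $\AA_a$ is again essential, of rank $r$ with $n-1$ hyperplanes; by the (lexicographically smaller) inductive hypothesis, $\HC{*}(\widetilde{L_a};\Lambda^jF)$ is concentrated in degree $r-j$ with dimension $\tfrac{(-1)^{r-j}}{j!}\chi^{(j)}_{L_a}(1)$, the natural sheaf of $\AA_a$ being literally the restriction of $F$. The restriction $\AA^a$ is essential of rank $r-1$ inside the hyperplane $a$ (its hyperplanes meet in $(\bigcap_{H\ne a}H)\cap a\subseteq\bigcap_HH=0$), but here there is a catch: under $L^a\cong L(\AA^a)$ the sheaf $\Lambda^jF$ restricts not to the intrinsic exterior power $\Lambda^jF^a$ of the natural sheaf $F^a$ of $\AA^a$, but to a sheaf carrying one extra trivial direction coming from $V/a\cong k$. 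Concretely, from the exact sequences $0\to(V/a)^\ast\to(V/X)^\ast\to(a/X)^\ast\to 0$ for $X\le a$ one gets a short exact sequence of sheaves on $L^a$,
$$
0\ra\Lambda^{j-1}F^a\ra\Lambda^jF|_{L^a}\ra\Lambda^jF^a\ra 0 ,
$$
and applying the rank-$(r-1)$ case of the theorem to the two outer terms --- concentrated in degrees $r-j$ and $r-j-1$ respectively --- and chasing the resulting long exact sequence shows that $\HC{*}(\widetilde{L^a};\Lambda^jF|_{L^a})$ is supported in the degrees $r-j$ and $r-j-1$.

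I would then feed all of this into the deletion--restriction long exact sequence
$$
\cdots\ra\HC{i}(\widetilde{L^a};\Lambda^jF)\ra\HC{i}(\widetilde{L_a};\Lambda^jF)\ra\HC{i}(\widetilde L;\Lambda^jF)\ra\HC{i-1}(\widetilde{L^a};\Lambda^jF)\ra\cdots .
$$
Since the middle family is nonzero only in degree $r-j$ and the left family only in degrees $r-j$ and $r-j-1$, a short diagram chase (granting the degeneracy discussed below) forces $\HC{i}(\widetilde L;\Lambda^jF)=0$ for $i\ne r-j$ (and $0\le r-j<r$ as $1\le j\le r$), and exhibits $\HC{r-j}(\widetilde L;\Lambda^jF)$ as an extension assembled from the groups $\HC{r-j}(\widetilde{L_a};\Lambda^jF)$, $\HC{r-j}(\widetilde{L^a};\Lambda^jF)$ and $\HC{r-j-1}(\widetilde{L^a};\Lambda^jF)$. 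Reading off the dimension and simplifying via the classical deletion--restriction relation $\chi_L(t)=\chi_{L_a}(t)-\chi_{L^a}(t)$, differentiated $j$ times and evaluated at $t=1$, then gives $\dim\HC{r-j}(\widetilde L;\Lambda^jF)=\tfrac{(-1)^{r-j}}{j!}\chi^{(j)}_L(1)$, which closes the induction.

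The hard part will be twofold. First, I need the interaction of Boolean covers with restriction to be as expected --- that $\widetilde{L^a}$ is the Boolean cover of $L(\AA^a)$ and that $\Lambda^jF$ pulls back along $f$ correctly --- together with the bookkeeping for the non-intrinsic sheaf $\Lambda^jF|_{L^a}$ through the displayed short exact sequence; this is what lets the rank-$(r-1)$ hypothesis act on $L^a$ even though $\AA^a$ does not carry the ``right'' sheaf on the nose. Second, and more seriously, I must show that the connecting homomorphisms in the deletion--restriction sequence degenerate just enough that $\HC{*}(\widetilde L;\Lambda^jF)$ genuinely concentrates in the single degree $r-j$ and that the extension there has exactly the predicted dimension: a bare Euler-characteristic count determines only the alternating sum $\sum_i(-1)^i\dim\HC{i}(\widetilde L;\Lambda^jF)$, so the concentration statement has to be proved directly from the degree bounds coming out of the two inductive hypotheses (and the short exact sequence on $L^a$), and that is where the real content of the argument lies.
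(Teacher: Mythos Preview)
Your overall architecture --- induction on the number of hyperplanes, a separate Boolean base case, picking a dependent atom $a$ so that $L_a$ stays essential of the same rank, then feeding the inductive information through the deletion--restriction long exact sequence and reading off the surviving dimension via the Euler characteristic --- is exactly the paper's route. The one substantive error is your identification of the restricted sheaf on $L^a$.

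In this paper the natural sheaf is the tautological one, $F(x)=x$ with structure maps the inclusions $y\hookrightarrow x$; it is \emph{not} $(V/x)^\ast$. For $x\in L^a=L_{\geq a}$ one has $x\subseteq a\subseteq V$, so $F(x)=x=F^a(x)$ literally, and hence $\Lambda^jF|_{L^a}=\Lambda^jF^a$ on the nose. There is no extra line from $V/a$, no short exact sequence $0\to\Lambda^{j-1}F^a\to\Lambda^jF|_{L^a}\to\Lambda^jF^a\to 0$ to analyse, and consequently no degeneracy of connecting maps to worry about. With the correct sheaf, the inductive hypothesis gives $\HC{i}(\widetilde{L_a};\Lambda^jF_a)$ nonzero only for $i=r-j$ and $\HC{i-1}(\widetilde{L^a};\Lambda^jF^a)$ nonzero only for $i-1=(r-1)-j$, i.e.\ again $i=r-j$; the long exact sequence then forces $\HC{i}(\widetilde L;\Lambda^jF)=0$ for $i\neq r-j$ immediately, and the dimension in the sole surviving degree is then pure Euler characteristic (the paper uses Proposition~\ref{exterior:eulerchar:cellular}: the coefficient of $q^j$ in $\chi_L(1+q)$ is $\tfrac{1}{j!}\chi_L^{(j)}(1)$). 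So once you correct the sheaf, both ``hard parts'' you flag simply disappear, and your argument collapses to the paper's.
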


Indeed this is proved first, and Theorem
\ref{exterior:sheaf-essential} is a corollary. It is extended to
non-essential sheaves in Theorem
\ref{exterior:cellular-non-essential}. 

The theorems above, indeed all the results of this paper, hold
  for lattices in a range of generalities. The broadest class -- for
  example in
  Theorem 3 --
  are the graded atomic lattices. The proof of the long exact sequence
  in Theorem 7 requires the restriction $L^a$ to also be
  graded atomic; to ensure this we restrict to the smaller class of
  geometric lattices. Specific computations of homology, such as
  Theorems 8 and 9, are done for the natural sheaf
  on the further restricted class of arrangement lattices. Finally,
  for our cellular calculations we restrict yet further to the Boolean
  lattices, although this is purely for conciseness and convenience --
  an analogous result to Theorem 4 holds for the class of
  cellular posets; see \cite{EverittTurner15}*{Theorem 2}.

Working with the Boolean cover takes us quite close to the perspective
of Dansco and Licata \cite{DanscoLicata15}. Motivated by Khovanov
homology-style constructions, they make a number of decorated
hypercubes (some using exterior powers) which give rise to homologies
which categorify the characteristic polynomial, among other things, of
a hyperplane arrangement. Our cellular homology of the Boolean cover
is very much of this type, but in fact the resulting decorated
hypercube is not one they consider. They initiate some computations of
the homology for their examples and it would be interesting to see
further (or full) computations. The techniques we develop for Boolean
covers may be of some use in this regard. 

The structure of the paper is as follows. In \S 1 we discuss the
basics of lattices, arrangements and sheaves. We recall the necessary
background on hyperplane arrangements and their intersection lattices,
sheaves on lattices, and characteristic polynomials. We also introduce
Boolean covers. In \S 2 we move to homology,
first discussing sheaf homology and its basic properties and
calculating the Euler characteristic in the example of interest. We then
discuss a Leray-Serre type spectral sequence needed to make the
connection between a lattice and its Boolean cover. In \S 3 we
introduce the cellular homology of a Boolean lattice with coefficients
in a sheaf. We show that cellular homology computes sheaf homology and
give a number of technical results about cellular homology, of which
the most important is the deletion-restriction long exact
sequence. \S 4 studies the main example of the homology of an
arrangement lattice with coefficients in the exterior powers of the
natural sheaf. After a brief discussion of graded Euler
characteristics, we state and prove our main results first for
essential arrangements and then in the non-essential case.


\section{Lattices, arrangements and sheaves}

This section summarises the basics of posets, lattices and sheaves.
\S\ref{section1:posets} presents basic poset notions and
terminology along with the examples that preoccupy this paper:
the intersection lattices of hyperplane
arrangements. \S\ref{section1:sheaves} gives basic sheaf notions
and constructions and the principal examples: the natural sheaf of a
hyperplane arrangement and its exterior
powers. \S\ref{section1:characteristicpolynomial} recalls the
characteristic polynomial and finally \S\ref{section1:booleancovers}
introduces a key tool in the computation of sheaf homology: the
Boolean cover of a graded 
atomic lattice. 


\subsection{Posets, lattices and arrangements}
\label{section1:posets}

Let $P=(P,\leq) $ be a finite graded poset with rank function
$\rk:P\ra\Z$ (see \cite{Stanley12}*{Chapter 3} for this and other basic poset
terminology in this section). 
A \emph{minimum\/} is an element $\0\in P$ with
$\0\leq x$ for all $x\in P$ 
and a \emph{maximum\/} is an element $\1\in P$ with $x\leq\1$ for all $x\in
P$. We assume $\rk(\0) = 0$.
The \emph{atoms\/} of $P$ are the elements of rank $1$. 
A
poset map $f:Q\ra P$ is a set map such that $fx\leq fy\in P$ if $x\leq
y\in Q$. 

A subset $K\subset P$ is \emph{upper convex\/} if $x\in K$ and $x\leq
y$ implies $y\in K$. If $x\leq y$, the \emph{interval\/} $[x,y]$ consists of those
$z\in P$ such that $ x\leq z \leq y$; if $x\in P$ the interval
$P_{\geq x}$ consists of those $z\in P$ such that $z\geq x$; one
defines $P_{\leq x}$, $P_{> x}$ and $P_{< x}$ similarly. 

A lattice is a poset equipped with a join $\vee$ and a
meet $\wedge$.  
A finite lattice has a minimum $\0$, equal to the meet of all
its elements, and a maximum $\1$, equal to the join.
A graded lattice is \emph{atomic\/} if
every element can be
expressed (not necessarily uniquely)
as a join of atoms, with the convention that the 
empty join is the minimum $\0$. 
The rank
$\rk(L)$ of a graded lattice $L$
is $\rk(L):=\rk(\1)$. 

If $A$ is a finite set then the \emph{Boolean lattice\/} $B=B(A)$
consists of the subsets of $A$ ordered by
inclusion. The result is a graded atomic lattice with
$\rk(x)=|x\kern0.25mm|$, join $x\vee y=x\cup
y$, meet $x\wedge y=x\cap y$, minimum $\0=\varnothing$, maximum $\1=A$
and atoms $A$. Any element has
a unique expression as a join of atoms.

\begin{figure}
\begin{tikzpicture}
\draw [white] (0,0)--(\textwidth,0); 
%
\node at (2,2)
{\includegraphics[scale=0.325]{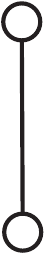}};
\node at (4.5,2)
{\includegraphics[scale=0.325]{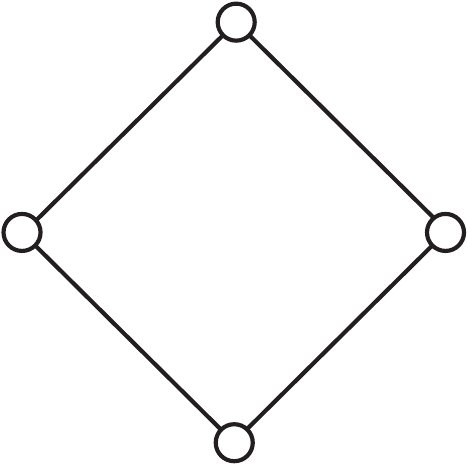}};
\node at (8,2)
{\includegraphics[scale=0.325]{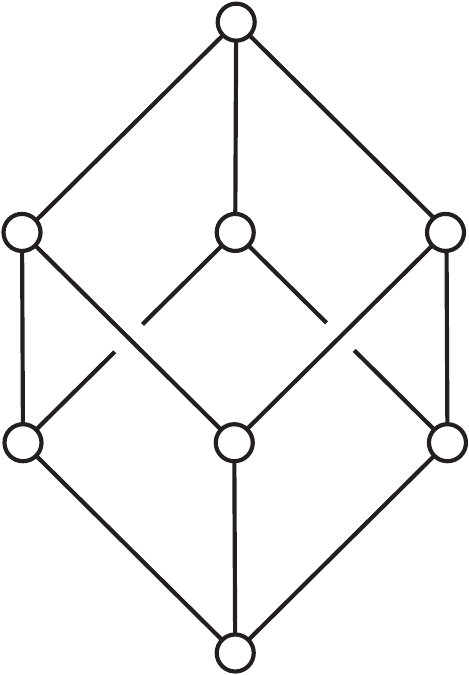}};
\node at (11.5,2)
{\includegraphics[scale=0.325]{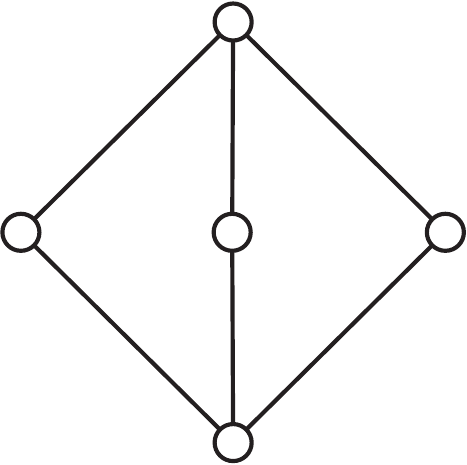}};
%
%
\end{tikzpicture}
\caption{The arrangement lattices $L(A)$ where $|A|\leq 3$.}
\label{fig:arrangement:small}
\end{figure}

This paper is about arrangement lattices. If $V$ is a finite
dimensional vector space over a field $k$, 
then an \emph{arrangement\/} in $V$ is a finite set
$A=\{a_i\}$ of linear hyperplanes, i.e. codimension one subspaces. 
The corresponding \emph{arrangement lattice\/} $L=L(A)$
has elements all possible intersections of hyperplanes
in $A$ -- with the empty intersection taken to be $V$ -- and ordered by
\emph{reverse\/} inclusion. Then $L$ is a graded atomic lattice with atoms
the hyperplanes $A$, rank function $\rk(x)=\codim x$, minimum $\0=V$,
maximum $\1=\bigcap_{a\in A} a$,
$$
x\vee y=x\cap y,\text{ and }x\wedge y=\bigcap z
$$
where the intersection on the right is indexed by the set $\{z\in L:x\cup y\subseteq z\}$. 
Moreover, $L$ is \emph{geometric\/}, in that the rank function
satisfies $\rk(x\vee y)+\rk(x\wedge y)\leq \rk(x)+\rk(y)$. An arrangement is
\emph{essential\/} when 
$\bigcap_{a\in A} a$ is the trivial subspace, or equivalently, $\rk(L)=\dim
V$. The arrangement lattices on at most three hyperplanes are
shown in Figure \ref{fig:arrangement:small}. The first three are
Boolean -- realised by arrangements of coordinate hyperplanes
with respect to a basis in $1,2$ or $3$-dimensions -- and the last is
a braid arrangement (see for instance \cite{MR2383131})
combinatorially isomorphic to the partition lattice $\Pi(3)$ 
of a set of size $3$. 

If $a\in A$ is a hyperplane of an arrangement in $V$, then the
\emph{deletion\/} arrangement in 
$V$ has hyperplanes $A\setminus\{a\}$. Its intersection lattice
$L_a$ consists of the elements of $L$ that can be expressed as a join
of the atoms $A\setminus\{a\}$. The \emph{restriction\/}
arrangement in $a$ has hyperplanes the subspaces $a\cap b$ for $b\in
A\setminus\{a\}$. Its intersection lattice $L^a$ is the interval
$L_{\geq a}=\{x\in L:x\geq a\}$.  

\begin{figure}
\begin{tikzpicture}
\draw [white] (0,0)--(\textwidth,0); 
%
\node at (7,3)
{\includegraphics[scale=0.5]{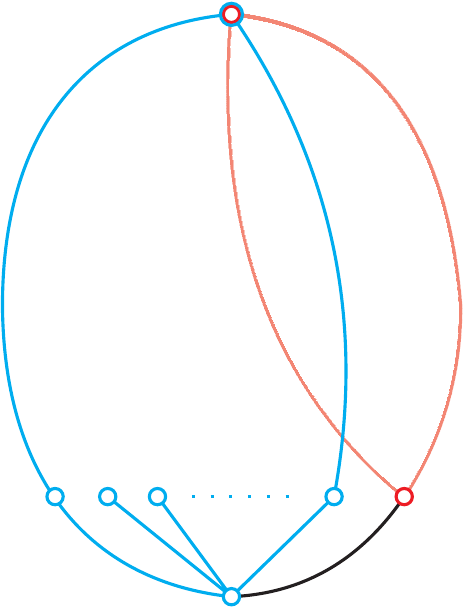}};
\node[color=red]  at(8.5,1.15){$a$};
\node[color=red]  at(8.5,3.5){$L^a$};
\node[color=cyan]  at(6.75,1.65){$A\setminus\{a\}$};
\node[color=cyan]  at(6.5,3){$L_a$};
\node at (7,0.25){$0$};
\node at (7,5.75){$1$};
%
%
\end{tikzpicture}
\caption{The decomposition of $L$ into the deletion $L_a$ and restriction
$L^a$ for a dependent atom $a$.}
\label{fig:arrangement:decomposition}
\end{figure}

In any graded atomic lattice, a set $S\subset A$ of atoms is
\emph{independent\/} if $\bigvee T<\bigvee S$ for all proper subsets
$T$ of $S$, and 
\emph{dependent\/} otherwise. An atom $a$ in a dependent set of atoms $S$ with
the property that $\bigvee S\setminus\{a\}=\bigvee S$ is called a
\emph{dependent atom\/}. A schematic of $L,L_a$ and $L^a$, when $a$ is
dependent, is shown in Figure \ref{fig:arrangement:decomposition}.
It is well known (see for instance
\cites{Birkhoff79,Everitt-Fountain13})
that the only graded atomic lattices without dependent atoms are the
Booleans. Moreover, in a geometric lattice $L$ we have $\rk(\bigvee S)\leq
|S\kern-1pt |$,
and $S$ is
independent if and only if $\rk(\bigvee S)=|S\kern-1pt |$.


\subsection{Sheaves on lattices}
\label{section1:sheaves}

A \emph{sheaf\/} on a poset $P$ is a \emph{contra\/}variant
functor   
$F:P\rightarrow\rmod$
to the category of 
$R$-modules, 
where
$R$ is a commutative ring with $1$, and $P$ is interpreted as a
category in the usual way (having a unique morphism $x\ra y$ whenever $x\leq y$). 
A \emph{morphism\/} of sheaves is a natural
transformation of functors $\kappa:F\rightarrow G$ and an isomorphism
is a natural isomorphism. We write 
$F^y_x$ for the \emph{structure map\/} of the sheaf given by
$F(x\leq y):F(y)\ra F(x)$.

For example, if
$M\in\rmod$ is fixed, then the \emph{constant\/} sheaf $\Delta M$  
has $\Delta M(x)=M$ for every $x\in P$ and 
$(\Delta M)^y_x=id:M\rightarrow M$ for every $x\leq y$ in $P$.

Many sheaf constructions can be done locally, or ``pointwise''. For
example, the direct sum $F\oplus G$ of sheaves $F$ and $G$ has
$(F\oplus G)(x)=F(x)\oplus G(x)$ and structure maps $F_x^y\oplus
G_x^y$ when $x\leq y$. The tensor product $F\otimes G$ can be formed in an analogous way. 
An \emph{($\N$-) graded\/} sheaf $F^\blob$ is a direct sum
$\bigoplus_{i\geq 0}F_i$ of sheaves $F_i$. 

If $Z:\rmod\rightarrow\rmod$ is a functor then we write
$ZF$ for the sheaf arising from the composite
$Z\circ F\colon P\rightarrow\rmod\rightarrow\rmod$. For example, if $F$ is a sheaf
and $j\geq 0$, we have the exterior powers $\Lambda^j F$
of $F$, and hence the graded sheaf:
$$
\Lambda^{\blob} F=\bigoplus_{j\geq 0}\Lambda^jF.
$$
It is easy to check that $\Lambda^{j}\Delta M=\Delta\Lambda^j M$, and that the standard module result:
$$
\Lambda^j(F\oplus G)\cong \bigoplus_{s+t=j}\Lambda^tF\otimes \Lambda^sG
$$
carries straight through to sheaves of modules.


\subsection{The characteristic polynomial}
\label{section1:characteristicpolynomial}

Recall that if $k$ is a field and $L$ is a lattice then the \emph{M\"{o}bius function}
$\mu=\mu_L$ of $L$  is the $k$-valued function on the intervals $[x,y]$ defined recursively by 
$\mu(x,y)=-\sum_{x\leq z<y} \mu(x,z),\text { for all }x<y\text{ in }L$
and $\mu(x,x)=1$. 
If $L$ is an arrangement lattice then the
\emph{characteristic polynomial\/} $\chi_{L}(t)$ is
defined by 
$\chi_{L}(t) = \sum_{x\in L} \mu_L(\0, x) t^{\dim(x)}$.
The $k$-th derivative of $\chi_L$ is denoted $\chi_L^{(k)}$; the value
$(-1)^{\rk(L)-1}\chi^{(1)}(1)$ of the derivative at $1$ is called the
$\beta$-invariant of the arrangement \cite{MR921071}*{7.3}.

We generalise 
to when there is a sheaf $F$
on $L$. The {\em characteristic polynomial of the pair}
$(L,F)$, denoted $\chi_{(L,F)}(t)$,   is defined by
$$
\chi_{(L,F)}(t) = \sum_{x\in L} \mu_L(\0, x) t^{\dim F(x)}.
$$ 
%
If $F$ is the natural sheaf on $L$ then $\chi_{(L,F)}(t) = \chi_{L}(t)$. 


\subsection{Boolean covers}
\label{section1:booleancovers}

Let $L$ be a graded atomic lattice with atoms $A$ and let $B=B(A)$ be the
Boolean lattice on $A$. There is a canonical lattice map $f:B\ra L$ given by 
$$
\textstyle{f:\bigvee_{\kern-0.5mm B}
a_i\mapsto \bigvee_{\kern-0.5mm L}  a_i}
$$
and we refer to the pair $(B,f)$ as 
the \emph{Boolean cover\/} of $L$. We usually write $\widetilde{L}$,
instead of $B$, for the Boolean cover of $L$.
If $F$ is a sheaf on $L$, then there is an induced sheaf $\widetilde{F}$ on
the Boolean cover defined at $x\in \widetilde{L}$ by $\widetilde{F}(x) = F(fx)$
and with structure maps $ \widetilde{F}^y_x = F^{fy}_{fx}\colon
F(fy)\ra F(fx)$. To simplify the notation we will drop the tilde,
writing\footnote{In \cite{EverittTurner15} we wrote $f^*F$ for this
  induced sheaf.} $F$ for $\widetilde{F}$.  
 

For a Boolean lattice $B$ we have $\mu_B(\0, x) = (-1)^{\rk(x)}$;
see \cite{Stanley12}*{Example 3.8.3}. 
Thus,
the characteristic polynomial for $(B,F)$
is given by
$$
\chi_{(B,F)}(t) = \sum_{x\in B} (-1)^{\rk(x)} t^{\dim F(x)}
$$

\begin{proposition} If $\widetilde{L}$ is the Boolean cover of $L$ then
$\chi_{(\widetilde{L},F)}(t) = \chi_{(L,F)}(t)$.
\end{proposition}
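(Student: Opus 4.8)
The plan is to show that the two characteristic polynomials agree coefficient-by-coefficient in $t$ by organising the defining sums according to the value of $\dim F(x)$. Write $\widetilde{L}$ for the Boolean cover and $f\colon\widetilde L\ra L$ for the covering map. By definition $\widetilde F(x)=F(fx)$, so $\dim\widetilde F(x)=\dim F(fx)$; hence
$$
\chi_{(\widetilde L,F)}(t)=\sum_{x\in\widetilde L}(-1)^{\rk(x)}t^{\dim F(fx)}
=\sum_{y\in L}\Big(\sum_{x\in f^{-1}(y)}(-1)^{\rk(x)}\Big)t^{\dim F(y)}.
$$
Comparing with $\chi_{(L,F)}(t)=\sum_{y\in L}\mu_L(\0,y)\,t^{\dim F(y)}$, it therefore suffices to prove that for every $y\in L$,
$$
\sum_{x\in f^{-1}(y)}(-1)^{\rk(x)}=\mu_L(\0,y).
$$

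To establish this fibrewise identity I would first observe that $f^{-1}(y)$ is exactly the set of subsets $S\subseteq A$ of atoms with $\bigvee_L S=y$, graded by cardinality $\rk(S)=|S|$, so the left-hand side is $\sum_{k}(-1)^k n_k(y)$ where $n_k(y)=\#\{S\subseteq A : |S|=k,\ \bigvee_L S=y\}$. The key point is that this alternating sum is a "reduced Euler characteristic" of the simplicial complex (or rather the subset poset) of atom-sets joining to $y$, and that it can be computed by Möbius inversion over $L_{\le y}$. Concretely, for each $z\le y$ let $g_k(z)=\#\{S\subseteq A: |S|=k,\ S\subseteq A_z\}$ where $A_z=\{a\in A: a\le z\}$, i.e. $g_k(z)=\binom{|A_z|}{k}$; then $g_k(z)=\sum_{z'\le z}n_k(z')$ since every $k$-subset of $A_z$ joins to some $z'\le z$. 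By Möbius inversion on the interval $[\0,y]$,
$$
n_k(y)=\sum_{z\le y}\mu_L(z,y)\binom{|A_z|}{k},
$$
and summing $(-1)^k n_k(y)$ over $k$ gives $\sum_{z\le y}\mu_L(z,y)\sum_k(-1)^k\binom{|A_z|}{k}$. Now $\sum_k(-1)^k\binom{m}{k}=(1-1)^m$, which is $0$ unless $m=0$, i.e. unless $A_z=\varnothing$, i.e. unless $z=\0$. Only the term $z=\0$ survives, leaving $\mu_L(\0,y)$, as required.

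The main obstacle — or rather the one place demanding care — is the Möbius-inversion step, in particular checking that $g_k(z)=\binom{|A_z|}{k}$ really does satisfy $g_k(z)=\sum_{z'\le z}n_k(z')$: one must verify that if $S$ is any subset of atoms all lying below $z$ then $\bigvee_L S\le z$ (immediate, since $z$ is an upper bound for $S$), and conversely that $\bigvee_L S$ is itself an element of $[\0,z]$, so the decomposition of $k$-subsets of $A_z$ according to their join is indeed indexed by $z'\le z$. Both are straightforward in an atomic lattice, so this is really bookkeeping rather than a genuine difficulty. An alternative, slicker route avoiding the case $F$ entirely is to note that the proposition for a general sheaf $F$ follows formally from the case of the natural sheaf together with the identity $\sum_{x\in f^{-1}(y)}(-1)^{\rk(x)}=\mu_L(\0,y)$, but since one needs that identity regardless, the direct argument above is the cleanest.
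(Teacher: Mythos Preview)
Your proof is correct and follows essentially the same route as the paper: both reduce the equality of characteristic polynomials to the fibrewise identity $\sum_{x\in f^{-1}(y)}(-1)^{\rk(x)}=\mu_L(\0,y)$ and then compute. The only difference is that the paper simply cites this identity (as an unpacking of \cite{Orlik-Terao92}*{Lemma 2.35}), whereas you supply a direct proof of it via M\"obius inversion on $[\0,y]$ --- your argument is exactly the standard one behind that lemma.
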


\begin{proof}
Unpacking \cite{Orlik-Terao92}*{Lemma 2.35} gives
$\mu_L(\0,x) = \sum_{y\in f^{-1}(x)} (-1)^{\rk(y)}.$
Hence
$$
\chi_{(\widetilde{L},F)}(t)  = \sum_{y\in \widetilde{L}}
                                  (-1)^{\rk(y)} t^{\dim F(y)}
 = \sum_{x\in L} \sum_{y\in f^{-1}(x)}  (-1)^{\rk(y)} t^{\dim F(y)}
= \sum_{x\in L}\mu_L(\0,x)  t^{\dim F(x)}
= \chi_{(L,F)}(t) 
\,\,\,\,\,\qed
$$
\end{proof}


\section{Homology}

In \S\ref{section2:sheafhomology} we recall the basics of the homology of posets with
coefficients in a sheaf and in \S\ref{section2:eulercharacteristics}
we discuss the (graded) Euler characteristic of the resulting
homology. \S\ref{section2:spectralsequences} gives some spectral
sequences that will prove useful in the next section where we compare
(sheaf) homology with the cellular homology defined in
\S\ref{section2:subsection4}.


\subsection{Sheaf homology}
\label{section2:sheafhomology}

For a fixed poset $P$ let $\varinjlim^P\kern-1pt$ be the colimit
functor from sheaves on $P$ to $\rmod$,
and let
$$
\textstyle{\varinjlim_{{\vrule width 0mm height 2.25 mm depth
      0mm}^{*}}^P}
:=L_* \varinjlim^P
$$
be the left derived functors, or \emph{higher colimits\/}. The \emph{homology
$\HS{*}(P;F)$ of $P$ with coefficients in the sheaf $F$\/} are these higher
colimits evaluated at the sheaf $F$. The homology can be computed
using a chain complex 
$S_{\kern-1pt *}(P;F)$ 
whose group of $n$-chains is 
$$
S_{\kern-1pt n}(P;F)=\bigoplus_\ss F(x_0)
$$
where the direct sum is over the totally ordered chains $\sigma =x_n\leq\cdots
\leq x_0$ in $P$. 
For such a chain $\sigma$ 
and $s\in F(x_0)$ we write $s_\ss$ for the element
of $S_{\kern-1pt n}$ that has value $s$ in the component indexed by
$\ss$ and value $0$ in all other components. 
The differential 
$d:S_{\kern-1pt n}(P;F)\ra S_{\kern-1pt n-1}(P;F)$ is then given by
\begin{equation}
  \label{eq:61}
ds_\ss=
F_{x_1}^{x_0}(s)_{d_0\ss} 
+\sum_{i=1}^n (-1)^i s_{d_i\ss} . 
\end{equation}
where as usual
$d_i\sigma  = x_n\leq\cdots \leq \widehat{x}_i \leq \cdots \leq x_0$
for $0\leq i\leq n$. 

We have (see \cite{Gabriel_Zisman67}*{Appendix II})
$$
\HS {*}(P;F) =
\textstyle{\varinjlim_{{\vrule width 0mm height 2.25 mm depth
      0mm}^{*}}^P} F
\cong HS_{\kern-1pt *}(P;F).
$$

The following are some well-known properties of homology.

\begin{lemma}
\label{lemma:basicHSproperties}
  \begin{enumerate}
  \item If $\Delta M$ is a constant sheaf then  $\HS {*}(P;\Delta M)
    \cong H_*(|P\kern1pt|,M)$, the ordinary simplicial homology of the
    order complex $|P\kern1pt|$, which is the geometrical realisation of the
    simplicial complex  
    whose vertices are elements of $P$ and $n$-simplicies
    are chains
    $\sigma =x_n\leq\cdots\leq x_0$.
  \item If $P$ has a minimum or maximum, and $\Delta M$ is a
    constant sheaf, then $\HS {0}(P;\Delta M)=M$ and
    $\HS {i}(P;\Delta M)$ vanishes for $i>0$. 
  \item If $P$ has a minimum $\0$, and $F$ is any sheaf, then
    $\HS {0}(P;F)=F(\0)$ and $\HS {i}(P;F)$ vanishes
    for $i>0$.
  \end{enumerate}
\end{lemma}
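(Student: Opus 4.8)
The plan is to prove the three parts in order, with part (1) the foundation and parts (2), (3) quick consequences. For part (1), I would simply observe that when $F = \Delta M$ is constant, the chain complex $S_*(P;\Delta M)$ of \S\ref{section2:sheafhomology} has $S_n(P;\Delta M) = \bigoplus_\sigma M$, with the sum over chains $\sigma = x_n \leq \cdots \leq x_0$, and the differential \eqref{eq:61} has $F^{x_0}_{x_1} = \mathrm{id}_M$, so $d s_\sigma = \sum_{i=0}^n (-1)^i s_{d_i\sigma}$. This is exactly the (augmented-free) simplicial chain complex of the order complex $|P|$ with coefficients in $M$: an $n$-simplex of $|P|$ is a chain of length $n$, and the face maps $d_i$ match. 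Hence $HS_*(P;\Delta M) \cong H_*(|P|, M)$, and combined with the isomorphism $\HS{*}(P;F) \cong HS_*(P;F)$ already recorded in the excerpt, part (1) follows.

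For part (2), I would invoke part (1): if $P$ has a minimum $\0$ (the maximum case is dual, or apply the same argument to $P^{\mathrm{op}}$), then the order complex $|P|$ is a cone with apex $\0$, since every chain extends to one containing $\0$ at the bottom and $\0$ is comparable to everything. A cone is contractible, so $H_0(|P|,M) = M$ and $H_i(|P|,M) = 0$ for $i>0$. This is the standard fact that a poset with a least (or greatest) element is contractible.

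Part (3) is the substantive one and the main obstacle: here $F$ is an arbitrary sheaf, not constant, so the cone/contractibility argument does not directly apply — the structure maps $F^{x_0}_{x_1}$ in \eqref{eq:61} are genuinely present. The cleanest route is to build an explicit chain homotopy exhibiting the inclusion $\{\0\} \hookrightarrow P$ as a homotopy equivalence at the chain level. Concretely, define $h : S_n(P;F) \to S_{n+1}(P;F)$ by $h(s_\sigma) = \pm\, s_{\sigma'}$ where $\sigma' = x_n \leq \cdots \leq x_0 \leq \0$ is $\sigma$ with $\0$ prepended at the top (recall $\0 \geq x$ for all $x$ in the reverse-type ordering — careful: with $\0$ the \emph{minimum}, $\0 \le x_n$, so $\0$ is prepended at the \emph{bottom}, giving $\sigma' = x_n \le \cdots \le x_0$ with a new smallest element, i.e. $\0 \le x_n \le \cdots \le x_0$). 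Since $\0$ sits at the bottom of the new chain, the new top element is still $x_0$, so $h$ carries $F(x_0)$ to $F(x_0)$ with no structure map needed, and the structure-map term in \eqref{eq:61} never interferes with $h$. A direct computation of $dh + hd$ on $s_\sigma$ then telescopes: all the interior faces cancel in pairs, leaving $s_\sigma$ itself together with (for $\sigma$ not already containing $\0$) a correction supported on chains through $\0$, and one checks this shows $dh + hd = \mathrm{id} - \iota \pi$ where $\pi$ projects onto chains through $\0$ and $\iota$ is the evident splitting — equivalently, that $S_*(P;F)$ deformation retracts onto the subcomplex of chains containing $\0$, which is itself the complex computing $H_*$ of a point with coefficients in the fixed module $F(\0)$ (since a chain $\0 \le \cdots$ has top $\0$... no: top is $x_0$).

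I expect the sign bookkeeping in the homotopy identity to be the only real friction; the conceptual content is that $P$ with a minimum is contractible and every sheaf restricts compatibly along the retraction to $\0$, so $\HS{*}(P;F) = \varinjlim^P F$ in degree $0$ and vanishes above, with $\varinjlim^P F = F(\0)$ because $\0$ is initial in $P$ viewed as a category (the colimit of any functor over a category with an initial object is its value there). In fact this last remark gives an even shorter proof of part (3): $\0$ being a minimum means it is initial in $P$, hence terminal in $P^{\mathrm{op}}$... rather, for a \emph{contravariant} functor $F : P \to \rmod$, i.e. a covariant functor $P^{\mathrm{op}} \to \rmod$, the colimit over $P^{\mathrm{op}}$ is governed by initial objects of $P^{\mathrm{op}}$, which are the maxima of $P$ — so one must be slightly careful about variance, but either way an element comparable to all others forces $\varinjlim^P F \cong F(\0)$ and, the indexing category being "contractible" with a zero object in the appropriate sense, all higher colimits vanish. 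I would present part (3) via the explicit homotopy to keep the argument self-contained and to make the role of \eqref{eq:61} transparent.
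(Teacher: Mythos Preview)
The paper does not actually prove this lemma: it is introduced with ``The following are some well-known properties of homology'' and stated without argument. So there is no paper proof to compare against, and your plan stands or falls on its own.

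Your parts (1) and (2) are correct and standard. For part (3), the explicit cone-contraction homotopy you outline does work: setting $h(s_\sigma)=(-1)^{n+1}s_{\sigma'}$ with $\sigma'=\0\leq x_n\leq\cdots\leq x_0$, one gets $dh+hd=\mathrm{id}$ on $S_n$ for $n\geq 1$, and on $S_0$ one gets $(dh+hd)(s_{x_0})=s_{x_0}-F^{x_0}_\0(s)_\0$. This exhibits the augmentation $\pi\colon S_0\to F(\0)$, $s_{x_0}\mapsto F^{x_0}_\0(s)$, as a quasi-isomorphism, giving $\HS 0=F(\0)$ and $\HS i=0$ for $i>0$. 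However, your description of $\pi$ as ``projecting onto chains through $\0$'' is not right --- it is the map to the single module $F(\0)$ --- and the aside about the subcomplex of chains containing $\0$ is a red herring (such chains still have arbitrary top element $x_0$, as you half-notice). Your categorical remark at the end also has the variance tangled: for a contravariant $F\colon P\to\rmod$, equivalently covariant $P^{\mathrm{op}}\to\rmod$, the colimit is computed at a \emph{terminal} object of $P^{\mathrm{op}}$, which is the minimum $\0$ of $P$; this does give $\varinjlim F=F(\0)$ directly and indeed, together with contractibility of the nerve, suffices for the whole of part (3). Either route is fine once the bookkeeping is cleaned up.
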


Let $T_*(P;F)$ be the chain complex whose $n$-chains are
$T_n(P;F)=\bigoplus_{\ss}F(x_0)$,
the sum is over the 
non-degenerate chains $\ss=x_n<\cdots <x_0$, and 
with differential given by the formula
(\ref{eq:61}). Then $T^*(P;F)$ is a
sub-complex of $S_{\kern-1pt *}$ homotopy equivalent to it (see for
example \cite{EverittTurner19a}*{2.1}).
We will
interchange between the $S_*$ and $T_*$ complexes as convenience dictates.


If $F^\blob$ is a graded sheaf then $\HS {*}(P;F^\blob) = \bigoplus_j \HS {*}(P;F^j)$ has the
structure of a bi-graded vector space.


\subsection{Euler characteristics}
\label{section2:eulercharacteristics}

As usual, the \emph{Euler characteristic\/} of homology is defined to be 
$$
\chi
\HS{*}(P;F)=\sum_{n}(-1)^n\dim\HS{n}(P;F).
$$ 
If $V_\blob$ is an graded vector space then its \emph{graded
dimension\/} is $\dim_q V_\blob=\sum_k \dim V_k \, q^k$, and if $F^\blob$ is
a graded sheaf then the \emph{graded Euler characteristic\/} of the
homology $\HS{*}(P;F^\blob)$ is given by
$$
\chi_q \HS{*}(P;F^\blob)
=
\sum_{n}(-1)^n\dim_q\HS{n}(P;F^\blob)
=
\sum_{n,k}(-1)^n\dim\HS{n}(P;F^k)q^k
=
\sum_{k}\chi \HS{*}(P;F^k)q^k.
$$

\begin{proposition}
The Euler characteristic 
${\displaystyle \chi \HS {*}(L\kern-1pt\setminus \kern-1pt \0; F) = - \sum_{x\in
  L\kern-1pt\setminus \kern-1pt \0} \mu_L(\0,x) \dim F(x)}$
\end{proposition}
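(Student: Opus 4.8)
The plan is to compute the Euler characteristic directly from the chain complex $S_*(L\setminus\0; F)$ (or equivalently the homotopy-equivalent non-degenerate complex $T_*(L\setminus\0; F)$), since for a finite complex of finite-dimensional vector spaces the Euler characteristic of the homology equals the alternating sum of the dimensions of the chain groups. Thus
$$
\chi \HS{*}(L\setminus\0; F) = \sum_{n\geq 0} (-1)^n \dim T_n(L\setminus\0; F) = \sum_{n\geq 0} (-1)^n \sum_{x_n < \cdots < x_0} \dim F(x_0),
$$
where the inner sum runs over strictly increasing chains in $L\setminus\0$. First I would reorganise this double sum by grouping the chains according to their top element $x_0 = x$: the coefficient of $\dim F(x)$ is then $\sum_{n\geq 0}(-1)^n c_n(x)$, where $c_n(x)$ is the number of strictly increasing chains $x_n < \cdots < x_1 < x$ in $L\setminus\0$ of length $n$ ending at $x$ (for $n=0$ this is the single empty-below chain $x_0 = x$, contributing $1$).

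The combinatorial heart of the argument is the identity $\sum_{n\geq 0}(-1)^n c_n(x) = -\mu_L(\0, x)$. I would deduce this from the standard description of the Möbius function via chains: Philip Hall's theorem states that for $\0 < x$ in a finite poset, $\mu_L(\0,x) = \sum_{k\geq 0}(-1)^k (\text{number of chains } \0 = y_0 < y_1 < \cdots < y_k = x)$. A chain from $\0$ to $x$ of length $k$ corresponds precisely to a chain $y_1 < \cdots < y_{k-1} < x$ in $L\setminus\0$ of length $k-1$ (deleting the forced bottom element $\0$ and keeping $x$ as the top), so the number of such chains is $c_{k-1}(x)$. Reindexing with $n = k-1$ gives $\mu_L(\0,x) = \sum_{n\geq -1}(-1)^{n+1} c_n(x) = -\sum_{n\geq 0}(-1)^n c_n(x)$ (the $n=-1$ term being the single chain $\0 = x$, which occurs only when $x = \0$ and is irrelevant here since $x \in L\setminus\0$). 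Combining this with the previous display yields
$$
\chi \HS{*}(L\setminus\0; F) = \sum_{x\in L\setminus\0} \left(\sum_{n\geq 0}(-1)^n c_n(x)\right)\dim F(x) = -\sum_{x\in L\setminus\0}\mu_L(\0,x)\dim F(x),
$$
which is the claim.

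I expect the only real subtlety to be bookkeeping of indices and the edge case $n=0$: one must be careful that the length-zero chains (single elements) are counted, that the sum is over $L\setminus\0$ so the forced bottom element never appears as an interior vertex, and that the shift between ``chains in $L$ from $\0$ to $x$'' and ``chains in $L\setminus\0$ ending at $x$'' is handled consistently with signs. Everything else — the passage from homology to chains, the finiteness needed to justify the alternating-sum formula, and the use of Philip Hall's theorem — is routine and can be cited. An alternative, essentially equivalent, route would be to filter the sheaf homology by the ``objectwise'' decomposition and observe that $\chi$ is additive, reducing to the constant-sheaf case where the order complex of $L_{<x}\setminus\0 = (\0,x)$ has reduced Euler characteristic $\mu_L(\0,x)$; but the direct chain-level computation above is cleaner and self-contained.
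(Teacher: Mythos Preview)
Your proposal is correct and follows essentially the same route as the paper: compute the Euler characteristic at the chain level using the complex $T_*$, regroup the alternating sum of $\dim T_n$ by the top element $x$ of each chain, and then invoke Philip Hall's formula (Stanley, Proposition~3.8.5) to identify the resulting alternating chain count with $-\mu_L(\0,x)$. The paper is slightly terser in that it simply cites the identity $\mu_L(\0,x)=-\sum_n(-1)^n|ch_n(x)|$ rather than deriving it by reindexing, but the argument is otherwise identical.
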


\begin{proof}
Let $x\in L$ and define $ch_n(x)$ to be the set of (strict) $n$-chains
in $L\kern-1pt\setminus \kern-1pt \0$ 
of the form $ x_n < \cdots < x_{1}< x_0$, where $x_0=x$. If $\sigma$ is
such a chain we write $\ell(\sigma) = n$. Then by
\cite{Stanley12}*{Proposition 3.8.5} we have 
$$
\mu_L(\0, x) =   \sum_{\sigma\in ch_*(x)} (-1)^{\ell(\sigma)-1}.
$$
Recall that the Euler characteristic is the same as the 
alternating sum of the dimensions of the chain groups in
a complex computing the homology, so
$$
\chi \HS {*}(L\kern-1pt\setminus \kern-1pt \0; F) = \chi
T_{*}(L\kern-1pt\setminus \kern-1pt \0; F) = \sum_{n\geq 0} (-1)^n
\dim T_{n}(L\kern-1pt\setminus \kern-1pt \0; F).
$$
The dimension of $ T_{n}(L\kern-1pt\setminus \kern-1pt \0; F)$ can calculated as
$$
\dim T_{n}(L\kern-1pt\setminus \kern-1pt \0; F) = 
\kern-7pt\sum_{x_n < \cdots < x_{1}< x} \kern-7pt\dim F(x) = 
\sum_{x\in L\kern-1pt\setminus \kern-1pt \0} |ch_n(x)| \dim F(x)
$$
giving
$$
\chi \HS {*}(L\kern-1pt\setminus \kern-1pt \0; F)
= \sum_{n\geq 0} 
\sum_{x\in L\kern-1pt\setminus \kern-1pt \0} (-1)^n |ch_n(x)| \dim F(x)
=
 \sum_{x\in L\kern-1pt\setminus \kern-1pt \0} \sum_{n\geq 0}
      (-1)^n |ch_n(x)| \dim F(x).
$$
The value of the M\"obius function $\mu_L(\0, x)$ may be expressed as
$$
\mu_L(\0, x) = - \sum_n (-1)^n |ch_n(x)|
$$
(see, for example, \cite{Stanley12}*{Proposition 3.8.5}), from which we get

$$
\chi \HS {*}(L\kern-1pt\setminus \kern-1pt \0; F) 
 =    \sum_{x\in L\kern-1pt\setminus \kern-1pt \0} \sum_{n\geq 0}
      (-1)^n |ch_n(x)| \dim F(x) 
=- \sum_{x\in L\kern-1pt\setminus \kern-1pt \0} \mu_L(\0,x) \dim F(x).\hspace*{1.05cm}\qed
$$
\end{proof}

\begin{corollary} \label{cor:derchi}
Writing $\chi^\prime_{(L,F)}(t)$ for the derivative of the characteristic polynomial $\chi_{(L,F)}(t)$, we have
$$
\chi \HS {*}(L\kern-1pt\setminus \kern-1pt \0; F) =  
\dim F(\0)
- \chiprime.
$$
\end{corollary}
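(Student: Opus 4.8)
The plan is simply to differentiate the characteristic polynomial of the pair $(L,F)$, evaluate at $t=1$, and compare with the formula for $\chi\HS{*}(L\setminus\0;F)$ established in the preceding Proposition. Recall that $\chi_{(L,F)}(t) = \sum_{x\in L}\mu_L(\0,x)\,t^{\dim F(x)}$, so that
$$
\chi^\prime_{(L,F)}(t) = \sum_{x\in L}\mu_L(\0,x)\,\dim F(x)\,t^{\dim F(x)-1},
$$
and hence $\chiprime = \sum_{x\in L}\mu_L(\0,x)\,\dim F(x)$.

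Next I would isolate the contribution of the minimum $\0$ from this sum. Since $\mu_L(\0,\0)=1$, the term indexed by $x=\0$ equals $\dim F(\0)$, so
$$
\chiprime = \dim F(\0) + \sum_{x\in L\setminus\0}\mu_L(\0,x)\,\dim F(x).
$$
By the Proposition above, the remaining sum on the right is precisely $-\,\chi\HS{*}(L\setminus\0;F)$. Rearranging gives $\chi\HS{*}(L\setminus\0;F) = \dim F(\0) - \chiprime$, which is the assertion.

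There is no genuine obstacle here; the only point requiring care is the bookkeeping of the $x=\0$ term, which accounts for the constant $\dim F(\0)$ on the right-hand side. Conceptually: the derivative of the characteristic polynomial at $1$ runs over all of $L$, whereas the Euler characteristic of the homology involves only $L\setminus\0$, and the discrepancy between the two is exactly the $\0$-term $\mu_L(\0,\0)\dim F(\0) = \dim F(\0)$.
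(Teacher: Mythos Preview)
Your proof is correct and follows essentially the same approach as the paper: differentiate $\chi_{(L,F)}(t)$, evaluate at $t=1$, split off the $x=\0$ term (using $\mu_L(\0,\0)=1$), and apply the preceding Proposition to identify the remaining sum as $-\chi\HS{*}(L\setminus\0;F)$.
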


\begin{proof}
From the definition of the characteristic polynomial we have
$$
\chi^\prime_{(L,F)}(t) 
 = \sum_{x\in L} \mu_L(\0, x) \dim F(x) t^{\dim F(x)-1}
 $$
so that
$$
\chiprime =  \sum_{x\in L} \mu_L(\0, x) \dim F(x).
$$ 
This then gives
$$
\chiprime
= \mu_L(\0, \0) \dim F(\0) + \sum_{x\in
  L\kern-1pt\setminus \kern-1pt \0} \mu_L(\0, x) \dim F(x) =
\dim F(\0) - \chi \HS {*}(L\kern-1pt\setminus \kern-1pt \0; F).
\hspace*{0.75cm}\qed
$$
\end{proof}



\subsection{Some spectral sequences}
\label{section2:spectralsequences}

There is a Leray-Serre style spectral sequence associated to a poset
map. The following is an adaptation of
\cite{Gabriel_Zisman67}*{Appendix II, Theorem 3.6} -- see also
\cite{EverittTurner19a}*{\S2.3}.

Let $f\colon P \ra Q$ be a poset map and let $F$ be a sheaf on $P$.
For each $q\geq 0$ define a sheaf $H_q^{\text{fib}}$ on $Q$ by
$$
H_q^{\text{fib}}(x) = \HS {q}(f^{-1}Q_{\geq x}; F)
$$ 
for $x\in Q$. 
If $x \leq y$ in $Q$ then the structure map
$H_q^{\text{fib}}(y) \ra H_q^{\text{fib}}(x)$ 
is induced by the
inclusion $ Q_{\geq y} \hookrightarrow Q_{\geq x}$.  

\begin{theorem}[Leray-Serre]
\label{theorem:lerayserregrothendieck}
There is a spectral sequence 
$$
E^2_{p,q}=\HS {p}(Q; H_q^{\text{fib}})\Rightarrow \HS {p+q}(P;F)
$$
\end{theorem}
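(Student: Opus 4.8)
The plan is to deduce this from the Grothendieck spectral sequence for the composite of derived functors, exactly as in \cite{Gabriel_Zisman67}*{Appendix II}. The colimit functor $\varinjlim^P$ factors (up to natural isomorphism) through $Q$: if $f\colon P\ra Q$ is a poset map then $\varinjlim^P F \cong \varinjlim^Q (f_! F)$, where $f_!$ is the left Kan extension along $f$. The key computation is that the left derived functors of $f_!$ are given pointwise by the fibrewise homology, i.e. $(L_q f_!\, F)(x) \cong \HS{q}(f^{-1}Q_{\geq x}; F)$, which is precisely the sheaf $H_q^{\text{fib}}$ as defined above. Granting this and the fact that $f_!$ carries suitably acyclic objects to $\varinjlim^Q$-acyclic objects, the Grothendieck spectral sequence for the composite $\varinjlim^P = \varinjlim^Q \circ\, f_!$ reads
$$
E^2_{p,q} = (L_p \varinjlim\nolimits^Q)(L_q f_!\, F) = \HS{p}(Q; H_q^{\text{fib}}) \Rightarrow (L_{p+q}\varinjlim\nolimits^P)(F) = \HS{p+q}(P;F),
$$
which is the claimed spectral sequence.

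First I would verify the factorisation $\varinjlim^P \cong \varinjlim^Q\circ f_!$ at the level of ordinary (underived) functors; this is formal adjunction nonsense, since $\varinjlim^Q$ is left adjoint to the constant-sheaf functor on $Q$, $f_!$ is left adjoint to restriction $f^*$, and restriction followed by the constant-sheaf functor on $Q$ is the constant-sheaf functor on $P$. Next I would identify $L_q f_!$. The standard formula for the left Kan extension gives $(f_! F)(x) = \varinjlim_{f^{-1}Q_{\geq x}} F$, the colimit over the comma category, which for a poset map is just the up-set $f^{-1}Q_{\geq x}$ with $F$ restricted to it; deriving this and using $\HS{q}(-;-)=L_q\varinjlim$ from \S\ref{section2:sheafhomology} yields $(L_q f_! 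F)(x)=\HS{q}(f^{-1}Q_{\geq x};F)$, and one checks the structure maps for $x\leq y$ are induced by the inclusion of up-sets, matching the definition of $H_q^{\text{fib}}$. Finally I would check the acyclicity hypothesis needed to run the Grothendieck machine: projective (or, more cheaply, suitably free/flasque) sheaves on $P$ are sent by $f_!$ to $\varinjlim^Q$-acyclic sheaves on $Q$ — this holds because $f_!$ preserves projectives (being a left adjoint whose right adjoint $f^*$ is exact) and projectives are $\varinjlim$-acyclic.

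The main obstacle is the identification of the derived Kan extension with fibrewise homology, together with the verification that $f_!$ behaves well on the chosen acyclic/projective class — in other words, checking the technical hypotheses of the Grothendieck spectral sequence in the category of sheaves on a poset rather than just quoting it. Since $\rmod$ has enough projectives and the poset $P$ is finite, presheaf categories on $P$ have enough projectives (representable-type generators), so this is routine but needs to be said; alternatively, one can bypass projectives by exhibiting an explicit double complex — the chain complex $S_*(P;F)$ filtered by the value of $f$ on the top element of each chain — whose two spectral sequences give exactly $E^2_{p,q}=\HS{p}(Q;H_q^{\text{fib}})$ and whose abutment is $\HS{p+q}(P;F)$; this concrete route is the one I would actually write up, as it avoids any delicate acyclicity bookkeeping. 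Either way the content is the same, and the result is a direct adaptation of \cite{Gabriel_Zisman67}*{Appendix II, Theorem 3.6}.
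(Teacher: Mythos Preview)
The paper does not supply its own proof of this theorem; it simply records it as an adaptation of \cite{Gabriel_Zisman67}*{Appendix II, Theorem 3.6} (with a further pointer to \cite{EverittTurner19a}*{\S2.3}). Your proposal is correct and is exactly the argument underlying that reference: the Grothendieck spectral sequence for the composite $\varinjlim^P \cong \varinjlim^Q \circ f_!$, together with the identification of the derived left Kan extension as fibrewise homology over the up-sets $f^{-1}Q_{\geq x}$. Your alternative route via an explicit double complex/filtration of $S_*(P;F)$ is equally standard and would also serve; there is nothing to compare against in the paper beyond the citation.
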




We are interested in a special case of this spectral sequence which we now describe. 
Let $P$ be a poset equipped with sheaf $F$ and let 
$$
P=\bigcup_{\aa\in K}P_{\aa}
$$
be a covering of $P$ by upper convex subposets. 
We define a poset $N$,
the {\em nerve} of the covering, that mimics the simplicial complex
nerve of a covering of a space. If $X$ is a non-empty subset of the
indexing set $K$, let
\begin{equation}
  \label{eq:1}
P_X = \bigcap_{\aa\in X}P_{\aa}
\end{equation}
Then $N$ is the sub-poset of the Boolean lattice $B(K)$ consisting of
those $X$ for which $P_X\not=\varnothing$. 

For each $q\geq 0$, define a sheaf $\HH_q$ on $N$ by
$$
\HH_q(X)=\HS{q}(P_X;F)
$$
and with structure map 
$\HH_q(X\subset Y):\HS{q}(P_Y;F)\ra \HS{q}(P_X;F)$
induced by the inclusion
$P_Y\hookrightarrow P_X$.

\begin{theorem}
\label{theorem:leray}
Given the set-up of the previous paragraph, there is a spectral sequence 
$$
E^2_{p,q}=\HS{p}(N;\HH_q)\Rightarrow \HS{p+q}(P;F).
$$
\end{theorem}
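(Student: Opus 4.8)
The plan is to exhibit this spectral sequence as an instance of the Leray--Serre spectral sequence of Theorem~\ref{theorem:lerayserregrothendieck}, taking for the target poset the nerve $N$ itself.

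First I would build the relevant poset map $f\colon P\ra N$. For $p\in P$ put $X(p)=\{\aa\in K: p\in P_{\aa}\}$. Since the $P_{\aa}$ cover $P$ the set $X(p)$ is non-empty, and $p\in P_{X(p)}$, so $X(p)\in N$. If $p\leq p'$ in $P$ and $p\in P_{\aa}$ then upper convexity of $P_{\aa}$ gives $p'\in P_{\aa}$; hence $X(p)\subseteq X(p')$, and $f\colon p\mapsto X(p)$ is order preserving. This is the one and only place the hypothesis that the covering is by upper convex subposets is consumed. (One should also observe in passing that $N$ really is a subposet of $B(K)$, being closed downwards among non-empty subsets: if $X\subseteq Y$ then $P_X\supseteq P_Y$, so $P_Y\neq\varnothing$ forces $P_X\neq\varnothing$.)

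Next I would identify the fibres of $f$ over the upper sets of $N$. Unwinding the definitions,
$$
f^{-1}(N_{\geq X})=\{p\in P: X(p)\supseteq X\}=\bigcap_{\aa\in X}P_{\aa}=P_X
$$
for every $X\in N$. Feeding this into Theorem~\ref{theorem:lerayserregrothendieck} with the sheaf $F$ on $P$, the fibre-homology sheaf there satisfies $H_q^{\text{fib}}(X)=\HS{q}(f^{-1}(N_{\geq X});F)=\HS{q}(P_X;F)=\HH_q(X)$; and for $X\subseteq Y$ the inclusion $N_{\geq Y}\hookrightarrow N_{\geq X}$ is carried by $f^{-1}$ to the inclusion $P_Y\hookrightarrow P_X$, so the structure map of $H_q^{\text{fib}}$ coincides with that of $\HH_q$. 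Thus $H_q^{\text{fib}}=\HH_q$ as sheaves on $N$, and Theorem~\ref{theorem:lerayserregrothendieck} delivers $E^2_{p,q}=\HS{p}(N;\HH_q)\Rightarrow\HS{p+q}(P;F)$.

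Since the whole argument amounts to recognising the relevant special case, I do not expect a serious obstacle. The only points needing genuine (if routine) care are the verification that $f$ is order preserving --- exactly where ``upper convex'' is used --- and the compatibility, under the identification $f^{-1}(N_{\geq X})=P_X$, of the structure maps of $H_q^{\text{fib}}$ with those of $\HH_q$; both reduce to the functoriality of sheaf homology under inclusions of subposets.
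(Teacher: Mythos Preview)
Your proof is correct and follows essentially the same route as the paper: define $f\colon P\to N$ by $p\mapsto\{\aa\in K:p\in P_\aa\}$, verify it is a poset map using upper convexity, identify $f^{-1}(N_{\geq X})=P_X$, and then invoke Theorem~\ref{theorem:lerayserregrothendieck}. Your write-up is in fact slightly more careful than the paper's in checking that $f$ lands in $N$ and that the structure maps of $H_q^{\text{fib}}$ agree with those of $\HH_q$.
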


\begin{proof}
Define a map $f:P\ra N$ by $f(x) = \{\aa \in K : x\in P_\aa\}$. Let
$x\leq y$ in $P$ and suppose that $\aa\in f(x)$, hence $x\in
P_\aa$. As $P_\aa$ is upper convex we have $y\in P_\aa$ too, hence
$\aa\in f(y)$. Thus $f(x)\subseteq f(y)$ in $N$, and $f$ is a poset
map. 

We now claim that for $X\in N$, the fiber $f^{-1}N_{\geq X}$ is the
subposet $P_X$ in (\ref{eq:1}). It then follows that the fiber sheaves $H_*^{\text{fib}}$
of $f$ are the $\HH_*$ above, and hence the result after
applying Theorem \ref{theorem:lerayserregrothendieck}. To see the
claim, we have $x\in P_X$ if and only if $x\in P_\aa$ for all $\aa\in
X$; this in turn happens if and only if
$X\subseteq f(x)$, or equivalently, $x\in f^{-1}N_{\geq X}$. 
\qed
\end{proof}

Lusztig \cite{Lusztig74}*{\S 1.2} gives a simplicial
complex version of this result which he
describes as ``well known'',
although the reader might struggle to find a reference. 


\subsection{Passing to the Boolean cover}
\label{section2:Booleancover}

The spectral sequence of a covering from the previous section (Theorem \ref{theorem:leray}) allows
us to pass from a lattice $L$ to its Boolean cover $\widetilde{L}$
when computing homology. The following result can be found in
\cite{Lusztig74}*{\S 1.2}. 

\begin{theorem}
\label{theorem:boolean:cover}
  Let $L$ be a graded atomic lattice with sheaf $F$ and
  let $f:\widetilde{L}\ra  L$ be its Boolean cover. Then
$$
\HS{*}(L\setminus\0;F)
\cong
\HS{*}(\widetilde{L}\setminus\0;F).
$$
\end{theorem}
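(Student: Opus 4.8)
The plan is to apply the spectral sequence of a covering (Theorem~\ref{theorem:leray}) to a carefully chosen cover of $L\setminus\0$, arranged so that its nerve is exactly $\widetilde{L}\setminus\0$ and so that every nonempty intersection appearing in the cover is contractible for homology purposes. First I would index the cover by the atoms $A$ of $L$: for each atom $a$ set $P_a=(L\setminus\0)_{\geq a}=\{x\in L: x\geq a\}$. Each $P_a$ is upper convex since $L$ is a lattice (if $x\geq a$ and $x\leq y$ then $y\geq a$), and because $L$ is atomic every $x\neq\0$ lies above some atom, so $\bigcup_{a\in A}P_a=L\setminus\0$. For a nonempty subset $X\subseteq A$ the intersection $P_X=\bigcap_{a\in X}P_a$ is $\{x\in L: x\geq a \text{ for all }a\in X\}=(L\setminus\0)_{\geq \bigvee_L X}$, which is always nonempty (it contains $\bigvee_L X$). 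Hence the nerve $N$ is all of $B(A)\setminus\varnothing = \widetilde{L}\setminus\0$.

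The second step is to identify the fibre sheaves $\HH_q$. Since $P_X$ has the minimum element $\bigvee_L X$, Lemma~\ref{lemma:basicHSproperties}(3) gives $\HS{q}(P_X;F)=0$ for $q>0$ and $\HS{0}(P_X;F)=F(\bigvee_L X)$. Thus $\HH_q=0$ for $q>0$, and $\HH_0$ is the sheaf on $\widetilde{L}\setminus\0$ with value $F(\bigvee_L X)$ at $X$ and structure maps induced by the inclusions $P_Y\hookrightarrow P_X$ for $X\subseteq Y$; since those inclusions send the minimum $\bigvee_L Y$ to something $\geq\bigvee_L X$, the induced map on $\HS{0}$ is the structure map $F^{\,\bigvee_L Y}_{\,\bigvee_L X}$ of $F$. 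But $\bigvee_L X = f(X)$ under the Boolean cover map $f:\widetilde{L}\ra L$, so $\HH_0$ is precisely the pulled-back sheaf $\widetilde{F}$ on $\widetilde{L}\setminus\0$, which by our notational convention we also call $F$.

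With the $E^2$-page concentrated on the row $q=0$, the spectral sequence of Theorem~\ref{theorem:leray} collapses, yielding
$$
\HS{p}(\widetilde{L}\setminus\0;F)=\HS{p}(N;\HH_0)\cong \HS{p}(L\setminus\0;F),
$$
which is the claim. The main point requiring care — and the only place a subtle error could creep in — is the identification of the nerve $N$ with $\widetilde{L}\setminus\0$ together with the matching of the two sheaves: one must check that $P_X\neq\varnothing$ for \emph{every} nonempty $X\subseteq A$ (so that $N$ is the full punctured Boolean lattice, not a proper subposet), and that the structure maps of $\HH_0$ agree on the nose with those of $\widetilde{F}$. Both follow from the lattice axioms and the definition of the cover map, but they are exactly the compatibility checks that make the collapse argument go through; after that, the conclusion is immediate.
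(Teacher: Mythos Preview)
Your proof is correct and follows essentially the same route as the paper: cover $L\setminus\0$ by the upper intervals $L_{\geq a}$ indexed by atoms, identify the nerve with $\widetilde{L}\setminus\0$, use Lemma~\ref{lemma:basicHSproperties}(3) to see that the fibre sheaves $\HH_q$ vanish for $q>0$ and that $\HH_0$ is the pulled-back sheaf, and then read off the isomorphism from the collapsed spectral sequence of Theorem~\ref{theorem:leray}. If anything, you are slightly more explicit than the paper in verifying that the structure maps of $\HH_0$ match those of $\widetilde{F}$.
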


\begin{proof}
 We cover $L\setminus\0$ and apply the spectral sequence of Theorem
\ref{theorem:leray}. If $A$ is the set of atoms of $L$, then
$L\setminus\0
=
\bigcup_{a\in A}L_{\geq a}$,
is a covering by upper convex sets. If $X\subseteq A$ then
$$
L_X=\bigcap_{a\in X}L_{\geq a}
=L_{b}
\text{ where }
b=\bigvee_{a\in X}a
$$
as $L_{\geq x}\cap L_{\geq y}=L_{\geq x\vee y}$. Thus
$L_X\not=\varnothing$ for all non-empty $X\subseteq A$, and the nerve
poset $N$ is just the Boolean lattice minus its minimum, i.e.
$N=B(A)\setminus\0=\widetilde{L}\setminus\0$. The sheaf $\HH_q$ is given
by
$$
\HH_q(X)
=
\HS{q}(L_X;F)
=\HS{q}(L_{\geq X};F)
=
\left\{
\begin{array}{ll}
F(X),  & q=0,\\
0, & q>0
\end{array}\right.
$$
for $\varnothing\not= X\subseteq A$. Thus $\HH_q$ is the trivial sheaf
when $q>0$ and $\HH_0=F$. The $E^2$-page of the sequence of Theorem 
\ref{theorem:leray} is thus zero except for the $q=0$ line, where 
$E^2_{p,0}=\HS{p}(\widetilde{L}\setminus\0;F)$. This gives the desired
isomorphism 
$
\HS{*}(L\setminus\0;F)
\cong
\HS{*}(\widetilde{L}\setminus\0;F)
$.\qed
\end{proof}


\section{Cellular homology}
\label{section2:subsection4}

The ordinary singular homology of a space 
can be computed cellularly. In
\cite{EverittTurner15} we define a cellular cohomology
that computes, for a large class of posets, the cohomology of a
poset with coefficients in a sheaf. In this section we recall the
basics we need (for homology rather than cohomology), restricting
ourselves to the setting of Boolean lattices, and then reprove a 
theorem of Lusztig relating the homology of a lattice equipped with a
sheaf to the cellular homology of the Boolean cover. 
\S\S\ref{section3:SES}-\ref{section3:zeroes} contain technical results
that give a useful splitting theorem in
\S\ref{section3:splittingbooleans}. This leads to the first
main theorem of the paper, the deletion-restriction long exact
sequence for cellular homology -- Theorem 
\ref{section3:theorem:deletion.restriction} of \S\ref{section3:LES}. 


\subsection{Basics}
\label{section3:basics}

Let $B=B(A)$ be the Boolean lattice on the finite set $A$  and let
$F$ be a sheaf on $B$. Pick an ordering on $A$ and write $A=\{a_1,a_2,\ldots,a_n\}$. An
element $x\in B$ is a subset of $A$, say 
$x=\{a_{i_1},\ldots, a_{i_k}\}$, which we write as $x=a_{i_1}\ldots
a_{i_k}$ assuming that $i_m<i_n$ for $m<n$. If  
$$ 
y=a_{i_1}\ldots\widehat{a_{i_j}}\ldots a_{i_k}
$$
for some $j$ then define $\ve_y^x:=(-1)^{j-1}$. 

The \emph{cellular chain complex\/} $C_*(B;F)$ has $k$-chains 
$$
C_k(B;F):=\bigoplus_{\rk(x)=k} F(x)
$$
where the direct sum is over the subsets $x$ of size $k$. The differential $d:C_k\ra
C_{k-1}$ is given by $d=\sum_{y<x}d_y^x$, where the sum is over the pairs
$y<x$ with $y$ of size $k-1$ and $x$ of size $k$, and with
$d_y^x=\ve_y^xF_y^x$. 

If $z$ is a subset of size $k-2$ with $z< x$ and $y_1,y_2$ are the
two subsets of size $k-1$ with $z< y_1,y_2< x$, then 
\begin{equation}
  \label{eq:5}
\ve_z^{y_1}\ve_{y_1}^x+\ve_z^{y_2}\ve_{y_2}^x=0.  
\end{equation}
It follows that $d^2=0$ and $C_*(B;F)$ is a chain complex. Call
$\HC{*}(B;F)$ the \emph{cellular homology\/} of $B$ with coefficients in
$F$. Up to isomorphism this construction is independent of the order
chosen on $A$ and of the sign assignment used (any collection of
$\ve^x_y$ satisfying (\ref{eq:5}) will do).


\subsection{Sheaf = Cellular}
\label{section3:lusztig}

For a Boolean lattice we now have two kinds of homology -- sheaf
and cellular -- and we now show these are isomorphic. The proof of the
following is adapted from \cite{EverittTurner15}*{Theorem 2}

\begin{theorem}
\label{theorem:sheafequalscellular:boolean}
If $B$ is a Boolean lattice and $F$ is a sheaf on $B$ then
$$
\HS{*}(B\setminus\0;F)
\cong
\HC{*}(B\setminus\0;F).
$$
\end{theorem}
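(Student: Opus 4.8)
The plan is to exhibit a chain homotopy equivalence between the sheaf chain complex $T_*(B\setminus\0;F)$ (the non-degenerate chain complex of \S\ref{section2:sheafhomology}) and the cellular complex $C_*(B\setminus\0;F)$ of \S\ref{section3:basics}. The cellular complex is manifestly a quotient (or a retract) of the sheaf complex: a $k$-simplex of $B\setminus\0$ is a flag $x_k<\cdots<x_0$, and the component of $T_k$ indexed by such a flag is $F(x_0)$; the cellular $k$-chains are $\bigoplus_{\rk(x)=k}F(x)$. So first I would set up a chain map $\pi\colon T_*(B\setminus\0;F)\ra C_*(B\setminus\0;F)$ sending the flag $x_k<\cdots<x_0$ to zero unless it is the ``saturated'' flag below $x_0$, i.e. unless $\rk(x_i)=k-i$ for all $i$ (equivalently $x_0$ has rank exactly $k$), in which case it maps $F(x_0)$ identically to the $F(x_0)$-summand of $C_k$. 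One checks that $\pi$ commutes with the differentials: in the sheaf differential (\ref{eq:61}) the only faces of a saturated flag that remain saturated and of full length are $d_0$ and $d_k$ (dropping an interior vertex leaves a gap in ranks), and $d_0$ contributes the structure map $F^{x_0}_{x_1}$ with sign $+1$ while $d_k$ drops the bottom vertex and contributes $\pm 1$; matching these against the cellular differential $d=\sum_{y<x}\ve^x_y F^x_y$ is exactly the bookkeeping that the sign convention $\ve^x_y$ in \S\ref{section3:basics} is designed to make work, using the cocycle identity (\ref{eq:5}).

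Next I would produce a chain map in the other direction, or more efficiently build an explicit chain homotopy $h\colon T_*\ra T_{*+1}$ with $dh+hd=\mathrm{id}-\iota\pi$, where $\iota$ is the inclusion of the saturated part. The natural choice: given a flag $\sigma=(x_k<\cdots<x_0)$ that is not saturated, locate the largest index $i$ at which $\rk(x_{i-1})-\rk(x_i)\ge 2$ (a ``jump''), pick the canonical intermediate element determined by the chosen ordering on $A$ (since $B$ is Boolean, the interval $[x_i,x_{i-1}]$ is again Boolean, so we may insert, say, $x_i$ together with the least atom of $x_{i-1}\setminus x_i$), and let $h\sigma$ insert that element, with the appropriate sign, and $s\in F(x_0)$ pulled back unchanged (the inserted vertex lies below $x_0$, so no structure map is needed at the top). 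Verifying $dh+hd=\mathrm{id}-\iota\pi$ is the standard ``acyclic-matching / discrete-Morse'' computation; alternatively one can phrase the whole thing as an algebraic discrete Morse theory argument on the poset of flags, collapsing each non-saturated flag against the one obtained by the canonical insertion, and cite that the resulting Morse complex is precisely $C_*$. Since \cite{EverittTurner15}*{Theorem 2} proves the cohomological analogue, the cleanest write-up is to say that dualising that argument (replacing colimits/cochains by limits-free chains, reversing arrows) gives exactly this homology statement; the homotopy $h$ above is the chain-level incarnation of their contracting homotopy.

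The main obstacle, and the only place real care is needed, is the sign bookkeeping: one must check that the canonical insertion used to define $h$ is compatible with the choice of $\ve^x_y$ (and hence with (\ref{eq:5})) so that $\pi$ is genuinely a chain map and $h$ genuinely a homotopy, rather than merely up to sign. This is where one uses that the construction of $\HC{*}$ is independent of the ordering of $A$ and of the sign assignment (as remarked after (\ref{eq:5})): one fixes the ordering once, defines $\ve^x_y$ and the insertion rule compatibly, and the verification reduces to the interval $[x_i,x_{i-1}]$, a Boolean lattice of rank $\ge 2$, where a short direct check suffices. Everything else --- the two ends $d_0,d_k$ of the sheaf differential on a saturated flag, the telescoping of $dh+hd$ --- is routine. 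There is no issue with $B\setminus\0$ versus $B$: removing $\0$ just discards the flags whose bottom vertex is $\0$, which are exactly the ones the homotopy would otherwise collapse trivially, and on both complexes this is the standard reduced-versus-unreduced adjustment.
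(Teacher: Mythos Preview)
Your overall strategy—an explicit chain-level comparison between $T_*$ and $C_*$—is a genuine alternative to the paper's route. The paper instead filters $S_*(B\setminus\0;F)$ by the subposets $L_p=\{x:\rk x\le p\}$ and runs the spectral sequence of the filtration; the $E^1$-page is identified, via $\HS *(L_p,L_{p-1};F)\cong\bigoplus_{\rk x=p}\widetilde H_{*-1}(L_{<x};\Delta F(x))$ and the fact that each $|L_{<x}|$ is a sphere (this is where ``Boolean'' enters), with the cellular complex concentrated on the line $q=0$, whence the collapse.

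But your map $\pi$ is not a chain map, and this is structural rather than a sign issue. For a saturated $k$-flag $\sigma=x_k<\cdots<x_0$, only $d_0\sigma$ is again saturated: the face $d_k\sigma$ keeps the same top $x_0$ but has length $k-1$, so it fails your own rank condition. Hence $\pi(ds_\sigma)$ is the single term $F^{x_0}_{x_1}(s)\in F(x_1)\subset C_{k-1}$, whereas $d(\pi s_\sigma)=\sum_{y}\ve^{x_0}_y F^{x_0}_y(s)$ runs over \emph{all} $y$ covered by $x_0$; these disagree once $x_0$ has more than one coatom. The map also fails on non-saturated flags: if the unique gap in $\sigma$ sits at the top then $d_0\sigma$ is saturated, so $\pi(ds_\sigma)\ne 0=d\pi(s_\sigma)$. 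No sign tweak repairs this.

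The idea can be salvaged by going the other way: send $s\in F(x)\subset C_k$ to $\sum_\sigma\ve(\sigma)\,s_\sigma$, summing over saturated flags with top $x$ and with $\ve(\sigma)=\prod_i\ve^{x_{i-1}}_{x_i}$; identity (\ref{eq:5}) then makes the non-$d_0$ face terms cancel in pairs and this \emph{is} a chain map $C_*\to T_*$. Showing it is a quasi-isomorphism still needs the acyclicity of the ``non-saturated'' part, which is precisely the sphere computation the paper's spectral sequence isolates. Note too that the proof the paper adapts from \cite{EverittTurner15}*{Theorem~2} is itself this filtration argument, not a contracting homotopy, so ``dualising that argument'' does not hand you $h$ ready-made.
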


The proof filters the complex $S_{\kern-1pt *}(L;F)$ so that the
standard spectral sequence has $E^1$-page with single non-zero row
the cellular chain complex $C_{\kern-1pt *}(L;F)$.

\begin{proof}
Write $L=B\setminus\0$ and filter the
complex $S_{\kern-1pt *}(L;F)$ by defining $F_pS_{\kern-1pt
  *}=S_*(L_p;F)$, where $L_p=\{x\in 
L:\rk_L(x)\leq p\}$, the elements whose rank \emph{in $L$\/} is at most $p$. The
$E^0$-page of the standard spectral sequence of a filtration is then
$$
E^0_{pq}=\frac{S_{\kern-1pt p+q}(L_p;F)}{S_{\kern-1pt p+q}(L_{p-1};F)}
$$
a quotient complex that we denote by $S_{\kern-1pt *}(L_p,L_{p-1};F)$. 

The
$E^1$-page is $E^1_{pq}=\HS{p+q}(L_p,L_{p-1};F)$. 
Analysing this homology a little further, the arguments of 
\cite{EverittTurner15}*{\S 2}
can be adapted to show 
$$
\HS{*}(L_p,L_{p-1};F)
\cong
\bigoplus_{\rk_L(x)=p}\HS{*}(L_{\leq x},L_{<x};\Delta F(x))
\cong
\bigoplus_{\rk_L(x)=p}\widetilde{H}_{*-1}(L_{<x};\Delta F(x)).
$$
Here, as we have the constant sheaf $\Delta F(x)$, the homology
$\widetilde{H}_*(L_{<x};\Delta F(x))$ is just the ordinary reduced singular
homology of the order complex of $L_{<x}$. 

The poset $L_{<x}$ is isomorphic to a Boolean lattice of rank
$\rk_L(x)$ minus its minimum and maximum. This, in turn, may be identified
with the poset of sub-simplices of the boundary of a standard
$\rk_L(x)$-simplex. Thus, 
the order complex 
$|L_{<x}|$ is a $(\rk(x)-1)$-sphere and
$$
\widetilde{H}_{i-1}(L_{<x};\Delta F(x))
\cong
\left\{
\begin{array}{ll}
  F(x),& i=\rk(x)\\
  0,&\text{ else}.
\end{array}\right.
$$

It follows that $\HS{i}(L_p,L_{p-1};F)=0$ when $i\not=p$, so the
$E^1$-page is trivial except along the $q=0$ line, where 
$$
E^1_{p,0}=\HS{p}(L_p,L_{p-1};F)
=\bigoplus_{\rk_L(x)=p}\kern-2pt F(x)=C_p(L;F)
$$ 
is the module of cellular $p$-chains. The differential 
$
\HS{p-1}(L_{p-1},L_{p-2};F)
\leftarrow
\HS{p}(L_p,L_{p-1};F)
$
coincides with the cellular differential $C_{p-1}\leftarrow C_p$,
and thus
$
\HS{*}(L;F)
\cong
\HC{*}(L;F)
$.\qed
\end{proof}


As a corollary to Theorems \ref{theorem:boolean:cover} and
\ref{theorem:sheafequalscellular:boolean}  
we obtain a result of Lusztig
\cite{Lusztig74}*{Chapter 1}, who proves that the homology of a lattice
with 
coefficients in a sheaf is isomorphic to the \emph{cellular\/}
homology of the Boolean 
cover equipped with the induced sheaf:

\begin{corollary}[Lusztig]
\label{theorem:sheafequalscellular}
Let $L$ be a graded atomic lattice with sheaf $F$ and
let $\widetilde{L}\ra  L$ be its Boolean cover. Then,
$$
\HS{*}(L\setminus\0;F)
\cong
\HC{*}(\widetilde{L}\setminus\0;F),
$$
\end{corollary}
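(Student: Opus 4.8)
The plan is to obtain the statement by composing the two isomorphisms already established in \S\ref{section2:Booleancover} and \S\ref{section3:lusztig}. The statement relates the sheaf homology $\HS{*}(L\setminus\0;F)$ of the original graded atomic lattice to the cellular homology $\HC{*}(\widetilde{L}\setminus\0;F)$ of its Boolean cover, the latter equipped with the induced sheaf which — following our convention of suppressing the tilde — we also denote $F$.

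First I would invoke Theorem \ref{theorem:boolean:cover}, which yields an isomorphism $\HS{*}(L\setminus\0;F)\cong\HS{*}(\widetilde{L}\setminus\0;F)$ between the sheaf homology of $L$ minus its minimum and the sheaf homology of the Boolean cover minus its minimum; recall this is the step that runs the spectral sequence of the covering $L\setminus\0=\bigcup_{a\in A}L_{\geq a}$ and identifies the nerve of that covering with $\widetilde{L}\setminus\0$, with the higher fibre sheaves vanishing by Lemma \ref{lemma:basicHSproperties}. Second, since $\widetilde{L}=B(A)$ is by construction a Boolean lattice, Theorem \ref{theorem:sheafequalscellular:boolean} applies with $B=\widetilde{L}$ to give $\HS{*}(\widetilde{L}\setminus\0;F)\cong\HC{*}(\widetilde{L}\setminus\0;F)$. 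Composing these two isomorphisms produces the asserted identification $\HS{*}(L\setminus\0;F)\cong\HC{*}(\widetilde{L}\setminus\0;F)$.

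There is no genuine obstacle here: the only bookkeeping point is to check that the sheaf entering both cited theorems is consistently the induced sheaf $\widetilde{F}$ on $\widetilde{L}$, which is immediate from our notational convention, and that the hypotheses of both theorems hold verbatim for $(L,F)$ and $(\widetilde{L},F)$ respectively. All of the substantive work has already been carried out in the proofs of Theorems \ref{theorem:boolean:cover} and \ref{theorem:sheafequalscellular:boolean}; alternatively one could splice the two spectral sequences into a single argument, but the two-step composition is cleaner and entirely sufficient.
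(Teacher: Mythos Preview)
Your proposal is correct and matches the paper's own approach exactly: the paper presents this result as an immediate corollary of Theorems \ref{theorem:sheafequalscellular:boolean} and \ref{theorem:boolean:cover}, obtained precisely by composing the two isomorphisms you describe.
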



\subsection{Short exact sequences for cellular homology}
\label{section3:SES}

There are two short exact sequences of cellular chain
complexes that will prove useful.

\paragraph{The sequence induced by a sub-Boolean.} Let $B=B(A)$ be
the Boolean lattice on the set $A$ and let $x\in B$. As $x$ is a subset of $A$ we
can consider the Boolean $B(x)$ -- consisting of the subsets of $x$
ordered by inclusion -- and this is naturally a sub-poset of $B$ with
minimum $\0$ and maximum $x$. If $x=A\setminus\{a\}$ then $B(x)$ is
just the deletion $B_a$; if $x=\varnothing$ then $B(x)=\0$. 

If $F$ is a sheaf on $B$, then (up to choice of signage in
constructing the differential) the cellular complex $C_*(B(x);F)$ is a 
subcomplex of $C_*(B;F)$. Moreover, the quotient complex can be easily
described: it is a ``cellular like'' complex of $B\setminus
B(x)$. Specifically, let 
$$
C_k=\bigoplus_{y} F(y)
$$
the direct sum over the subsets $y$ of size $k+1$ such that
$y\not\leq x$. Define $d:C_k\ra C_{k-1}$ as before:
$d=\sum_{w< y}d_w^y$, where $y$ has one more element than $w$, but
where now both $w,y\not\leq x$. If 
$z\not\leq x$ has size $k-1$ and $z\leq y$, then the $y_1,y_2$ of size
$k$ with $z<
y_1,y_2< y$ are also such that $y_1,y_2\not\leq x$. It follows
from (\ref{eq:5}) that $d^2=0$. Write $C_*(B\setminus\kern-1pt B(x);F)$ for the
resulting complex. 

There is then a short exact sequence of cellular complexes
\begin{equation}
  \label{eq:6}
0\ra C_*(B(x);F)\ra C_*(B;F)\ra C_{*-1}(B\setminus\kern-1pt B(x);F)\ra 0  
\end{equation}
If $x=A\setminus\{a\}$ then $B\setminus\kern-1pt B(x)$ is the
restriction $B^a$, which is again a Boolean lattice.

\paragraph{The sequence induced by a short exact sequence of sheaves.}
Let $F$ and $G$ be sheaves on the Boolean $B=B(A)$ and $\kappa = \{\kappa_y\}:G\ra F$
a map 
of sheaves. Then there is an induced map $\kappa_*:C_*(B;G)\ra C_*(B;F)$
defined by 
$$
\kappa_*:s_y\mapsto \kappa_y(s_y)
$$
where $s_y\in C_k(B;G)$ has value $s\in F(y)$ in the coordinate
indexed by the $k$-subset $y$, and value $0$ elsewhere. 
Then
$\kappa_*$ is a chain map and moreover, the cellular chain complex
$C_*(B;\text{--})$ is an exact functor from the category of sheaves on
$B$ to the category
of chain complexes. Thus, a short exact sequence of sheaves
$$
0\ra G\ra F\ra H\ra 0
$$
induces a short exact sequence of cellular complexes
\begin{equation}
  \label{eq:2}
0\ra C_*(B;G)\ra C_*(B;F)\ra C_*(B;H)\ra 0.
\end{equation}

\begin{corollary}\label{cor:directsum}
Let $F$ and $G$ be sheaves on the Boolean lattice $B$. Then,
$$
\HC * (B;F\oplus G )\cong  \HC * (B;F ) \oplus \HC * (B;G )
$$
\end{corollary}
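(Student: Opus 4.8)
The plan is to deduce Corollary \ref{cor:directsum} directly from the exactness of the functor $C_*(B;\text{--})$ established in the paragraph preceding it. First I would observe that the two inclusions $G\hookrightarrow F\oplus G$ and the projection $F\oplus G\twoheadrightarrow F$ (together with their obvious partners) are morphisms of sheaves, built pointwise from the corresponding module-theoretic maps, so by the discussion of the sequence induced by a short exact sequence of sheaves, applying $C_*(B;\text{--})$ to the split short exact sequence of sheaves
$$
0\ra G\ra F\oplus G\ra F\ra 0
$$
yields a short exact sequence of cellular chain complexes
$$
0\ra C_*(B;G)\ra C_*(B;F\oplus G)\ra C_*(B;F)\ra 0.
$$

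Next I would note that this short exact sequence of complexes is itself split: the splitting $F\oplus G\ra G$ of sheaves induces, again by functoriality, a chain map $C_*(B;F\oplus G)\ra C_*(B;G)$ splitting the inclusion. Hence $C_*(B;F\oplus G)\cong C_*(B;F)\oplus C_*(B;G)$ as chain complexes. Taking homology, and using that homology commutes with finite direct sums of chain complexes, gives
$$
\HC{*}(B;F\oplus G)\cong \HC{*}(B;F)\oplus\HC{*}(B;G),
$$
which is the claim. Alternatively one could bypass the splitting and simply invoke the long exact sequence in homology attached to the short exact sequence of complexes above, together with the fact that the connecting homomorphism vanishes because the sequence is split; but the direct splitting argument is cleaner.

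There is really no serious obstacle here: the only point requiring a word of care is the verification that the pointwise direct-sum sheaf $F\oplus G$ (with structure maps $F^y_x\oplus G^y_x$, as defined in \S\ref{section1:sheaves}) genuinely fits into a split short exact sequence of sheaves, i.e. that the inclusion and projection are natural transformations — but this is immediate from the componentwise definition of the structure maps, since the relevant squares commute in each summand separately. One should also make sure the sign conventions $\ve^x_y$ used to define the differential are taken to be the same for all three complexes $C_*(B;G)$, $C_*(B;F\oplus G)$ and $C_*(B;F)$, which is harmless since the differential of $C_*(B;F\oplus G)$ is then literally $d^G\oplus d^F$ on each coordinate. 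With that in place the corollary follows in a couple of lines.
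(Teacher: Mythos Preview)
Your argument is correct and is exactly the approach the paper intends: the corollary is stated without proof immediately after the observation that $C_*(B;\text{--})$ is exact, so the intended justification is precisely the split short exact sequence argument you give. There is nothing to add.
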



\subsection{Fiddling with $\0$}
\label{section3:zeroes}

We saw in Lemma \ref{lemma:basicHSproperties} that a minimum
needs to be removed for sheaf homology to be meaningful. Corollary
\ref{theorem:sheafequalscellular} above transfers this requirement
to the Boolean cover. Nevertheless, it will turn out to be more
convenient to leave the miniumum \emph{in\/} when performing
calculations with Boolean covers. This section marries the two points of view.

\begin{proposition}
\label{section3:prop:minimums}
Let $B$ be a Boolean lattice and let $F$ be a sheaf on $B$.
Then $\HC i(B\setminus\0;F)\cong \HC{i+1}(B;F)$ for $i>0$,
and in low degrees there is an exact sequence 
$$
0\ra \HC 1(B;F)\ra \HC 0(B\setminus\0;F)\ra F(\0)\ra \HC 0(B;F)\ra 0
$$
\end{proposition}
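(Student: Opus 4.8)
The plan is to compare the cellular chain complex $C_*(B;F)$ with the cellular chain complex $C_*(B\setminus\0;F)$. By definition $C_k(B;F)=\bigoplus_{\rk(x)=k}F(x)$ with the sum over all subsets $x\subseteq A$, whereas $C_k(B\setminus\0;F)$ is the same sum but with the empty set omitted. Since $\0=\varnothing$ is the unique element of rank $0$, the only difference between the two complexes occurs in degree $0$, where $C_0(B;F)=F(\0)$ while $C_0(B\setminus\0;F)=0$. Concretely, $C_*(B\setminus\0;F)$ sits inside $C_*(B;F)$ as the subcomplex that agrees with it in all degrees $\geq 1$ and is zero in degree $0$; the quotient is the complex concentrated in degree $0$ with group $F(\0)$. (Strictly, one must be mildly careful about which complex is the sub and which the quotient: the natural chain map is the projection $C_*(B;F)\twoheadrightarrow F(\0)[0]$, with kernel $C_*(B\setminus\0;F)$ up to the degree-$1$ differential $\varepsilon$-signs, which by the final sentence of \S\ref{section3:basics} does not affect the isomorphism type of the homology.) The map $C_1(B;F)\to C_0(B;F)=F(\0)$ is, up to sign, $\sum_{a\in A}F^{\{a\}}_{\varnothing}\colon\bigoplus_{a\in A}F(a)\to F(\0)$.

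First I would write down the short exact sequence of chain complexes
\begin{equation}
0\ra C_*(B\setminus\0;F)\ra C_*(B;F)\ra F(\0)[0]\ra 0,
\end{equation}
where $F(\0)[0]$ denotes $F(\0)$ placed in homological degree $0$. Next I would take the associated long exact sequence in homology. Since $F(\0)[0]$ has homology $F(\0)$ in degree $0$ and $0$ elsewhere, the connecting maps force $\HC{i}(B\setminus\0;F)\xrightarrow{\ \cong\ }\HC{i}(B;F)$ for all $i\geq 2$, and then reindexing — using the original statement's shift — well, here one must be slightly careful: the statement claims $\HC{i}(B\setminus\0;F)\cong\HC{i+1}(B;F)$ for $i>0$, which is \emph{not} what the naive inclusion gives. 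So the short exact sequence above is the wrong one; instead I should observe that $C_*(B\setminus\0;F)$ is the quotient of $C_*(B;F)$ obtained by killing $C_0$, equivalently $C_*(B\setminus\0;F)$ is the complex $C_*(B;F)$ with $C_0$ replaced by $0$ \emph{and a degree shift by $1$}, so that its degree-$k$ group is $C_{k+1}(B;F)$. Then the correct short exact sequence is
\begin{equation}
0\ra F(\0)[0]\ra C_*(B;F)\ra \Sigma^{-1}C'_*\ra 0,
\end{equation}
where $C'_*$ is the truncation of $C_*(B;F)$ above degree $0$ and $\Sigma^{-1}$ is the downward shift, so that $\Sigma^{-1}C'_*=C_*(B\setminus\0;F)$. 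The long exact sequence of this reads
\begin{equation}
\cdots\ra\HC{i+1}(B;F)\ra\HC{i}(B\setminus\0;F)\ra 0\ra\HC{i}(B;F)\ra\HC{i-1}(B\setminus\0;F)\ra\cdots
\end{equation}
for $i\geq 2$, giving $\HC{i}(B\setminus\0;F)\cong\HC{i+1}(B;F)$ for $i\geq 2$; and the tail, using $H_0(F(\0)[0])=F(\0)$, $H_0(\Sigma^{-1}C'_*)=\HC 1(B;F)$ wait — no: $H_0(\Sigma^{-1}C'_*)=H_1(C'_*)$, and $H_1(C'_*)=\HC 1(B;F)$ since $C'_*$ agrees with $C_*(B;F)$ in degrees $\geq 1$. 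Hmm, I need $H_1(C'_*)$ versus $\HC 1(B;F)$: the differential out of degree $1$ is the same, but the differential \emph{into} degree $1$ from degree $2$ is the same too, and $C'_1=C_1$, so indeed $H_i(C'_*)=\HC i(B;F)$ for $i\geq 2$ and $H_1(C'_*)=\ker(C_1\to 0)/\im(C_2\to C_1)=C_1/\im\,d$, which differs from $\HC 1(B;F)=\ker(C_1\to C_0)/\im\,d$. So the low-degree terms of the long exact sequence assemble into precisely
$$
0\ra \HC 1(B;F)\ra \HC 0(B\setminus\0;F)\ra F(\0)\ra \HC 0(B;F)\ra 0,
$$
after identifying $\HC 0(B\setminus\0;F)=H_1(C'_*)=C_1/\im\,d$, the map to $F(\0)$ being induced by $d_1\colon C_1\to C_0$, its kernel being $\HC 1(B;F)$, and its cokernel being $\HC 0(B;F)=F(\0)/\im\,d_1$.

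The only genuinely delicate point — and the one I would flag as the main obstacle — is getting the bookkeeping of the degree shift and the sign conventions right, so that the connecting homomorphism of the long exact sequence is exactly the structure-map-induced $d_1\colon\bigoplus_a F(a)\to F(\0)$ and not its negative or a twist of it; by the closing remark of \S\ref{section3:basics} the homology is insensitive to the $\varepsilon^x_y$ choices, so up to isomorphism this causes no trouble, but I would state it carefully. Everything else is the formal long exact sequence of a short exact sequence of complexes together with the trivial observation that a complex concentrated in degree $0$ has homology only there. \qed
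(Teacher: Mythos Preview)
Your proposal is correct and, once the dust settles, is essentially the paper's own argument: the paper simply invokes the sub-Boolean short exact sequence (\ref{eq:6}) with $x=\0$, obtaining
\[
0\ra C_*(\0;F)\ra C_*(B;F)\ra C_{*-1}(B\setminus\0;F)\ra 0,
\]
and reads off the result from the associated long exact sequence. Your initial misstep stems from assuming $C_k(B\setminus\0;F)$ is graded by $\rk_B$; in the paper's conventions (see the discussion preceding (\ref{eq:6})) it is graded by $|y|-1$, i.e.\ by rank in $B\setminus\0$, which is exactly the degree shift you eventually build in by hand as $\Sigma^{-1}C'_*$. With that identification your second short exact sequence \emph{is} (\ref{eq:6}) for $x=\0$, and your low-degree analysis matches. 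One minor point: your long exact sequence already gives the isomorphism $\HC i(B\setminus\0;F)\cong\HC{i+1}(B;F)$ for all $i\geq 1$, not just $i\geq 2$, since $H_i(F(\0)[0])=0$ for $i\geq 1$.
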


\begin{proof}
If we take $x=\0$ in the sequence induced by a sub-Boolean 
in \S\ref{section3:SES} we get a short exact sequence 
\begin{equation*}
0\ra C_*(\0;F)\ra C_*(B;F)\ra C_{*-1}(B\setminus\kern-1pt \0;F)\ra 0.  
\end{equation*}
The result follows immediately from the associated long exact sequence.
 \qed
\end{proof}

Putting this together with Corollary \ref{theorem:sheafequalscellular}, we get the
sheaf homology $\HS *(L\setminus\0;F)$ in terms of the cellular homology
$\HC {*}(\widetilde{L};F)$:

\begin{proposition}
\label{section3:fiddlingwith0:sheaf}
If $L$ is a graded, atomic lattice then 
$\HS i (L\setminus\0;F) \cong \HC {i+1} (\widetilde{L};F)$
for $i>0$, 
and
$$
\dim \HS 0 (L\setminus\0;F) = \dim \HC {1} (\widetilde{L};F)   - \dim
\HC {0} (\widetilde{L};F) + \dim F(\0). 
$$ 
\end{proposition}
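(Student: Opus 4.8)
The plan is simply to compose the two isomorphisms already at our disposal. Corollary~\ref{theorem:sheafequalscellular} (Lusztig) identifies the sheaf homology $\HS *(L\setminus\0;F)$ with the cellular homology $\HC *(\widetilde{L}\setminus\0;F)$ of the Boolean cover with its minimum deleted, while Proposition~\ref{section3:prop:minimums}, applied to the Boolean lattice $B=\widetilde{L}$, relates $\HC *(\widetilde{L}\setminus\0;F)$ to $\HC *(\widetilde{L};F)$, in which the minimum is retained. Running these two facts in series gives everything, and the only real task is the bookkeeping of which homology has $\0$ in and which has it out.

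In degrees $i>0$ this is immediate: Corollary~\ref{theorem:sheafequalscellular} gives $\HS i(L\setminus\0;F)\cong\HC i(\widetilde{L}\setminus\0;F)$, and the first clause of Proposition~\ref{section3:prop:minimums} gives $\HC i(\widetilde{L}\setminus\0;F)\cong\HC{i+1}(\widetilde{L};F)$, so composing yields $\HS i(L\setminus\0;F)\cong\HC{i+1}(\widetilde{L};F)$. For $i=0$ the plan is to read a dimension count off the four-term exact sequence
$$
0\ra \HC 1(\widetilde{L};F)\ra \HC 0(\widetilde{L}\setminus\0;F)\ra F(\0)\ra \HC 0(\widetilde{L};F)\ra 0
$$
of Proposition~\ref{section3:prop:minimums}: since the alternating sum of dimensions along an exact sequence vanishes, $\dim\HC 0(\widetilde{L}\setminus\0;F) = \dim\HC 1(\widetilde{L};F)-\dim\HC 0(\widetilde{L};F)+\dim F(\0)$, and replacing $\HC 0(\widetilde{L}\setminus\0;F)$ by $\HS 0(L\setminus\0;F)$ via Corollary~\ref{theorem:sheafequalscellular} gives the stated equality.

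There is no substantive obstacle here; the mathematical content is entirely carried by the two cited results. The only points worth flagging are that the degree-$0$ clause is an equality of dimensions rather than a canonical isomorphism, since the exact sequence above need not split, and that throughout we are using the convention of the previous subsections whereby the induced sheaf on $\widetilde{L}$ is still written $F$.
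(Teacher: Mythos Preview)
Your proof is correct and follows essentially the same approach as the paper: both combine Corollary~\ref{theorem:sheafequalscellular} with Proposition~\ref{section3:prop:minimums}, composing the isomorphisms for $i>0$ and reading off the alternating-sum-of-dimensions identity from the four-term exact sequence for $i=0$. Your additional remark that the degree-zero statement is only a dimension equality (since the sequence need not split) is a nice clarification not made explicit in the paper.
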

\begin{proof}
For $i>0$ apply Corollary \ref{theorem:sheafequalscellular} and Proposition
\ref{section3:prop:minimums}. For degree zero, consider the low degree
short exact sequence of Proposition \ref{section3:prop:minimums}: 
$$
0\ra \HC 1(\widetilde{L};F)\ra \HC 0(\widetilde{L}\setminus\0;F)\ra
F(\0)\ra \HC 0(\widetilde{L};F)\ra 0 
$$
Now use Corollary \ref{theorem:sheafequalscellular} to replace $\HC
0(\widetilde{L}\setminus\0;F)$ by $\HS 0(L\setminus\0;F)$
and recall that for an exact sequence the alternating sum of dimensions
is zero. 
\qed
\end{proof}

\begin{corollary}
\label{cor:coverchar}
Let $\chi_{(L,F)}^\prime (t)$ be the derivative of the characteristic
polynomial of the pair $(L,F)$. Then the Euler characteristic of the
cellular homology of the Boolean cover is  
$$
\chi\HC * (\widetilde{L};F)  = \chiprime =  \sum_{x\in L} \mu_L(\0,x) \dim F(x).
$$
\end{corollary}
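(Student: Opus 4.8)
The plan is to read off the Euler characteristic from the cellular chain complex and then recognise the resulting alternating sum as a derivative evaluated at $1$. First I would recall from Corollary \ref{cor:derchi} (or directly from the proof of the Proposition preceding it) the formula $\chiprime = \sum_{x\in L}\mu_L(\0,x)\dim F(x)$; this already establishes the second equality in the statement, so the only content is the first equality $\chi\HC{*}(\widetilde{L};F) = \sum_{x\in L}\mu_L(\0,x)\dim F(x)$.

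For that, I would argue that the Euler characteristic of $\HC{*}(\widetilde{L};F)$ equals the alternating sum of the dimensions of the cellular chain groups, $\sum_{k\geq 0}(-1)^k\dim C_k(\widetilde{L};F)$. By definition $C_k(\widetilde{L};F)=\bigoplus_{\rk(y)=k}F(y)$, the sum over size-$k$ subsets $y$ of the atom set $A$, so
$$
\chi\HC{*}(\widetilde{L};F) = \sum_{y\in\widetilde{L}}(-1)^{\rk(y)}\dim F(y).
$$
Now I would invoke the identity $\mu_L(\0,x)=\sum_{y\in f^{-1}(x)}(-1)^{\rk(y)}$ used in the proof of the Proposition in \S\ref{section1:booleancovers} (there attributed to \cite{Orlik-Terao92}*{Lemma 2.35}), together with the fact that $\dim F(y)=\dim F(fy)$ for the induced sheaf on the Boolean cover. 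Partitioning the sum over $\widetilde{L}$ according to the fibres of $f:\widetilde{L}\ra L$ gives
$$
\sum_{y\in\widetilde{L}}(-1)^{\rk(y)}\dim F(y)
= \sum_{x\in L}\Bigl(\sum_{y\in f^{-1}(x)}(-1)^{\rk(y)}\Bigr)\dim F(x)
= \sum_{x\in L}\mu_L(\0,x)\dim F(x),
$$
which is exactly what is wanted. Finally, comparing with the derivative computation in Corollary \ref{cor:derchi} identifies this with $\chiprime$, completing the proof.

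I do not expect any real obstacle here: every ingredient — the Euler-characteristic-equals-alternating-sum-of-chain-dimensions principle, the explicit description of $C_k(\widetilde{L};F)$, the Möbius/fibre identity, and the formula for $\chiprime$ — is already available in the excerpt. The only point requiring a word of care is bookkeeping over whether the minimum $\0$ is included: here $\widetilde{L}$ is the full Boolean cover (with $\0$), and the $k=0$ term contributes $\dim F(\0)=\dim F(fy)$ for $y=\varnothing$, matching the $x=\0$ term $\mu_L(\0,\0)\dim F(\0)=\dim F(\0)$ on the right; so no correction term is needed, in contrast to the $L\setminus\0$ version in the earlier Proposition. Alternatively, one could deduce this corollary directly from Proposition \ref{section3:fiddlingwith0:sheaf} and Corollary \ref{cor:derchi} by a short alternating-sum argument, but the direct chain-level computation above is cleaner.
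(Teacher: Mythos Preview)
Your argument is correct, but it takes a different route from the paper's own proof. The paper argues via Proposition~\ref{section3:fiddlingwith0:sheaf}: that proposition gives $\chi\HC{*}(\widetilde{L};F)=\dim F(\0)-\chi\HS{*}(L\setminus\0;F)$ after a short alternating-sum rearrangement, and then Corollary~\ref{cor:derchi} turns the right-hand side into $\chiprime$. In other words, the paper passes through sheaf homology of $L\setminus\0$ and uses the comparison between the two theories. Your proof bypasses sheaf homology entirely: you work directly with the cellular chain complex of $\widetilde{L}$, read off the alternating sum $\sum_{y\in\widetilde{L}}(-1)^{\rk(y)}\dim F(y)$, and collapse it using the M\"obius/fibre identity $\mu_L(\0,x)=\sum_{y\in f^{-1}(x)}(-1)^{\rk(y)}$ from \S\ref{section1:booleancovers}. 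This is the same mechanism that proved $\chi_{(\widetilde{L},F)}(t)=\chi_{(L,F)}(t)$ there, now specialised by differentiating and setting $t=1$. Your approach is more self-contained and arguably cleaner, since it does not rely on Corollary~\ref{theorem:sheafequalscellular} or the low-degree exact sequence of Proposition~\ref{section3:prop:minimums}; the paper's route, on the other hand, makes the link between the cellular and sheaf Euler characteristics explicit. You correctly flag the paper's method as an alternative in your final paragraph, and your handling of the $\0$ bookkeeping is accurate.
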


\begin{proof} From the above and Corollary \ref{cor:derchi} we have
$$
\chi\HC * (\widetilde{L};F) = \dim F(\0) - \chi \HS
{*}(L\kern-1pt\setminus \kern-1pt \0; F)  = 
\chiprime  
$$

\end{proof}

\subsection{Splitting Booleans}
\label{section3:splittingbooleans}

An atom $a$ splits a Boolean $B$ into the deletion $B_a$ and the
resriction $B^a$, both of which are themselves Booleans of rank
$\rk(B)-1$. Proposition \ref{section3:prop:doubles} and Theorem
\ref{section3:theorem:decomposing:booleans}  below describe two
situations where such a splitting can give useful information about
the homology of $B$ itself. 


\paragraph{Doubling:} Let $F$ be a sheaf on $B$ for which there is an atom
$a\in A$ such that for all $x\in B_a$ the structure map 
\begin{equation}
  \label{eq:4}
F_{x}^{x\vee a} : F(x\vee a)   \ra F(x)
\end{equation}
is the identity. The restrictions of $F$ to $B_a$ and
$B^a$ are consequently exactly the same sheaf and so we call $(B;F)$ a
\emph{double\/} (see Figure \ref{fig:cellular:double}). 

\begin{figure}
\begin{tikzpicture}
\draw [white] (0,0)--(\textwidth,0); 
%
\begin{scope}[xshift=0mm,yshift=2.5mm]
\node at (7,2)
{\includegraphics[scale=0.65]{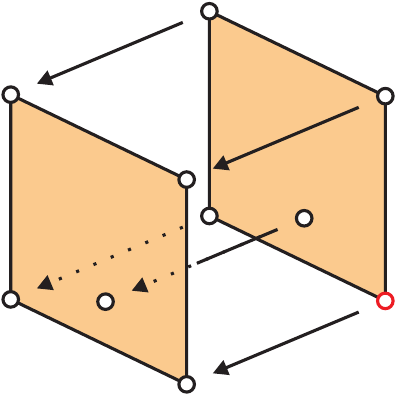}};
\node at (6.45,0.7){$F(x)$};
\node at (8.2,2){$F(x\vee a)$};
\node at (7.2,1.1){$\id$};
\node[color=red] at (9.05,0.6){$F(a)$};
\end{scope}
%
%
\end{tikzpicture}
\caption{A double.}
\label{fig:cellular:double}
\end{figure}

\begin{proposition}
\label{section3:prop:doubles}
Let $(B,F)$ be a double. Then $C_*(B;F)$ is acyclic, i.e.
$\HC{i}(B;F)=0$
for all $i$. 
\end{proposition}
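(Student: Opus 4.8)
The plan is to use the short exact sequence (\ref{eq:6}) induced by a sub-Boolean, applied with $x = A\setminus\{a\}$, so that $B(x) = B_a$ is the deletion and $B\setminus B(x) = B^a$ is the restriction. This gives a short exact sequence of cellular chain complexes
$$
0\ra C_*(B_a;F)\ra C_*(B;F)\ra C_{*-1}(B^a;F)\ra 0,
$$
and hence a long exact sequence in cellular homology relating $\HC{*}(B_a;F)$, $\HC{*}(B;F)$ and $\HC{*}(B^a;F)$. The strategy is then to identify the connecting homomorphism $\HC{i}(B^a;F)\ra\HC{i-1}(B_a;F)$ and show it is an isomorphism; acyclicity of $C_*(B;F)$ follows immediately from exactness of the long sequence.

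First I would set up notation: both $B_a = B(A\setminus\{a\})$ and $B^a$ (the subsets of $A$ containing $a$, which is itself a Boolean lattice on $A\setminus\{a\}$ via $y\mapsto y\setminus\{a\}$) are Booleans of rank $\rk(B)-1$, and the map $x\mapsto x\vee a = x\cup\{a\}$ is a poset isomorphism $B_a\xrightarrow{\ \sim\ }B^a$. The double hypothesis says precisely that under this identification the sheaves $F|_{B_a}$ and $F|_{B^a}$ agree: for $x\in B_a$, $\widetilde F(x) = F(x)$ and $\widetilde F(x\vee a) = F(x\vee a)$, and the structure maps match because (\ref{eq:4}) is the identity and the structure maps of $F$ are functorial. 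So as chain complexes $C_*(B^a;F)\cong C_*(B_a;F)$ (up to the sign bookkeeping, which by the last sentence of \S\ref{section3:basics} does not affect the isomorphism type).

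The key step — and the main obstacle — is to check that the connecting map of the long exact sequence, under this identification of $C_*(B^a;F)$ with $C_*(B_a;F)$, is (chain homotopic to) the identity, or at least a chain isomorphism. Concretely, the connecting homomorphism is computed by lifting a cycle in $C_{*-1}(B^a;F)$ to $C_*(B;F)$, applying $d$, and reading off the result in $C_*(B_a;F)$; because the lift of the generator indexed by $y\cup\{a\}$ can be taken to be that same generator in $C_*(B;F)$, applying $d$ and projecting onto the $B_a$-part picks out the structure map $F_{y}^{y\cup\{a\}}$, which is the identity by hypothesis (together with a sign that is consistent across all $y$). Thus the connecting map is, up to sign, the identity $\HC{*}(B_a;F)\ra\HC{*}(B_a;F)$, hence an isomorphism in every degree. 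Feeding this back into the long exact sequence
$$
\cdots\ra\HC{i}(B_a;F)\ra\HC{i}(B;F)\ra\HC{i-1}(B^a;F)\xrightarrow{\ \cong\ }\HC{i-1}(B_a;F)\ra\cdots
$$
forces $\HC{i}(B;F)=0$ for all $i$, as required. An alternative, perhaps cleaner, route I would keep in reserve: exhibit an explicit contracting homotopy on $C_*(B;F)$ directly, of the form $h(s_y) = s_{y\cup\{a\}}$ on generators with $a\notin y$ and $h(s_y)=0$ when $a\in y$, and verify $dh+hd = \id$ using that (\ref{eq:4}) is the identity and the sign identity (\ref{eq:5}); this bypasses the connecting-map computation entirely and is likely the shortest proof.
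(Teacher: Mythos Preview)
Your proposal is correct and follows essentially the same approach as the paper: apply the sub-Boolean short exact sequence with $x=A\setminus\{a\}$, pass to the long exact sequence, and show the connecting map $\HC{i}(B^a;F)\ra\HC{i}(B_a;F)$ is an isomorphism. The paper streamlines your sign discussion by first reordering $A$ so that $a$ comes first, which makes every $\ve_x^{x\vee a}=1$ and hence the connecting map is literally induced by the identity $C_*(B^a;F)\ra C_*(B_a;F)$; your contracting-homotopy alternative is not used in the paper but would also work.
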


\begin{proof}
Taking $x=A\setminus\{a\}$ in  (\ref{eq:6}), gives the short exact sequence
$$
0\ra C_*(B_a;F)\ra C_*(B;F)\ra C_{*-1}(B^a;F)\ra 0  
$$
from which there results a long exact sequence
$$
\cdots \ra \HC{i}(B^a;F)
\stackrel{\delta}{\ra} 
\HC{i}(B_a;F)\ra \HC{i}(B;F)\ra\HC{i-1}(B^a;F)
\stackrel{\delta}{\ra}  
\HC{i-1}(B_a;F)\ra\cdots
$$
Recall that the signs in the definition of the differential of
cellular homology required a choice of ordering on $A$. By reordering
if necessary we may place $a$ in first
position. It follows that the signs 
$\ve_{x}^{x\vee a}$ are equal to 1 for all $x\in B_a$ and consequently
the connecting homomorphism 
$\delta$ is the map in homology induced by the identity map
$\id: C_*(B^a;F)\ra C_*(B_a;F)$. Thus, $\delta$ is an isomorphism and the result
follows. 
 \qed
\end{proof}

\paragraph{Decomposing:} A small generalisation of the doubling idea gives a very useful
recursive procedure for computing cellular homology.
We will use it for example in \S\ref{section4:exterior} in the computation of 
$\HS{*}(L\setminus\0;\Lambda^\blob F)$. 
Let $F$ be a sheaf for which there is an atom
$a\in A$ such that for all $x\in B_a$ the structure map 
\begin{equation}
  \label{eq:3}
F_{x}^{x\vee a} : F(x\vee a)  \ra F(x)
\end{equation}
is injective. We will call such a sheaf
\emph{decomposable\/}. 

Let $F^a$ denote the restriction of $F$
to $B^a$ and $F_a$  the restriction  to 
$B_a$. 
Since $B_a=B^a$ we may also equip $B_a$ with the sheaf $F^a$: for $x\in B_a$ set 
$F^a(x)=F(x\vee a)$ and for $x\leq y$ define $F^a(x\leq y)=F^{y\vee a}_{x\vee a}$.
The maps $F^a(x)= F(x\vee a) \ra F(x) = F_a(x)$ define a morphism $F^a
\ra F_a$ of sheaves  on $B_a$, which is injective by condition
(\ref{eq:3}), and we will denote the quotient sheaf by $F_a/F^a$.  

\begin{theorem}
\label{section3:theorem:decomposing:booleans}
Let $B$ be Boolean and $F$ a decomposable sheaf on $B$.
Then
$$
\HC{*}(B;F)\cong\HC{*}(B_a;F_a/F^a)
$$
\end{theorem}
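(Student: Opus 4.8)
The strategy is to mimic the proof of Proposition \ref{section3:prop:doubles}, but feed the short exact sequence of sheaves $0\ra F^a\ra F_a\ra F_a/F^a\ra 0$ on $B_a$ into the bookkeeping. First I would take $x=A\setminus\{a\}$ in the sub-Boolean short exact sequence (\ref{eq:6}), giving
$$
0\ra C_*(B_a;F)\ra C_*(B;F)\ra C_{*-1}(B^a;F)\ra 0,
$$
and hence a long exact sequence relating $\HC{*}(B_a;F_a)$, $\HC{*}(B;F)$ and $\HC{*-1}(B^a;F^a)$ (renaming the restrictions $F_a,F^a$ as in the statement). As in the doubling proof, reorder $A$ so that $a$ comes first; then all the signs $\ve_x^{x\vee a}$ equal $1$, and the connecting homomorphism $\delta\colon\HC{i}(B^a;F^a)\ra\HC{i}(B_a;F_a)$ is precisely the map induced by the sheaf morphism $F^a\ra F_a$ on $B_a$ (under the identification $B^a=B_a$, $x\leftrightarrow x\vee a$). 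This is the one place the identification of $B^a$ with $B_a$, and the explicit description of $F^a$ as the sheaf $x\mapsto F(x\vee a)$, is used.

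Next I would invoke exactness of the functor $C_*(B_a;-)$ (the second short exact sequence of \S\ref{section3:SES}): from $0\ra F^a\ra F_a\ra F_a/F^a\ra 0$ we get a long exact sequence in $\HC{*}(B_a;-)$ whose connecting-map-free portion is exactly the sequence
$$
\cdots\ra\HC{i}(B_a;F^a)\ra\HC{i}(B_a;F_a)\ra\HC{i}(B_a;F_a/F^a)\ra\HC{i-1}(B_a;F^a)\ra\cdots.
$$
The point is that the map $\HC{i}(B_a;F^a)\ra\HC{i}(B_a;F_a)$ appearing here is the same map as the connecting homomorphism $\delta$ from the previous paragraph. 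Given this, a short diagram chase — or more slickly, splicing the two long exact sequences together — shows that $\HC{i}(B;F)$ is sandwiched between $\coker\bigl(\HC{i}(B_a;F^a)\ra\HC{i}(B_a;F_a)\bigr)$ and $\ker\bigl(\HC{i-1}(B_a;F^a)\ra\HC{i-1}(B_a;F_a)\bigr)$, and these are exactly the two terms that glue, via the $F_a/F^a$ long exact sequence, to give $\HC{i}(B_a;F_a/F^a)$. Hence $\HC{*}(B;F)\cong\HC{*}(B_a;F_a/F^a)$.

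A cleaner way to organise the same argument, which I would probably adopt to avoid fiddly chasing: at the level of chain complexes, the inclusion $C_*(B_a;F^a)\hookrightarrow C_*(B_a;F_a)$ is an honest subcomplex inclusion (it is $\kappa_*$ for the injective sheaf map $F^a\ra F_a$), with quotient $C_*(B_a;F_a/F^a)$. Meanwhile (\ref{eq:6}) with the first-position ordering of $a$ identifies $C_*(B;F)$ with the mapping cone of the \emph{identification} map $C_{*-1}(B^a;F^a)\ra C_{*-1}(B_a;F_a)$, i.e. with the cone of the inclusion $C_{*-1}(B_a;F^a)\hookrightarrow C_{*-1}(B_a;F_a)$. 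The cone of an injection of complexes is quasi-isomorphic to its cokernel, so $C_*(B;F)$ is quasi-isomorphic to $C_{*-1}(B_a;F_a/F^a)[\text{shift}]$ — but one must be careful that the two internal gradings match up, and in fact tracking the shift is what recovers $\HC{*}(B;F)\cong\HC{*}(B_a;F_a/F^a)$ with no degree shift (the shift from (\ref{eq:6}) and the shift from "cone $\simeq$ cokernel" cancel).

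\textbf{Main obstacle.} The one genuinely non-formal step is verifying that the connecting homomorphism $\delta$ of the long exact sequence from (\ref{eq:6}) coincides, under $B^a\cong B_a$, with the map $\HC{*}(B_a;F^a)\ra\HC{*}(B_a;F_a)$ induced by the sheaf inclusion $F^a\hookrightarrow F_a$ — rather than merely being "some" isomorphism-or-not. This requires unwinding the definition of the connecting map in (\ref{eq:6}): lift a cycle in $C_{*-1}(B^a;F^a)$ to $C_*(B;F)$, apply $d$, and read off the resulting cycle in $C_*(B_a;F_a)$; with $a$ in first position the signs $\ve_x^{x\vee a}=1$ make this bookkeeping collapse exactly to the structure maps $F_x^{x\vee a}$, which are by definition the components of $F^a\ra F_a$. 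Once that identification is in hand, everything else is the formal manipulation of the two long exact sequences (or the mapping-cone argument), and the degree shift is the only other thing to keep honest.
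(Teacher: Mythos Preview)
Your proof via the mapping cone argument is correct, but it takes a different route from the paper's. The paper defines an auxiliary sheaf $G$ on the whole Boolean $B$: set $G(x)=F^a(x)$ for $x\in B_a$ and $G(x\vee a)=F^a(x)$ for $x\vee a\in B^a$, with identity maps across. Then $(B;G)$ is a double, so Proposition~\ref{section3:prop:doubles} gives $\HC{*}(B;G)=0$; meanwhile the quotient sheaf $F/G$ is zero on $B^a$ and equals $F_a/F^a$ on $B_a$, so $\HC{*}(B;F/G)=\HC{*}(B_a;F_a/F^a)$. The long exact sequence of $0\to G\to F\to F/G\to 0$ then gives the result in one line. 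Your approach instead recognises $C_*(B;F)$ as the mapping cone of the inclusion $C_*(B_a;F^a)\hookrightarrow C_*(B_a;F_a)$ and invokes the general fact that the cone of a degreewise injection is quasi-isomorphic to its cokernel. The two arguments are secretly the same: the acyclic kernel you would meet in the cone argument (namely $\text{Cone}(\text{id}_{C_*(B_a;F^a)})$) is exactly the complex $C_*(B;G)$. The paper's packaging has the advantage of reusing Proposition~\ref{section3:prop:doubles} verbatim, keeping the argument inside the toolkit already built; yours has the advantage of being pure homological algebra with no new sheaf to invent.

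One caution about your first approach (before you pivot to cones): merely observing that $\HC{i}(B;F)$ and $\HC{i}(B_a;F_a/F^a)$ each sit in a short exact sequence $0\to\coker(\delta_i)\to ?\to\ker(\delta_{i-1})\to 0$ does not, over a general ring $R$, force them to be isomorphic, since extensions need not be unique. To make that version rigorous you must actually produce the comparison map (your $\pi$) and check the two long exact sequences are compatible with it, i.e.\ run the five-lemma. Your mapping cone paragraph does exactly this implicitly, so the proof as a whole is sound; just do not present the ``splicing'' as a standalone argument.
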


\begin{proof}
There is a short exact sequence of sheaves on $B$:
$$
\begin{tikzpicture}
\draw [white] (0,0)--(\textwidth,0); 
%
\node at (3,1.5)
{\includegraphics[scale=0.45]{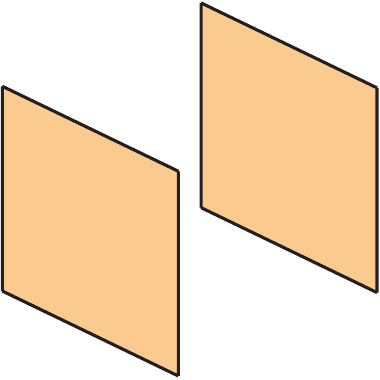}};
\node at (7,1.5)
{\includegraphics[scale=0.45]{fig1a}};
\node at (11,1.5)
{\includegraphics[scale=0.45]{fig1a}};
\node at (2.25,1.25){$F^a$};
\node at (3.75,1.9){$F^a$};
\node at (6.25,1.25){$F_a$};
\node at (7.75,1.9){$F^a$};
\node at (10.25,1.25){$F_a/F^a$};
\node at (11.75,1.9){$0$};
\node at (0.4,1.5){$0$};
\node at (13.6,1.5){$0$};
\node at (3.5,0.5){$G$};
\node at (7.5,0.5){$F$};
\draw[line width=.2mm,-Straight Barb](0.6,1.5)--(1.4,1.5);
\draw[line width=.2mm,-Straight Barb](4.6,1.5)--(5.4,1.5);
\draw[line width=.2mm,-Straight Barb](8.6,1.5)--(9.4,1.5);
\draw[line width=.2mm,-Straight Barb](12.6,1.5)--(13.4,1.5);
%
%
\end{tikzpicture}
$$
The structure maps between the elements of $B^a$ and $B_a$ in the leftmost
sheaf $G$ are all the identity; the middle sheaf is just $F$; the rightmost
sheaf is trivial on $B^a$ and the 
quotient sheaf $F_a/F^a$ on $B_a$. By (\ref{eq:2}) we have a short
exact sequence of cellular chain complexes with resulting long exact sequence:
$$
\cdots\ra\HC{i}(B;G)\ra\HC{i}(B;F)\ra\HC{i}(B_a;F_a/F^a)\ra\cdots
$$
after identifying the cellular homology of the rightmost sheaf with
$\HC{i}(B_a;F_a/F^a)$. But $(B;G)$ is a double (\ref{eq:4}), and so the
result follows from Proposition \ref{section3:prop:doubles}.
 \qed
\end{proof}


\subsection{The deletion-restriction long exact sequence}
\label{section3:LES}

Let $L$ be a geometric lattice and let $f:\widetilde{L}\ra L$ be its
Boolean cover. We will write $B=\widetilde{L}$.
If $a$ is an atom of $L$ then the Boolean cover $\widetilde{L_a}$ of the
deletion $L_a$  can be identified with the sub-Boolean $B_a$ of
$\widetilde{L}$. Under this identification $\widetilde{F_a}$ on
$\widetilde{L_a}$ is the restriction of $F$ (on $B$) to $B_a$. 
Consequently, we just write $F$ for the sheaf
on $\widetilde{L_a}$ induced by a sheaf $F$ on $L$, and
\begin{equation}
\label{eq:Bsuba}
\HC i (B_a;F) = \HC i (\widetilde{L_a};F).
\end{equation}
The Boolean cover $\widetilde{L^a}$ of the restriction $L^a$ is
not, however, the sub-Boolean $B^a$ of $B$: the rank of
the cover is in general less than that of the
sub-Boolean. Nevertheless, they have the same cellular homology.  If
$\widetilde{L^a}\ra L^a$ is the Boolean cover of the 
restriction we also just write $F$ for the sheaf induced on
$\widetilde{L^a}$ by the 
restriction of $F$ to $L^a$.


\begin{theorem}
  \label{section3:theorem:deletion.restriction.auxilary}
Let $L,\widetilde{L}$ and $a\in L$ be as above. Then, for all $i$, 
$$\HC{i}(B^a;F)\cong\HC{i}(\widetilde{L^a};F).$$ 
\end{theorem}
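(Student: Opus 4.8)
The plan is to reduce the statement to a purely combinatorial fact about Boolean lattices under a surjection of atom sets, and then to establish that fact by an induction that peels off one atom at a time using the sub-Boolean short exact sequence (\ref{eq:6}) together with the acyclicity of doubles (Proposition \ref{section3:prop:doubles}).

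First I would set up the reduction. Write $A$ for the atoms of $L$. The sub-Boolean $B^a\subseteq B=\widetilde L$ consists of the subsets of $A$ containing $a$, and $y\mapsto y\setminus\{a\}$ identifies it with $B(A\setminus\{a\})$; under this identification the sheaf $F$ on $B^a$ sends $z$ to $F\bigl(a\vee\bigvee_{b\in z}b\bigr)$. On the other hand, because $L$ is geometric the atoms of $L^a=L_{\geq a}$ are exactly the elements $a\vee b$ for $b\in A\setminus\{a\}$: each such element has rank $2$ in $L$ and hence covers $a$, while conversely any rank-two element above $a$ equals $a\vee b$ for some atom $b$. Thus $\widetilde{L^a}$ is the Boolean lattice on the set $\{a\vee b : b\in A\setminus\{a\}\}$, the assignment $\pi\colon b\mapsto a\vee b$ is a surjection of $A\setminus\{a\}$ onto that set, and the sheaf on $B(A\setminus\{a\})$ described above is precisely the pullback $\pi^*F$, where $(\pi^*F)(z)=F(\pi z)$ with structure maps pulled back. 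So it is enough to prove: for every surjection $\pi\colon X\twoheadrightarrow Y$ of finite sets and every sheaf $F$ on $B(Y)$ one has $\HC{*}(B(X);\pi^*F)\cong\HC{*}(B(Y);F)$.

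For this I would induct on $|X|-|Y|$. When $|X|=|Y|$ the map $\pi$ is a bijection and $\pi^*F\cong F$. When $|X|>|Y|$, choose a fiber $\pi^{-1}(y')$ with at least two elements and pick distinct $x_2,x'\in\pi^{-1}(y')$. Applying (\ref{eq:6}) to $B(X)$ with $x=X\setminus\{x_2\}$ gives a short exact sequence
$$
0\ra C_*(B(X\setminus\{x_2\});\pi^*F)\ra C_*(B(X);\pi^*F)\ra C_{*-1}(B(X)^{x_2};\pi^*F)\ra 0 ,
$$
whose third term is (a shift of) the cellular complex of the Boolean $B(X)^{x_2}=B(X)\setminus B(X\setminus\{x_2\})$. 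Identifying $B(X)^{x_2}$ with $B(X\setminus\{x_2\})$ via $y\mapsto y\setminus\{x_2\}$, its sheaf $G$ is given by $G(z)=F\bigl(\pi(z)\cup\{y'\}\bigr)$, with structure maps pulled back from $F$ along $z\mapsto\pi(z)\cup\{y'\}$. For any $z$ not containing the atom $x'$ we have $\pi(z\cup\{x'\})\cup\{y'\}=\pi(z)\cup\{y'\}$ since $\pi(x')=y'$, so the structure map $G_z^{z\vee x'}$ is the identity; hence $(B(X\setminus\{x_2\}),G)$ is a double in the sense of (\ref{eq:4}) and, by Proposition \ref{section3:prop:doubles}, $C_*(B(X\setminus\{x_2\});G)$ is acyclic. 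Therefore the third term of the sequence is acyclic, the first map is a quasi-isomorphism, and $\HC{*}(B(X);\pi^*F)\cong\HC{*}(B(X\setminus\{x_2\});\pi^*F)$. Since $\pi$ restricted to $X\setminus\{x_2\}$ is still onto $Y$ — only the fiber over $y'$ shrank, and it still contains $x'$ — with $|X\setminus\{x_2\}|-|Y|$ one smaller, the inductive hypothesis finishes the argument; unwinding the reduction then yields $\HC{i}(B^a;F)\cong\HC{i}(\widetilde{L^a};F)$.

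The step I expect to be the crux is recognising the quotient complex of (\ref{eq:6}) as the cellular complex of a double: this rests entirely on the identity $\pi(z\cup\{x'\})\cup\{y'\}=\pi(z)\cup\{y'\}$, which in turn needs a second atom $x'\ne x_2$ in the chosen fiber — available exactly because that fiber was chosen to have two or more elements. The remaining work is the routine bookkeeping of identifying the relevant sub-Booleans and their induced sheaves, and checking that surjectivity of $\pi$ survives the deletion of $x_2$. I would also stress that geometricity of $L$, although used only to pin down the atoms of $L^a$, is what makes $\pi$ surjective and hence $\widetilde{L^a}$ a genuine Boolean quotient of $B^a$; without it the two lattices would be unrelated.
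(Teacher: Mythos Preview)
Your proof is correct and follows essentially the same approach as the paper: both arguments delete one redundant atom at a time from $B^a$, using the sub-Boolean short exact sequence (\ref{eq:6}) and showing that the resulting restriction is a double (Proposition \ref{section3:prop:doubles}), until what remains is $\widetilde{L^a}$. Your reformulation as a general statement about surjections $\pi\colon X\twoheadrightarrow Y$ and pullback sheaves is a clean abstraction, but the underlying mechanism---pick two atoms in the same fibre, delete one, observe that the other makes the quotient a double---is exactly the paper's argument carried out inside $B^a$.
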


\begin{proof} 
Write $\B=B^a=(\widetilde{L})^a$. 
Let $A$ be the set of atoms of $L$ and let $A_a= A \setminus \{a\}$ be
the atoms of $L_a$.  The set of 
atoms of the restriction $L^a$ is $A^a=\{b\vee_L a:b\in A_a\}$.   
The atoms of the sub-Boolean
$\B$ are the elements $b\vee_B a$ where $b\in A_a$. Note that these are all distinct.

If the elements $b\vee_L a$ for $b\in A_a$ are all 
are distinct (in $L$ itself), then the Boolean cover $\widetilde{L^a}$ is precisely the
sub-Boolean $\B$, and the result
follows. 

Otherwise, there exist distinct atoms $s=a\vee_B b$ and $s'=a\vee_B b'$
of $\B$
that are mapped by $f$ to the same atom $a\vee_L b=a\vee_L b'$ of
$L^a$. As usual let $\B_s$ and $\B^s$ denote the deletion and
restriction of $\B$ with respect to the atom $s$. We claim that, for
all $i$,  
\begin{equation}
  \label{eq:7}
  \HC{i}(\B;F)\cong\HC{i}(\B_s;F)
\end{equation}
To prove this we will show  that $\B^s$ is a double. 
Let $\alpha = s\vee_B s^\prime = a \vee_B b \vee_B b^\prime$. This is
an atom of $\B^s$ and we may consider the deletion 
$(\B^s)_\alpha$ and the restriction $(\B^s)^\alpha$. Note that
$\alpha$ and $s$ are mapped by $f$ to the same element of $L^a$: 
$$
f(\alpha) = f(a \vee_B b \vee_B b^\prime) = a \vee_L b \vee_L b^\prime
= (a \vee_L b^\prime) \vee_L b = (a \vee_L b) \vee_L b =  
a \vee_L b = f(a \vee_B b) = f(s). 
$$  
Let $y\in (\B^s)_\alpha$. There is a corresponding element $y^\prime =
y \vee_B\alpha$ in $(\B^s)^\alpha$. We may write $ y= x\vee_B s$ for
some $x\in \B_s$ and since $s\vee_B \alpha = s\vee_B s\vee_B s^\prime
= s\vee_B s^\prime = \alpha$ we have  
$$ 
y^\prime = x \vee_B s \vee_B \alpha = x \vee_B \alpha.
$$
Applying $f$, while recaling that $f\alpha=fs$, gives
$$
f(y^\prime)  = f (x \vee_B \alpha) = fx \vee_L f\alpha =  fx \vee_L fs = f(x \vee_B s) = f(y).
$$
It follows from the definition of the induced sheaf on the Boolean
cover that the map $F^{y^\prime}_y$ is the identity. This shows that
$\B^s$ is a double with respect to the atom $\alpha$.  

To finish the proof of (\ref{eq:7}), we use Proposition
\ref{section3:prop:doubles} and the long exact sequence resulting from   
$$
0\ra
C_*(\B_s;F)\ra
C_*(\B;F)\ra
C_*(\B^s;F)\ra
0.
$$
We may now repeat this process by taking a sequence of deletions of
$\B$ until we arrive at $\widetilde{L^a}$. Courtesy of  
(\ref{eq:7}), the homology remains unchanged at each step, giving the required result. \qed  
\end{proof}

The previous result allows us to relate the cellular homology of the
Boolean cover of a lattice with the homology of the Boolean covers of
the restriction and deletion. 

\begin{theorem}[Deletion-Restriction Long Exact Sequence]
\label{section3:theorem:deletion.restriction}
Let $L$ be a geometric lattice equipped with a sheaf $F$ and let $f:\widetilde{L}\ra
L$ be its Boolean cover. Then for any atom $a\in L$ there is a long exact sequence
$$
\cdots
\ra
\HC{i}(\widetilde{L^a};F)
\ra
\HC{i}(\widetilde{L_a};F)
\ra
\HC{i}(\widetilde{L};F)
\ra
\HC{i-1}(\widetilde{L^a};F)
\ra
\HC{i-1}(\widetilde{L_a};F)
\ra
\cdots
$$
\end{theorem}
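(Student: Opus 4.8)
The plan is to derive the deletion--restriction long exact sequence from the short exact sequence of cellular chain complexes induced by a sub-Boolean, namely~(\ref{eq:6}), applied to the atom $a$ inside $B=\widetilde{L}$. Taking $x=A\setminus\{a\}$ in~(\ref{eq:6}) gives
$$
0\ra C_*(B_a;F)\ra C_*(B;F)\ra C_{*-1}(B^a;F)\ra 0,
$$
and the associated long exact sequence in homology reads
$$
\cdots\ra\HC{i}(B^a;F)\ra\HC{i}(B_a;F)\ra\HC{i}(B;F)\ra\HC{i-1}(B^a;F)\ra\cdots
$$
(the shift by one coming from the $C_{*-1}$ in the quotient term, exactly as in the proof of Proposition~\ref{section3:prop:doubles}). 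Here I am using the identifications established at the start of \S\ref{section3:LES}: $B_a=\widetilde{L_a}$ with its induced sheaf, so $\HC{i}(B_a;F)=\HC{i}(\widetilde{L_a};F)$, and $B$ itself is $\widetilde{L}$, so $\HC{i}(B;F)=\HC{i}(\widetilde{L};F)$.

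\medskip

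\textbf{The remaining ingredient} is to replace $\HC{i}(B^a;F)$ by $\HC{i}(\widetilde{L^a};F)$. This is precisely the content of Theorem~\ref{section3:theorem:deletion.restriction.auxilary}, which tells us $\HC{i}(B^a;F)\cong\HC{i}(\widetilde{L^a};F)$ for all $i$. One should check that this isomorphism is natural enough to be substituted into the long exact sequence --- i.e. that it is compatible with the maps in the sequence --- but since it was itself produced by a chain of long exact sequences each of whose connecting maps comes from an identity chain map (via the doubling argument), the composite isomorphism is induced by an actual chain homotopy equivalence $C_*(\widetilde{L^a};F)\to C_*(B^a;F)$, and hence splices into the long exact sequence without difficulty. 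Substituting then yields exactly the claimed sequence
$$
\cdots\ra\HC{i}(\widetilde{L^a};F)\ra\HC{i}(\widetilde{L_a};F)\ra\HC{i}(\widetilde{L};F)\ra\HC{i-1}(\widetilde{L^a};F)\ra\cdots
$$

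\medskip

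\textbf{The only genuine subtlety}, and where I would be most careful, is the geometricity hypothesis on $L$: it is needed to guarantee that the restriction $L^a$ really is the interval $L_{\geq a}$ and that $B^a=(\widetilde{L})^a$ is again a Boolean lattice (both facts used implicitly when invoking~(\ref{eq:6}) and Theorem~\ref{section3:theorem:deletion.restriction.auxilary}). In a general graded atomic lattice one would have to worry about whether $B\setminus B(A\setminus\{a\})$ is Boolean, but for $B$ itself a Boolean lattice (which it always is, being a Boolean cover) this is automatic --- the real use of geometricity is in the identification $\widetilde{L_a}\cong B_a$ and in Theorem~\ref{section3:theorem:deletion.restriction.auxilary}. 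So the proof is essentially a one-line invocation of~(\ref{eq:6}) followed by Theorem~\ref{section3:theorem:deletion.restriction.auxilary}; the work has all been front-loaded into the auxiliary theorem, whose doubling argument is the technical heart.
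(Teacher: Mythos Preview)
Your proof is correct and follows exactly the paper's approach: apply the short exact sequence~(\ref{eq:6}) with $x=A\setminus\{a\}$, take the associated long exact sequence, and then invoke~(\ref{eq:Bsuba}) and Theorem~\ref{section3:theorem:deletion.restriction.auxilary} to rewrite the $B_a$ and $B^a$ terms as $\widetilde{L_a}$ and $\widetilde{L^a}$. Your remark on the naturality of the isomorphism in Theorem~\ref{section3:theorem:deletion.restriction.auxilary} is a legitimate point that the paper itself does not make explicit; your discussion of where geometricity enters is slightly off (the identification $\widetilde{L_a}\cong B_a$ needs only that $L$ is graded atomic, not geometric --- the geometricity is used inside Theorem~\ref{section3:theorem:deletion.restriction.auxilary}), but this does not affect the argument.
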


\begin{proof}
If $A$ are the atoms of $L$ then we can use the short exact sequence
(\ref{eq:6}), induced by a sub-Boolean with 
$x=A\setminus\{a\}$, to get a short exact sequence
$$
0\ra C_*(B_a;F)\ra C_*(\widetilde{L};F)
\ra C_{*-1}(B^a;F)\ra 0  
$$
where, as above, $B=\widetilde{L}$. 
The result follows by applying Theorem
\ref{section3:theorem:deletion.restriction.auxilary} and (\ref{eq:Bsuba}) to the resulting
long exact sequence. 
 \qed
\end{proof}


\section{Sheaves on hyperplane arrangements}

We return to the arrangement lattices of \S\ref{section1:posets},
the natural sheaf and its exterior powers from
\S\ref{section1:sheaves}.  In \S\ref{section4:gradedeuler} we discuss
graded Euler characteristics and their computation in terms of
characteristic polynomials. \S\ref{section4:exterior} gives a complete
calculation of the homology of an essential hyperplane arrangement
with coefficients in the exterior natural sheaf.
\S\ref{section4:exterior2} extends this to the non-essential case.   

Throughout, $V$ is a finite dimensional vector
space over a field $k$ (initially arbitrary, then
restricted to a subfield of $\C$ in \S\ref{section4:exterior2}); $A$
is an arrangement 
in $V$ and $L=L(A)$ is the intersection lattice; $F$ is the
natural sheaf associated to $A$.


\subsection{Graded Euler characteristics}
\label{section4:gradedeuler}

For $F$ the natural sheaf on the arrangement lattice $L$ we have the
exterior sheaf $\Lambda^{\blob} F$. The graded Euler characteristic of
the cellular homology 
of $\widetilde{L}$ with coefficients in $\Lambda^{\blob} F$ turns out to
be very close to the characteristic polynomial of the arrangement
 lattice $L$.

\begin{proposition} 
\label{exterior:eulerchar:cellular}
The graded Euler chracteristic 
${\displaystyle \chi_q\HC * (\widetilde{L};\Lambda^\blob F) = \chi_L(1+q)}.$
\end{proposition}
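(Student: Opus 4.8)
The plan is to compute the graded Euler characteristic directly from the definition of $\chi_q$ and Corollary \ref{cor:coverchar}, then recognise the resulting sum as $\chi_L(1+q)$. First I would unpack the left-hand side using the formula
$$
\chi_q\HC * (\widetilde{L};\Lambda^\blob F)
=
\sum_j \chi\HC * (\widetilde{L};\Lambda^j F)\, q^j.
$$
By Corollary \ref{cor:coverchar}, each term is the ordinary Euler characteristic $\chi\HC * (\widetilde{L};\Lambda^j F) = \sum_{x\in L}\mu_L(\0,x)\dim\Lambda^j F(x)$. Since $\Lambda^j F$ is the pointwise $j$-th exterior power of $F$, we have $\dim\Lambda^j F(x) = \binom{\dim F(x)}{j}$, where $\dim F(x)=\dim(x)$ because $F$ is the natural sheaf.

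Next I would sum over $j$ and interchange the order of summation, so that
$$
\chi_q\HC * (\widetilde{L};\Lambda^\blob F)
=
\sum_{x\in L}\mu_L(\0,x)\sum_{j\geq 0}\binom{\dim(x)}{j}q^j
=
\sum_{x\in L}\mu_L(\0,x)(1+q)^{\dim(x)},
$$
using the binomial theorem on the inner sum (the sum over $j$ is finite since $\binom{\dim(x)}{j}=0$ for $j>\dim(x)$). The right-hand side is, by the definition of the characteristic polynomial $\chi_L(t)=\sum_{x\in L}\mu_L(\0,x)t^{\dim(x)}$ given in \S\ref{section1:characteristicpolynomial}, precisely $\chi_L(1+q)$. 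This completes the argument.

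I do not anticipate a genuine obstacle here: the only points needing a little care are that $\dim\Lambda^j F(x)=\binom{\dim F(x)}{j}$ holds pointwise (immediate from the constructions in \S\ref{section1:sheaves}, since exterior powers of sheaves are formed componentwise), and that interchanging the two summations is legitimate — both are finite sums, so there is nothing to check. One could alternatively phrase the whole computation more conceptually by noting that $\chi_q\HC *(\widetilde L;\Lambda^\blob F)$ equals the evaluation at $t=1+q$ of the characteristic polynomial of the pair $(L,F)$ (via the substitution $t^{\dim F(x)}\rightsquigarrow \sum_j\binom{\dim F(x)}{j}q^j=(1+q)^{\dim F(x)}$), but since $F$ is the natural sheaf $\chi_{(L,F)}(t)=\chi_L(t)$, so this reduces to the same statement.
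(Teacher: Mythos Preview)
Your proof is correct and follows essentially the same route as the paper: unpack the graded Euler characteristic, apply Corollary \ref{cor:coverchar} to each $\Lambda^j F$, use $\dim\Lambda^j F(x)=\binom{\dim x}{j}$, swap the sums, apply the binomial theorem, and recognise $\chi_L(1+q)$. The paper's argument is exactly this chain of equalities, just written more tersely.
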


\begin{proof}
This is a straight-forward calculation (recalling that $\dim F(x) = \dim x$):
\begin{eqnarray*}
\chi_q\HC * (\widetilde{L};\Lambda^\blob F) & = & \sum_k q^k  \chi
                                                  \HC *
                                                  (\widetilde{L};\Lambda^k
                                                    F)
  =   \sum_k q^k \sum_{x\in L} \mu_L(\0,x) \dim \Lambda^k F(x)
       \\ 
  & = &  \sum_k q^k \sum_{x\in L} \mu_L(\0,x) {\dim x \choose k}
    =   \sum_{x\in L} \mu_L(\0,x)  \sum_k q^k {\dim x \choose k} \\
   & = &  \sum_{x\in L} \mu_L(\0,x) (1+q)^{\dim x} = \chi_L(1+q)\\
\end{eqnarray*}
where we have used Corollary \ref{cor:coverchar} at the second equality.
\qed
\end{proof}

From this, another application of Corollary \ref{cor:coverchar}
gives the graded Euler characteristic for the sheaf homology:

\begin{corollary}
\label{exterior:eulerchar:sheaf}
\hspace{1em}
${\displaystyle \chi_q\HS * (L\kern-1pt\setminus \kern-1pt
  \0;\Lambda^\blob F) =
- \chi_L(1+q)+(1+q)^{\dim V}}.$
\end{corollary}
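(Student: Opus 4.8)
The plan is to derive this directly from Proposition \ref{exterior:eulerchar:cellular} together with the relationship between sheaf homology and the cellular homology of the Boolean cover established in Proposition \ref{section3:fiddlingwith0:sheaf}. The key observation is that the graded Euler characteristic is additive on the terms appearing in that comparison, so the difference between $\chi_q\HS{*}(L\setminus\0;\Lambda^\blob F)$ and $\chi_q\HC{*}(\widetilde{L};\Lambda^\blob F)$ is completely controlled by the $F(\0)$ correction term.

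First I would pass from Proposition \ref{section3:fiddlingwith0:sheaf} to a statement about the \emph{alternating} sum of dimensions. For each fixed exterior power $\Lambda^k F$, Proposition \ref{section3:fiddlingwith0:sheaf} gives $\HS{i}(L\setminus\0;\Lambda^k F)\cong \HC{i+1}(\widetilde{L};\Lambda^k F)$ for $i>0$, together with the low-degree relation
$$
\dim\HS{0}(L\setminus\0;\Lambda^k F) = \dim\HC{1}(\widetilde{L};\Lambda^k F) - \dim\HC{0}(\widetilde{L};\Lambda^k F) + \dim\Lambda^k F(\0).
$$
Taking the alternating sum over $i$ and using the shift by one in degree, one finds
$$
\chi\HS{*}(L\setminus\0;\Lambda^k F) = \dim\Lambda^k F(\0) - \chi\HC{*}(\widetilde{L};\Lambda^k F).
$$
(This is of course just the combination of Corollary \ref{cor:derchi} and Corollary \ref{cor:coverchar}, which one could alternatively cite directly.) Multiplying by $q^k$ and summing over $k$, and using that $\dim F(\0)=\dim V$ since $F$ is the natural sheaf, so that $\sum_k q^k\dim\Lambda^k F(\0) = \sum_k q^k\binom{\dim V}{k} = (1+q)^{\dim V}$, gives
$$
\chi_q\HS{*}(L\setminus\0;\Lambda^\blob F) = (1+q)^{\dim V} - \chi_q\HC{*}(\widetilde{L};\Lambda^\blob F).
$$
Substituting $\chi_q\HC{*}(\widetilde{L};\Lambda^\blob F) = \chi_L(1+q)$ from Proposition \ref{exterior:eulerchar:cellular} yields the claimed formula $\chi_q\HS{*}(L\setminus\0;\Lambda^\blob F) = -\chi_L(1+q) + (1+q)^{\dim V}$.

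There is no serious obstacle here; the only point requiring a little care is bookkeeping the degree shift $i\mapsto i+1$ when converting the isomorphisms of Proposition \ref{section3:fiddlingwith0:sheaf} into an identity of Euler characteristics, and noting that the low-degree four-term exact sequence contributes exactly the $\dim F(\0)$ term with the correct sign. Since the excerpt already records both halves of what is needed (Corollary \ref{cor:coverchar} packages the cellular side as $\chiprime$, and Corollary \ref{cor:derchi} records $\chi\HS{*}(L\setminus\0;F) = \dim F(\0) - \chiprime$), the cleanest write-up simply applies Corollary \ref{cor:coverchar} to each $\Lambda^k F$, multiplies by $q^k$, sums, and invokes Proposition \ref{exterior:eulerchar:cellular} — the whole argument is a one-line consequence once the graded sum is assembled.
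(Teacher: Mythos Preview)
Your proposal is correct and follows essentially the same route as the paper, which simply remarks that the result follows from Proposition \ref{exterior:eulerchar:cellular} together with Corollary \ref{cor:coverchar} (implicitly combined with Corollary \ref{cor:derchi}); you have just unpacked the degree-shift bookkeeping and the $(1+q)^{\dim V}$ contribution from $\Lambda^\blob F(\0)$ more explicitly than the paper bothers to.
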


\subsection{The exterior sheaf on an essential arrangement}
\label{section4:exterior}

We focus first on {\em essential} arrangements -- those for which the
intersection of all hyperplanes is trivial.

\begin{theorem}
\label{exterior:cellular:essential}
Let $L$ be the intersection lattice of an essential hyperplane
arrangement in a space $V$, let
$F$ be the natural sheaf on $L$ and $\Lambda^{j}F$ be the
$j$-th exterior power of $F$.
If $\rk(L)\geq 2$ and $\widetilde{L}\ra L$ is the Boolean cover of $L$, then
$\HC{i}(\widetilde{L};\Lambda^{j}F)$ is trivial unless $0\leq i<\rk(L)$
and 
$i+j=\rk(L)=\dim V$, in which case:
$$
\dim\HC{i}(\widetilde{L};\Lambda^{j}F)=
\frac{{(-1)^i}}{j!}
\chiprimek j
$$
where  $\chi^{({j})}_{L}(t)$ is the $j$-th derivative of the characteristic polynomial of $L$.
\end{theorem}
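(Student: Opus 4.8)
The plan is to induct on $\rk L$, using the deletion--restriction long exact sequence of Theorem~\ref{section3:theorem:deletion.restriction} as the engine, together with the decomposition of the exterior sheaf $\Lambda^j(F_a\oplus G)\cong\bigoplus_{s+t=j}\Lambda^tF^a\otimes\Lambda^sG$ recorded in \S\ref{section1:sheaves}. The base case $\rk L=2$ is a direct computation: the Boolean cover of a rank-$2$ essential lattice on $n$ atoms is easy to write down, the cellular complex is short, and one checks the claimed dimensions against the characteristic polynomial $\chi_L(t)=t^2-nt+(n-1)$ and its derivatives. (Alternatively one may fold the rank-$2$ case into the inductive step by isolating the smallest non-trivial situations explicitly.)

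For the inductive step, pick a \emph{dependent} atom $a\in L$, which exists because $L$ is not Boolean (see \S\ref{section1:posets}); then the deletion $L_a$ and the restriction $L^a$ both have strictly smaller rank, with $\rk L_a=\rk L$ only if $a$ is independent, so for a dependent atom $\rk L_a\le\rk L$ and $\rk L^a=\rk L-1$. Crucially $L_a$ is again the intersection lattice of an essential arrangement (deleting a dependent hyperplane does not change the ambient intersection), while $L^a$ is the intersection lattice of an essential arrangement in the space $a$ of dimension $\dim V-1$. The sheaf $\Lambda^jF$ restricts on the deletion side to $\Lambda^jF$ for the natural sheaf of $L_a$; on the restriction side, using that at each $x\ge a$ the value $F(x)$ already lives inside $F(a)$, one identifies the induced sheaf with $\Lambda^j$ of the natural sheaf of $L^a$. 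Feeding all three terms of
$$
\cdots\ra\HC{i}(\widetilde{L^a};\Lambda^jF)\ra\HC{i}(\widetilde{L_a};\Lambda^jF)\ra\HC{i}(\widetilde{L};\Lambda^jF)\ra\HC{i-1}(\widetilde{L^a};\Lambda^jF)\ra\cdots
$$
into the inductive hypothesis, both outer terms are concentrated in the single bidegree where the rank-plus-exterior-degree equals the (smaller) rank, so the middle term is forced to be concentrated in bidegree $i+j=\rk L$, and its dimension is governed by a single connecting-map exact sequence of vector spaces. Summing dimensions along that exact sequence, and invoking the deletion--restriction recursion $\chi_L(t)=\chi_{L_a}(t)-\chi_{L^a}(t)$ for the characteristic polynomial, differentiated $j$ times, yields exactly $\dim\HC{i}(\widetilde{L};\Lambda^jF)=\tfrac{(-1)^i}{j!}\chiprimek j$. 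One should also check the range $0\le i<\rk L$: since $j\ge 1$ is impossible to conclude directly, note that $\Lambda^0F=\Delta k$ gives the $j=0$ piece, which by Corollary~\ref{cor:coverchar} and the constant-sheaf computation contributes only in degree $i=\rk L$ with the sign $(-1)^{\rk L}$, matching $\tfrac{1}{0!}\chiprimek 0(1)=\chi_L(1)=0$ for essential arrangements of rank $\ge 1$ --- so in fact the $j=0$ homology vanishes, consistent with the statement forcing $0\le i<\rk L$.

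The main obstacle is the bookkeeping in the inductive step: one must verify that the deletion $L_a$, although essential, may \emph{drop} in rank (when $a$ is dependent $\rk L_a$ can equal $\rk L - 1$ or stay at $\rk L$ depending on whether $a$ is also independent --- for a strictly dependent atom in a geometric lattice $\rk L_a=\rk L$), so the inductive hypothesis must be applied with the correct rank on each side, and the bidegree arithmetic $i+j=\rk$ must be tracked separately for $L$, $L_a$ and $L^a$. A clean way to manage this is to observe that for a dependent atom $\widetilde{L_a}=B_a$ and $\rk L_a=\rk L$ (the deletion of a dependent hyperplane does not reduce the dimension of the intersection, but it also does not reduce the rank unless the arrangement was inessential), whereas $\rk L^a=\rk L-1$; then the two outer homologies sit in bidegrees summing to $\rk L$ and $\rk L-1$ respectively, and after re-indexing by the degree shift in the long exact sequence they align in the same total degree $\rk L$, so the middle term is squeezed into exactly one bidegree and the Euler-characteristic identity of Corollary~\ref{cor:coverchar} pins down its dimension. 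Once the concentration in a single bidegree is established, the dimension formula is purely the differentiated deletion--restriction recursion for $\chi_L$, which is routine.
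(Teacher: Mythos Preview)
Your proposal has a genuine structural gap: the induction variable is wrong. You induct on $\rk L$, but for a dependent atom $a$ one has $\rk L_a=\rk L$ (as you yourself note in the final paragraph, after saying the opposite earlier). Hence the inductive hypothesis does \emph{not} apply to the deletion $L_a$, and the argument stalls on the left-hand term of the long exact sequence. The paper inducts on the number of hyperplanes $|A|$: both $L_a$ and $L^a$ have strictly fewer atoms, so the hypothesis applies to each. This also forces a separate treatment of the Boolean case (no dependent atom exists), which the paper handles directly via Theorem~\ref{section3:theorem:decomposing:booleans}; your proposal omits this.

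A second, smaller point: once concentration in a single bidegree is established, the paper does not chase the connecting map or invoke the differentiated deletion--restriction identity. It simply reads off the dimension from the graded Euler characteristic $\chi_q\HC{*}(\widetilde{L};\Lambda^\blob F)=\chi_L(1+q)$ (Proposition~\ref{exterior:eulerchar:cellular}), which is computed independently and globally. Your route via the short exact sequence and $\chi_L=\chi_{L_a}-\chi_{L^a}$ would also work once concentration is known, but it requires knowing the induction applies to $L_a$ --- which, again, it does not under your rank induction.
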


\begin{figure}
\begin{tikzpicture}
\draw [white] (0,0)--(\textwidth,0); 
%
\node at (7,3.5)
{\includegraphics[scale=0.35]{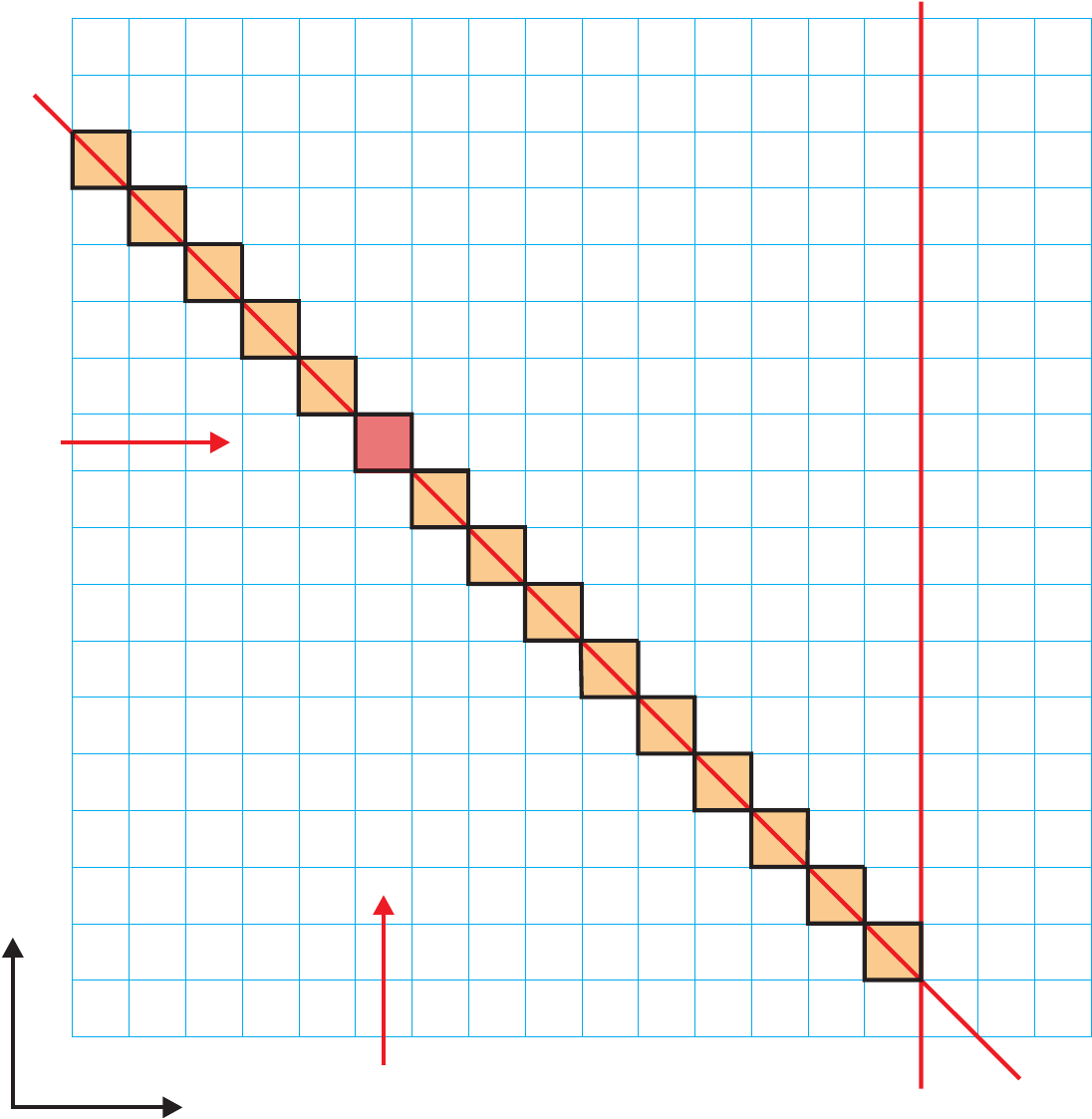}};
\node[color=red] at (3.8,4.2){$\scriptstyle{j}$};
\node[color=red] at (6,0.25){$\scriptstyle{i}$};
\node at (3.85,1.4){$\scriptstyle{j}$};
\node at (4.95,0.25){$\scriptstyle{i}$};
\node[color=red] at (9.6,3.5){$\scriptstyle{i\geq\rk(L)}$};
\node[color=red] at (10.5,0.65){$\scriptstyle{i+j=\rk(L)=\dim V}$};
%
%
\end{tikzpicture}
\caption{Support of $\HC i (\widetilde{L};\Lambda^jF)$ for $L$
essential in the
$(i,j)$-plane, with the dimension of the red square given in Theorem
\ref{exterior:cellular:essential}.}
\label{fig:cellular:exterior:support2}
\end{figure}

The support of $\HC{i}(\widetilde{L};\Lambda^{j}F)$ is shown in Figure 
\ref{fig:cellular:exterior:support2}.
The remainder of
the section is devoted to the proof of Theorem 
\ref{exterior:cellular:essential}, which is broken down into several subparts:
 
\paragraph{The proof of Theorem \ref{exterior:cellular:essential}
  when $L$ itself 
  is Boolean.} We prove the result where $L$ itself is Boolean (and hence
$\widetilde{L}=L$) separately from the general case. The
characteristic polynomial is given by
$$
\chi_L(t)=(t-1)^{\rk(L)}
$$
and we require:

\begin{proposition}
\label{exterior:cellular:booleans}
Let $L$ be a Boolean intersection lattice of an essential hyperplane
arrangement in a space $V$, 
let $F$ be the natural sheaf on $L$ and $\Lambda^{j}F$ be the
$j$-th exterior power of $F$. If $\rk(L)\geq 1$ then
$$
\dim \HC{i}(L;\Lambda^{j}F)
= \begin{cases} 
1 & \text{ if $i=0$ and $j=\rk(L)$ }\\
0 & \text{ else.}
\end{cases}
$$
\end{proposition}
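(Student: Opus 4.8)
The plan is to compute the cellular chain complex $C_*(L;\Lambda^j F)$ directly and read off its homology. Since $L$ is the intersection lattice of an \emph{essential} arrangement and is moreover Boolean of rank $n:=\rk L$, we have $\dim V=n$, and the defining linear forms of the $n$ atoms $a_1,\dots,a_n$ must be linearly independent (because $\codim\bigcap_{i\in S}a_i=\rk_L(\bigcap_{i\in S}a_i)=|S|$ for every $S\subseteq[n]$, in particular for $S=[n]$). After a change of coordinates I would therefore assume $V=k^n$ and $a_i=\{x_i=0\}$, so that every $x\in L$ is $\bigcap_{i\in S}a_i$ for a unique $S\subseteq[n]$ with $\rk_L(x)=|S|$, the natural sheaf is $F(x)=V_S:=\operatorname{span}\{e_i:i\notin S\}$, and its structure maps $F^{S}_{S\setminus\{i\}}$ are the inclusions $V_S\hookrightarrow V_{S\setminus\{i\}}$. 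Consequently $\Lambda^j F(x)=\Lambda^j V_S$ has basis the wedges $e_T:=e_{t_1}\wedge\cdots\wedge e_{t_j}$ indexed by $j$-subsets $T\subseteq[n]\setminus S$, and the structure maps send each $e_T$ to $e_T$.

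The key observation I would then make is that the cellular differential only ever deletes an element of $S$ and leaves $T$ fixed, so $C_*(L;\Lambda^j F)$ splits as a direct sum $\bigoplus_{|T|=j}C^{(T)}_*$ over $j$-subsets $T\subseteq[n]$, where $C^{(T)}_k$ has basis the size-$k$ subsets $S\subseteq[n]\setminus T$ and the differential removes one element of $S$ with sign $\varepsilon^{S}_{S\setminus\{i\}}$. Writing $U=[n]\setminus T$ (of size $n-j$), I would identify $C^{(T)}_*$ with the \emph{augmented} simplicial chain complex of the full simplex on the vertex set $U$: a size-$k$ subset $S$ plays the role of a $(k-1)$-simplex and $S=\varnothing$ is the augmentation term, sitting in the summand indexed by the lattice element $\0=V$. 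Ordering the atoms $a_1<\cdots<a_n$ makes the sign $\varepsilon^{S}_{S\setminus\{i\}}$ reproduce the usual simplicial boundary sign, and since cellular homology is independent of the sign assignment (\S\ref{section3:basics}) this identification is on the nose. A simplex is contractible, so $C^{(T)}_*$ is acyclic whenever $U\neq\varnothing$, i.e.\ whenever $j<n$; when $j=n$ the only surviving $T$ is $[n]$, $U=\varnothing$, and $C^{([n])}_*$ is just $k$ concentrated in homological degree $0$ (from $\Lambda^n V\cong k$) with zero differential; when $j>n$ there is no such $T$ and the complex is zero.

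Assembling the summands gives $\HC{i}(L;\Lambda^j F)=\bigoplus_{|T|=j}H_i(C^{(T)}_*)$, which vanishes unless $i=0$ and $j=n=\rk L$, in which case it is one-dimensional — exactly the claim. The only steps needing genuine care are: (i) the reduction to the coordinate arrangement, i.e.\ that a rank-$n$ Boolean lattice realized essentially in an $n$-dimensional space forces the defining forms to be a basis; and (ii) the bookkeeping in identifying $C^{(T)}_*$ with an augmented simplicial chain complex — getting the homological grading right and correctly accounting for the $S=\varnothing$ term, which is what produces the single surviving class in the top case. (Alternatively one could induct on $\rk L$ using the decomposition theorem, Theorem~\ref{section3:theorem:decomposing:booleans}: $\Lambda^j F$ is decomposable since its structure maps are exterior powers of the injections $V_{S\cup\{i\}}\hookrightarrow V_S$, and the quotient sheaf $(\Lambda^j F)_a/(\Lambda^j F)^a$ is, up to a fixed line, $\Lambda^{j-1}$ of the natural sheaf on the restriction $L^a$ — itself an essential Boolean arrangement of rank $\rk L-1$ — so one trades $(\rk L,j)$ for $(\rk L-1,j-1)$, the base case $\rk L=1$ being immediate.)
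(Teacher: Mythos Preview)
Your argument is correct. The reduction to the coordinate arrangement is sound, and the splitting $C_*(L;\Lambda^j F)\cong\bigoplus_{|T|=j}C^{(T)}_*$ goes through because the structure maps send each basis wedge $e_T$ to itself; each summand is then (up to a valid sign assignment on $B(U)$, hence an isomorphism of complexes) the shifted augmented simplicial chain complex of the full simplex on $U=[n]\setminus T$, which is acyclic whenever $U\neq\varnothing$. The edge cases $j=n$ and $j>n$ are handled exactly as you say.

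This is, however, a genuinely different route from the paper's. The paper proves the proposition by induction on $\rk L$ via the decomposition machinery of Theorem~\ref{section3:theorem:decomposing:booleans}: one checks that $\Lambda^jF$ is decomposable with respect to any atom $a$, identifies the quotient sheaf $(\Lambda^jF)_a/(\Lambda^jF)^a$ with $\Lambda^{j-1}F^a$ on the Boolean $L_a\cong L^a$ of rank one less, and invokes the inductive hypothesis --- exactly the alternative you sketch in your final parenthetical. Your primary argument bypasses that machinery entirely and is more elementary and self-contained (essentially recognising the complex as a Koszul/augmented-simplex complex). The paper's approach, by contrast, exercises the decomposition theorem in the simplest nontrivial setting, which fits the narrative of \S\ref{section3:splittingbooleans} and parallels the inductive strategy used later for the non-Boolean case. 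Both are short; yours is arguably cleaner for this particular statement, while the paper's is more in keeping with the tools it has just built.
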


\begin{proof}
The proof is an induction on the rank of $L$. The case where $\rk(L)=1$
(and so we have a single hyperplane the 
trivial space in a $1$-dimensional $V$) can be verified by brute
force. Otherwise, for $\rk(L)\geq 2$,
there is a basis $Z=\{v_1,\ldots,v_n\}$ for $V$ such that $L$ is the
lattice of subsets of $Z$ ordered via reverse inclusion, with the
subset corresponding to $x\in L$ giving a basis for $F(x)$. 
If $a\in A$ is a hyperplane with basis 
$\{v_1,\ldots,\hat{v}_i,\ldots,v_n\}$, then for $x\in L_a$
the space $F(x\vee a)$ has basis some subset
$\{u_1,\ldots,u_m\}$ of $Z\setminus \{v_i\}$ and $F(x)$ has basis
$\{v_i,u_1,\ldots,u_m\}$.

A basis vector $u_{i_1}\wedge\cdots\wedge u_{i_j}$ of $\Lambda^jF(x)$
may or may not contain the element $v_i$, leading to a decomposition 
\begin{equation}\label{eq:lambda}
\Lambda^jF(x) \cong \Lambda^{j-1}F(x\vee a) \oplus \Lambda^jF(x\vee a).
\end{equation}
Writing $G=\Lambda^j F$ for $j\geq 1$ (and similarly $G_a$ and $G^a$) the structure maps
$$G_{x}^{x\vee a} \colon G(x\vee a) = \Lambda^j F (x\vee a) \ra
\Lambda^j F(x) = G(x)$$ are the obvious inclusions, so $G$ is
decomposable  in the sense of 
\S\ref{section3:splittingbooleans}. Moreover, the isomorphism
(\ref{eq:lambda}) leads to an isomorphism of sheaves 
\begin{equation}
  \label{eq:12}
G_a/G^a\cong \Lambda^{j-1}F^a    
\end{equation}
(some care is needed in checking what happens when $j$ is close to
$\dim x$, as some of the spaces become $0$). Theorem
\ref{section3:theorem:decomposing:booleans} thus gives
\begin{equation}
  \label{eq:10}
\HC{*}(L;\Lambda^jF)\cong\HC{*}(L_a;\Lambda^{j-1}F^a)
\end{equation}
with $F^a$ the natural sheaf of the essential arrangement $A^a$ on
$L_a\cong L^a$ (both are Boolean of one smaller rank than $L$). Hence, by induction
$$
\dim \HC{i}(L;\Lambda^jF) = \dim \HC{i}(L_a;\Lambda^{j-1}F^a) = 
\begin{cases}
1 & \text{ if $i=0$ and $j-1=\rk(L) -1$ }\\
0 & \text{ else.}
\end{cases}
$$
\qed
\end{proof}

Suppose now that $L$ is not Boolean. We argue by induction on the
number $|A|$ of hyperplanes. Throughout, if $j>\rk(L)=\dim V$ then the
sheaf $\Lambda^j F=0$; we thus need only consider  $j$ in the range
$0\leq j\leq \rk(L)=\dim V$.
When $|A|=1$ or $2$, the intersection
lattice $L(A)$ is Boolean of rank $|A|$, and so these cases have been
handled already. 

\paragraph{The base case $|A|=3$.} We saw in
\S\ref{section1:posets} that the only non-Boolean $L$ on three
hyperplanes is realised by a braid arrangement, and with $L$ 
isomorphic to the partition lattice $\Pi(3)$. When essential, the
arrangement lives in a $2$-dimensional $V$ with basis $\{v_1,v_2\}$
and consists of the lines spanned by $v_1,v_2$ and $-v_1-v_2$. 

The characteristic polynomial (see \cite{Stanley12}*{\S 3.10.4}) is
$
\chi_{L}(t)=(t-1)(t-2)
$
and Theorem \ref{exterior:cellular:essential} states:\\
\hspace*{1cm} $j=0$: $\dim\HC{i}(\widetilde{L};\Lambda^{0}F)= 0$ for all $i$, \\
\hspace*{1cm} $j=1$: $\dim\HC{i}(\widetilde{L};\Lambda^{1}F)=
\begin{cases}
1 & \text{ if $i=1$}\\
0 & \text{ else,}
\end{cases} $\\
\hspace*{1cm} $j=2$: $\dim\HC{i}(\widetilde{L};\Lambda^{2}F)=
\begin{cases}
1 & \text{ if $i=0$}\\
0 & \text{ else.}
\end{cases} $\\
To prove this, each case is treated separately. For $j=0$ the sheaf is
constant $\Lambda^0F=\Delta k$ and so the induced sheaf on the Boolean
cover is also constant and by applying Theorem
\ref{section3:theorem:decomposing:booleans} we have 
$$
\HC{i}(\widetilde{L};\Delta k)=
\HC{i}(\widetilde{L}_a;\text{zero sheaf})=0,
\text{ for all }i.
$$
 For $j=1$, we have the natural sheaf and the induced sheaf 
on the Boolean cover $\widetilde{L}$ has constant value $F(\1)$ on all
the elements of ranks $2$ and $3$.
Two applications of Theorem
\ref{section3:theorem:decomposing:booleans} give the required
result. For $j=2$, the sheaf 
$\Lambda^2F$ is trivial except at $\0\in L$ where it is
$1$-dimensional with basis $v_1\wedge v_2$, once again 
in happy agreement with Theorem \ref{exterior:cellular:essential}. 

\paragraph{The vanishing degrees in the general case $|A|>3$.} 
We may assume that $L$ is non-Boolean and so, by the results of
\S\ref{section1:posets}, 
it has a dependent atom $a\in A$. 

The deletion $L_a$ is then an essential arrangement lattice having $|A|-1$
hyperplanes and $\rk(L_a)=\rk(L)$, by the dependence of $a$. The sheaf
$F_a$, which is just $F$ restricted to $L_a$, is the natural sheaf
of this arrangement, and the restriction of $\Lambda^j F$ to
$L_a$ is just $\Lambda^j F_a$. 
The
restriction $L^a$ is an essential arrangement lattice having at most $|A|-1$
hyperplanes and $\rk(L^a)=\rk(L)-1$. The sheaf
$F^a$ is the natural sheaf
of this arrangement and $\Lambda^j F^a$ is the restriction of
$\Lambda^j F$ to $L^a$.

Both $L_a$ and $L^a$ are
either Boolean, or essential arrangement lattices on fewer than $|A|$
hyperplanes, hence 
come under the auspices of the inductive
hypothesis. The deletion-restriction long exact sequence,
Theorem \ref{section3:theorem:deletion.restriction}, gives
$$
\cdots
\ra
\HC{i}(\widetilde{L_a};\Lambda^j F_a)
\ra
\HC{i}(\widetilde{L};\Lambda^j F)
\ra
\HC{i-1}(\widetilde{L}^a;\Lambda^j F^a)
\ra
\cdots
$$
If $i\not=\rk(L)-j$ or $i=\rk(L)$,  then both the left and right terms
vanish, hence by induction we 
get $\HC{i}(\widetilde{L};\Lambda^j F)=0$ as required.

\paragraph{The non-vanishing degree in the general case $|A|>3$.}

For fixed $j$ we have shown that there is only one non-trivial group
among the $\HC{i}(\widetilde{L};\Lambda^j F)$, namely when $i=
\rk(L)- j$. This reduces the task to an Euler characteristic
computation. We have 
$$
\chi \HC{*}(\widetilde{L};\Lambda^j F) = \sum_n (-1)^n \dim
\HC{n}(\widetilde{L};\Lambda^j F) = (-1)^{\rk(L)-j}\dim
\HC{\rk(L)-j}(\widetilde{L};\Lambda^j F). 
$$
From this we get
$$
\chi_q\HC * (\widetilde{L};\Lambda^\blob F)  =  \sum_j q^j  \chi\HC *
(\widetilde{L};\Lambda^j F) 
  =  \sum_j q^j  (-1)^{\rk(L)-j}\dim \HC{\rk(L)-j}(\widetilde{L};\Lambda^j F).
$$
Thus, for $i= \rk(L)- j$,
\begin{eqnarray*}
\dim \HC{i}(\widetilde{L};\Lambda^j F) &= &(-1)^i \times
                                            (\text{coefficient of
                                            $q^j$ in $\chi_q\HC *
                                            (\widetilde{L};\Lambda^\blob
                                            F)$})\\ 
& = & (-1)^i \times (\text{coefficient of $q^j$ in $\chi_L(1+q)$})
\end{eqnarray*}
where the last equality is due to Proposition
\ref{exterior:eulerchar:cellular}. The Taylor expansion of (the
polynomial) $\chi_L(1+q)$ immediately reveals the coefficient of $q^j$
in $\chi_L(1+q)$ to be $\frac{1}{j!}\;\chiprimek j$ giving 
$$
\dim \HC{i}(\widetilde{L};\Lambda^j F) = \frac{(-1)^i}{j!}\chiprimek j .
$$


This completes the proof of Theorem \ref{exterior:cellular:essential}
computing the cellular homology. Our real interest is in the sheaf
homology, which we can now compute by applying Proposition
\ref{section3:fiddlingwith0:sheaf} to 
Theorem \ref{exterior:cellular:essential}.

\begin{theorem}
\label{exterior:sheaf-essential}
Let $L$ be the intersection lattice of an essential hyperplane
arrangement in a space $V$. Let
$F$ be the natural sheaf on $L$ and $\Lambda^{j}F$ be the
$j$-th exterior power of $F$.
If $\rk(L)\geq 2$ then 
$\HS i (L\kern-1pt\setminus\kern-1pt\0;\Lambda^{j}F)$ is
trivial unless: \\ 
\hspace*{1cm}-- either
$0<i<\rk(L)-1$ and $i+ j = \rk(L) - 1$,
in which case
$$ 
\dim\HS {i}
(L\kern-1pt\setminus\kern-1pt\0;\Lambda^{j}F) =
\frac{ (-1)^{i+1}}{j!} \chiprimek j
$$
\hspace*{1cm} -- or, $i=0$ and  or $j= \rk(L) -1$, in which case
$$
\dim\HS {0}
(L\kern-1pt\setminus\kern-1pt\0;\Lambda^{j}F) =
 \binom{\rk(L)}{j}
- 
\frac{1}{j!}
\chiprimek j
 $$
\hspace*{1cm} -- or, $i=0$ and $j<\rk(L) -1$, in which case 
$$
\dim\HS {0}
(L\kern-1pt\setminus\kern-1pt\0;\Lambda^{j}F) =
\binom{\rk(L)}{j}
 $$
 where  $\chi^{({j})}_{L}(t)$ is the $j$-th derivative of the
 characteristic polynomial of $L$. 
\end{theorem}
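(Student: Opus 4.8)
The plan is to obtain this statement as a formal consequence of the cellular computation Theorem~\ref{exterior:cellular:essential} via the dictionary of Proposition~\ref{section3:fiddlingwith0:sheaf}, which converts $\HC{*}(\widetilde L;-)$ into $\HS{*}(L\setminus\0;-)$. There is no new geometric input: the work is a short case analysis over the three degree ranges, and in each of them we only have to locate the single nonzero cellular group $\HC{\rk L-j}(\widetilde L;\Lambda^jF)$.

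First I would pin down the contribution of the minimum. Essentiality gives $\0=V$ and $\rk L=\dim V$, so $\dim\Lambda^jF(\0)=\binom{\dim V}{j}=\binom{\rk L}{j}$. I would also record the one arithmetic coincidence we use: $\chi_L(t)$ is monic of degree $\dim V$, so $\chi^{(\rk L)}_L(1)=(\rk L)!$ and hence $\tfrac1{j!}\chiprimek{j}=1=\binom{\rk L}{j}$ precisely when $j=\rk L$; this is what will make the $j=\rk L$ column vanish entirely and keep it out of the list of surviving cases.

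Next, for $i>0$, Proposition~\ref{section3:fiddlingwith0:sheaf} yields $\HS i(L\setminus\0;\Lambda^jF)\cong\HC{i+1}(\widetilde L;\Lambda^jF)$, and Theorem~\ref{exterior:cellular:essential} says the right-hand side is trivial unless $0\le i+1<\rk L$ and $(i+1)+j=\rk L$, i.e. unless $0<i<\rk L-1$ and $i+j=\rk L-1$, where its dimension is $\tfrac{(-1)^{i+1}}{j!}\chiprimek{j}$. In particular every $\HS i$ with $i\ge\rk L-1$ vanishes. That is the first bullet.

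Finally, for $i=0$, Proposition~\ref{section3:fiddlingwith0:sheaf} gives
\[
\dim\HS 0(L\setminus\0;\Lambda^jF)=\dim\HC 1(\widetilde L;\Lambda^jF)-\dim\HC 0(\widetilde L;\Lambda^jF)+\binom{\rk L}{j},
\]
and by Theorem~\ref{exterior:cellular:essential} the first term is nonzero only for $j=\rk L-1$ (value $-\tfrac1{j!}\chiprimek{j}$) and the second only for $j=\rk L$ (value $1$). Splitting on $j$ then produces $0-1+1=0$ when $j=\rk L$, $\binom{\rk L}{j}-\tfrac1{j!}\chiprimek{j}$ when $j=\rk L-1$, and $\binom{\rk L}{j}$ when $j<\rk L-1$; assembled with the $i>0$ computation these are exactly the three displayed formulas. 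The only step that needs any care is the $j=\rk L$ cancellation flagged in the second paragraph; everything else is direct substitution into Proposition~\ref{section3:fiddlingwith0:sheaf}.
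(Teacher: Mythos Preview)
Your proposal is correct and follows exactly the approach the paper indicates: the paper simply says the theorem is obtained ``by applying Proposition~\ref{section3:fiddlingwith0:sheaf} to Theorem~\ref{exterior:cellular:essential}'' without spelling out the case analysis, and your argument carries out precisely that translation. Your extra observation that $\chi_L^{(\rk L)}(1)=(\rk L)!$ forces the $j=\rk L$ column to cancel is a nice detail the paper leaves implicit.
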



\subsection{The exterior sheaf for a non-essential arrangement}
\label{section4:exterior2}
In this section we assume our field to be a sub-field of $\C$ and so
we may assume that the vector space $V$ comes equipped with an inner
product $\langle - , - \rangle$. Let $L$ be the intersection lattice
of a hyperplane 
arrangement in a space $V$ with $U=\bigcap_{a\in A} a$. Let
$F$ be the natural sheaf on $L$.

For a subspace $B\subset V$ such that $U\subset B \subset V$ we define
the orthogonal complement of $U$ in $B$ to be 
$$
U^{\perp B} = \{ b\in B \mid \langle b,u\rangle = 0,\text{ for all }u\in U\}.
$$
 Note that $U^{\perp B}$ is a subspace of $B$ and 
 $B= U \oplus U^{\perp B}$. (This last condition may fail in finite
 characteristic.) Moreover, if $U\subset B\subset B^\prime \subset V$,
 there is an inclusion  $ \iota\colon U^{\perp B} \subset U^{\perp
   B^\prime}$ and with respect to the decompositions $B= U \oplus
 U^{\perp B}$ and $B^\prime= U \oplus U^{\perp B^\prime}$ the
 inclusion $B\subset B^\prime$ decomposes as $1\oplus \iota$.

We define a sheaf $F^\perp$ on $L$ as follows. For $x\in L$ we set
$F^\perp(x) = U^{\perp F(x)}$. If $x\leq y$ then the structure map
$F^\perp(x\leq y)$ is the inclusion $ F^\perp(y) = U^{\perp F(y)}
\subset U^{\perp F(x)} = F^\perp(x)$ induced by the inclusion  $F(y)
\subset F(x)$.  

We have:
 
\begin{lemma} There is a direct sum decomposition of sheaves $ F =
  \Delta U \oplus F^\perp$.  
\end{lemma}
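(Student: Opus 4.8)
The plan is to verify the direct sum decomposition at the level of stalks and then check that it is compatible with the structure maps, i.e. that it is a decomposition of functors $L \to {}_R\mathbf{Mod}$ (here $R=k$ a subfield of $\C$).

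First I would fix $x \in L$. By definition $F(x)$ is the subspace of $V$ corresponding to $x$, and it satisfies $U \subseteq F(x) \subseteq V$ since $U=\bigcap_{a\in A}a$ is the top element $\1$ (contained in everything under reverse inclusion). The preamble to the lemma already records that $F(x) = U \oplus U^{\perp F(x)} = \Delta U(x) \oplus F^\perp(x)$, so the stalkwise statement is immediate from the existence of orthogonal complements in an inner product space. This is where the hypothesis $k \subseteq \C$ is used: over a field of positive characteristic the inner product may be degenerate on $U$ and the sum need not be direct.

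Next I would check naturality. Suppose $x \leq y$ in $L$, i.e. $F(y) \subseteq F(x)$ with $U \subseteq F(y) \subseteq F(x) \subseteq V$. By the remark in the preamble (applied with $B = F(y)$, $B' = F(x)$), the inclusion $F(y) \hookrightarrow F(x)$ decomposes, with respect to $F(y) = U \oplus U^{\perp F(y)}$ and $F(x) = U \oplus U^{\perp F(x)}$, as $\id_U \oplus \iota$ where $\iota\colon U^{\perp F(y)} \hookrightarrow U^{\perp F(x)}$. But $\id_U$ is exactly the structure map $(\Delta U)^y_x$ of the constant sheaf, and $\iota$ is exactly the structure map $(F^\perp)^y_x$ by the definition of $F^\perp$ given just before the lemma. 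Hence $F^y_x = (\Delta U)^y_x \oplus (F^\perp)^y_x$, which says precisely that the stalkwise isomorphisms $F(x) \cong \Delta U(x) \oplus F^\perp(x)$ assemble into an isomorphism of sheaves $F \cong \Delta U \oplus F^\perp$, using the pointwise description of $\oplus$ of sheaves from \S\ref{section1:sheaves}.

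The only real content is the compatibility of the splitting with the inclusions, and that has been pre-packaged in the paragraph preceding the lemma; so there is no genuine obstacle here beyond unwinding definitions. If one wanted to be fully careful, the single point worth spelling out is that $U = \1 \in L$ so that $U \subseteq F(x)$ for every $x$, which is what makes $U^{\perp F(x)}$ and hence $F^\perp$ well-defined on all of $L$; and that the identification is canonical (given the fixed inner product), so no coherence checks beyond the single square $x \leq y$ are needed.
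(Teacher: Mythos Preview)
Your proof is correct and is exactly the argument the paper has in mind: the paper does not give a separate proof of this lemma, having already recorded in the preceding paragraph both the stalkwise decomposition $B = U \oplus U^{\perp B}$ and the compatibility $1 \oplus \iota$ of inclusions with it, so that the lemma follows by unwinding definitions just as you do.
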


By definition $F^\perp$ is a sub-sheaf of $F$ on $L$, but it is also
the natural sheaf of an essential hyperplane arrangement in $U^{\perp
  V}$. There is one hyperplane $U^{\perp H}$ for each hyperplane $H$
of the original arrangement and courtesy of the relation 
$
U^{\perp (B\cap C)} = U^{\perp B} \cap U^{\perp C}
$
we see that the lattice of this new arrangement is again $L$. 
The natural sheaf on the new arrangement is precisely $F^\perp$, seen
immediately from the definition of $F^\perp$. We refer to $F^\perp$ as
the {\em essentialisation} of $F$. 

We will continue to write $\chi_L(t)$ for the characteristic
polynomial of $L$ equipped with the natural sheaf $F$, that is to say, 
$ \chi_L(t) = \sum \mu_L(\0, x)t^{\dim x}$. Writing
$\chi_{(L,F^\perp)}(t)$ for the characteristic polynomial of the
essentialisation we easily see 
$$
\chi_L(t) = t^{\dim U} \chi_{(L,F^\perp)}(t).
$$

 Let $\Lambda^{j}F$ be the
$j$-th exterior power of $F$ (the natural sheaf).
Theorem \ref{exterior:cellular-non-essential} below gives the cellular homology
$\HC{i}(\widetilde{L};\Lambda^{j}F)$
and the support in the $(i,j)$-plane is illustrated in Figure
\ref{fig:cellular:exterior:support}. 

\begin{theorem}
\label{exterior:cellular-non-essential}
Let $L$ be the intersection lattice of a hyperplane
arrangement in a space $V$ with $U=\bigcap_{a\in A} a$. Let
$F$ be the natural sheaf on $L$ and $\Lambda^{j}F$ be the
$j$-th exterior power of $F$.
If $\rk(L)\geq 2$ and $\widetilde{L}\ra L$ is the Boolean cover of $L$, then
$\HC{i}(\widetilde{L};\Lambda^{j}F)$ is trivial unless 
$0\leq i<\rk(L)$ and $\rk(L)\leq i+j\leq \dim V$, in which case:
$$
\dim\HC{i}(\widetilde{L};\Lambda^{j}F)=
\frac{ (-1)^i}{({\rk(L) - i})!}
\binom{\dim U}{i+j-\rk(L)}\;
\chiprimeperpk {\rk(L) - i} .
$$
with $\chi^{({k})}_{(L,F^\perp)}(t)$ the $k$-th
derivative of the characteristic polynomial of the essentialisation of
$L$. 
\end{theorem}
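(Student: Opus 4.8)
The plan is to reduce the non-essential statement to the essential case, Theorem~\ref{exterior:cellular:essential}, using the splitting $F=\Delta U\oplus F^\perp$ of the Lemma above together with the fact that exterior powers and cellular homology both commute with direct sums.

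First I would expand $\Lambda^j F$ using the two facts recorded in \S\ref{section1:sheaves}, namely $\Lambda^s(\Delta U)=\Delta(\Lambda^s U)$ and $\Lambda^j(G\oplus H)\cong\bigoplus_{s+t=j}\Lambda^s G\otimes\Lambda^t H$ for sheaves of modules. This gives
$$
\Lambda^j F\ \cong\ \bigoplus_{s+t=j}\Delta(\Lambda^s U)\otimes\Lambda^t F^\perp .
$$
Since $\Lambda^s U$ is a fixed vector space of dimension $\binom{\dim U}{s}$, tensoring any sheaf with the constant sheaf $\Delta(\Lambda^s U)$ merely produces $\binom{\dim U}{s}$ copies of it, with structure maps the diagonal sums of the originals; so $\Delta(\Lambda^s U)\otimes\Lambda^t F^\perp$ is isomorphic, as a sheaf, to the direct sum of $\binom{\dim U}{s}$ copies of $\Lambda^t F^\perp$. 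Applying Corollary~\ref{cor:directsum} (repeatedly, for the finite sum) then yields
$$
\HC{i}(\widetilde{L};\Lambda^j F)\ \cong\ \bigoplus_{s+t=j}\HC{i}(\widetilde{L};\Lambda^t F^\perp)^{\oplus\binom{\dim U}{s}} .
$$

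Next I would feed in the essential case. As noted just before the statement, $F^\perp$ is the natural sheaf of an \emph{essential} arrangement whose intersection lattice is again $L$, of rank $\rk L\geq 2$, and whose characteristic polynomial is $\chi_{(L,F^\perp)}(t)$. Hence Theorem~\ref{exterior:cellular:essential}, applied to this arrangement, says $\HC{i}(\widetilde{L};\Lambda^t F^\perp)$ is trivial unless $0\leq i<\rk L$ and $i+t=\rk L$, in which case it has dimension $\frac{(-1)^i}{t!}\chiprimeperpk{t}$. Therefore at most one term in the sum over $s+t=j$ is nonzero, namely the one with $t=\rk L-i$ and $s=i+j-\rk L$; nonvanishing of the multiplicity $\binom{\dim U}{s}$ then forces $0\leq s\leq\dim U$, i.e. $\rk L\leq i+j\leq \rk L+\dim U=\dim V$, together with $0\leq i<\rk L$. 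Substituting $t=\rk L-i$, $s=i+j-\rk L$ and multiplying the dimension $\frac{(-1)^i}{(\rk L-i)!}\chiprimeperpk{\rk L-i}$ by $\binom{\dim U}{i+j-\rk L}$ gives precisely the asserted formula and support.

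The argument is essentially formal, and I do not anticipate a substantive obstacle. The points requiring care are: verifying that $\Delta(\Lambda^s U)\otimes\Lambda^t F^\perp$ really is a direct sum of copies of $\Lambda^t F^\perp$ \emph{as a sheaf} (checking the structure maps match), and the bookkeeping of ranges — in particular that the surviving index $t=\rk L-i$ always lies in $\{1,\dots,\rk L\}$, which is automatic from $0\leq i<\rk L$, so that Theorem~\ref{exterior:cellular:essential} genuinely applies, and that the boundary summands in which $\Lambda^s U=0$ or $\Lambda^t F^\perp=0$ are harmlessly absorbed into the vanishing.
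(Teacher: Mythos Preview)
Your proposal is correct and follows essentially the same route as the paper's proof: decompose $\Lambda^j F$ via $F=\Delta U\oplus F^\perp$, split the cellular homology over the resulting summands using Corollary~\ref{cor:directsum}, and then invoke Theorem~\ref{exterior:cellular:essential} for the essential sheaf $F^\perp$ to identify the single surviving term and its dimension. The only cosmetic difference is that where you argue directly that $\Delta(\Lambda^s U)\otimes\Lambda^t F^\perp$ is a sheaf direct sum of $\binom{\dim U}{s}$ copies of $\Lambda^t F^\perp$, the paper phrases this step as an appeal to the universal coefficient theorem in characteristic zero; your formulation is slightly more elementary but the content is the same.
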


\begin{figure}
\begin{tikzpicture}
\draw [white] (0,0)--(\textwidth,0); 
%
\node at (7,3.5)
{\includegraphics[scale=0.35]{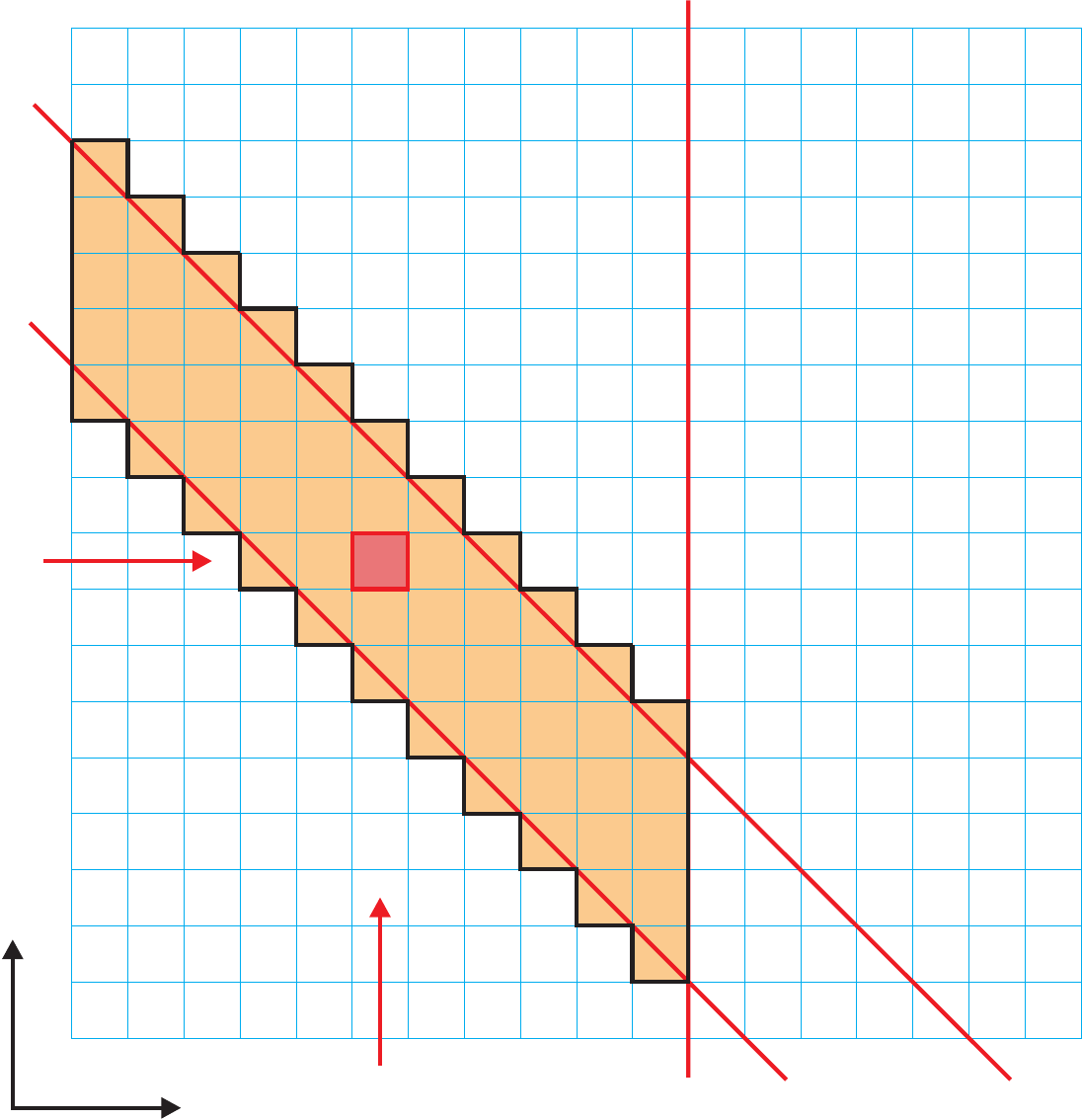}};
\node[color=red] at (3.8,3.5){$\scriptstyle{j}$};
\node[color=red] at (6,0.25){$\scriptstyle{i}$};
\node at (3.85,1.4){$\scriptstyle{j}$};
\node at (4.95,0.25){$\scriptstyle{i}$};
\node[color=red] at (8.25,3.5){$\scriptstyle{i\geq\rk(L)}$};
\node[color=red] at (10.15,0.65){$\scriptstyle{i+j=\dim V}$};
\node[color=red] at (8.75,0.65){$\scriptstyle{i+j=\rk(L)}$};
%
%
\end{tikzpicture}
\caption{Support of $\HC i (\widetilde{L};\Lambda^jF)$ in the
$(i,j)$-plane, with the dimension of the red square given by Theorem
\ref{exterior:cellular-non-essential}.}
\label{fig:cellular:exterior:support}
\end{figure}

\begin{proof}
The decomposition $F = \Delta U \oplus F^\perp$ allows us to write
$$
\Lambda^j F = \Lambda^j (\Delta U \oplus F^\perp) = \bigoplus_{s+t=j}
\Lambda^s \Delta U \otimes \Lambda^tF^\perp 
=  \bigoplus_{s+t=j} \Delta \Lambda^s U \otimes \Lambda^tF^\perp.
$$
Applying, Corollary \ref{cor:directsum} gives
$$
\HC i (\widetilde{L}; \Lambda^j F) =  \bigoplus_{s+t=j} \HC i
(\widetilde{L}; \Delta \Lambda^s U \otimes \Lambda^tF^\perp). 
$$
Recalling that we are working in characteristic zero, the universal
coefficient theorem tells us that 
$$
 \HC * (\widetilde{L}; \Delta \Lambda^s U \otimes \Lambda^tF^\perp)
 \cong   \Lambda^s U \otimes \HC * (\widetilde{L};  \Lambda^tF^\perp)  
$$
and we have
\begin{equation}
\label{eq:dimdecomp}
\dim\HC i (\widetilde{L}; \Delta \Lambda^s U \otimes \Lambda^tF^\perp) = 
\binom{\dim U}{s}\dim \HC i (\widetilde{L};  \Lambda^tF^\perp).
\end{equation}
The dimension on the right can be computed from Theorem
\ref{exterior:cellular:essential} because $F^\perp$ is essential:  
we have that $ \dim  \HC i (\widetilde{L};  \Lambda^tF^\perp) =0$
unless $t=\rk(L) -i$, in which case 
$$ 
\dim (\HC i (\widetilde{L};  \Lambda^{\rk(L) -i}F^\perp) =
\frac{(-1)^i}{(\rk(L) -i)!} \chiprimeperpk {\rk(L) - i}. 
$$ 
Since $s+t=j$ we have $s= i+j-\rk(L)$ which means given $i,j$ the
values of $s$ and $t$ must be taken to be $t=\rk(L) -i$ and $s= i+j-\rk(L)$  and so we get 
$$
\dim \HC i (\widetilde{L}; \Lambda^j F) = \frac{ (-1)^i}{({\rk(L) - i})!}
\binom{\dim U}{i+j-\rk(L)}\;
\chiprimeperpk {\rk(L) - i} .
$$
What remains is to find the values of $i$ and $j$ for which this
computation is valid. The conditions are (i) $0\leq i < \rk(L)$ (in
order to be able to apply Theorem \ref{exterior:cellular:essential}),
(ii) $s\leq \dim U$ (otherwise $\Lambda^s U$ is trivial) and (iii)
$t\leq j$ (since $s+t = j$ in the sum above). Condition (i) is seen in
the statement of the theorem; since $\dim U = \dim V - \rk(L)$,
condition (ii) implies $i+j\leq \dim V$; condition (iii) implies
$\rk(L)\leq i+j$.
So conditions  (ii) and (iii) together give the other
condition in the statement of the theorem, namely $ \rk(L) \leq i+j
\leq \dim V$. \qed 
\end{proof}

As in the essential case, we can convert this into a result about 
sheaf homology. As before we write $\chi^{({k})}_{(L,F^\perp)}(t)$ for
the $k$-th derivative of the characteristic polynomial of the
essentialisation of $L$. 

\begin{theorem}
\label{exterior:sheaf-non-essential}
Let $L$ be the intersection lattice of a hyperplane
arrangement in a space $V$ with $U=\bigcap_{a\in A} a$. Let
$F$ be the natural sheaf on $L$ and $\Lambda^{j}F$ be the
$j$-th exterior power of $F$.
If $\rk(L)\geq 2$ then 
$\HS i (L\kern-1pt\setminus\kern-1pt\0;\Lambda^{j}F)$ is
trivial unless: \\ 
\hspace*{1cm}-- either
$0<i<\rk(L)-1$ and $\rk(L)\leq i+j+1\leq \dim V$,
in which case
$$ 
\dim\HS {i}
(L\kern-1pt\setminus\kern-1pt\0;\Lambda^{j}F) =
\frac{ (-1)^{i+1}}{({\rk(L) - i - 1})!}
\binom{\dim U}{i+1+j-\rk(L)}\;
\chiprimeperpk {\rk(L) - i - 1}
$$
\hspace*{1cm}-- or, $i=0$ and $\rk(L)\leq j< \dim V$, in which case 
$\dim\HS {0}
(L\kern-1pt\setminus\kern-1pt\0;\Lambda^{j}F)$ equals
$$
\binom{\dim V}{j}
- 
\frac{1}{{\rk(L)}!}
\binom{\dim U}{j-\rk(L)}\;
\chiprimeperpk {\rk(L)}
   -
  \frac{1}{({\rk(L)-1})!}
\binom{\dim U}{j+1-\rk(L)}\;
\chiprimeperpk {\rk(L) -1}
 $$
\hspace*{1cm}-- or, $i=0$ and $j= \rk(L) -1$, in which case 
$$
\dim\HS {0}
(L\kern-1pt\setminus\kern-1pt\0;\Lambda^{\rk(L) -1}F) =
\binom{\dim V}{\rk(L)-1}
   -
  \frac{1}{({\rk(L)-1})!}\;
  \chiprimek {\rk(L) -1}
$$
\hspace*{1cm}-- or, $i=0$ and $j< \rk(L) -1$, in which case 
$$
\dim\HS {0}
(L\kern-1pt\setminus\kern-1pt\0;\Lambda^{j}F) =
\binom{\dim V}{j}.
$$
\end{theorem}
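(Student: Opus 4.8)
The plan is to deduce Theorem~\ref{exterior:sheaf-non-essential} from the cellular computation of Theorem~\ref{exterior:cellular-non-essential} in exactly the way Theorem~\ref{exterior:sheaf-essential} was deduced from Theorem~\ref{exterior:cellular:essential}: by feeding it through the ``fiddling with $\0$'' dictionary of Proposition~\ref{section3:fiddlingwith0:sheaf}. That dictionary gives $\HS i(L\setminus\0;\Lambda^j F)\cong\HC{i+1}(\widetilde{L};\Lambda^j F)$ for $i>0$, and in degree zero
$$
\dim\HS 0(L\setminus\0;\Lambda^j F)=\dim\HC 1(\widetilde{L};\Lambda^j F)-\dim\HC 0(\widetilde{L};\Lambda^j F)+\dim\Lambda^j F(\0).
$$
Since $\Lambda^j F(\0)=\Lambda^j V$, I would record at the outset that the last term is $\binom{\dim V}{j}$; everything else is then a substitution of the formula of Theorem~\ref{exterior:cellular-non-essential} followed by a chase of its support conditions.

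First I would treat $i>0$. Substituting $i\mapsto i+1$ into Theorem~\ref{exterior:cellular-non-essential}, the group $\HC{i+1}(\widetilde{L};\Lambda^j F)$ vanishes unless $0\le i+1<\rk L$ --- equivalently, as $i>0$, unless $0<i<\rk L-1$ --- and $\rk L\le(i+1)+j\le\dim V$, in which case its dimension is $\dfrac{(-1)^{i+1}}{(\rk L-i-1)!}\binom{\dim U}{i+1+j-\rk L}\,\chiprimeperpk{\rk L-i-1}$. Transporting the isomorphism of Proposition~\ref{section3:fiddlingwith0:sheaf} through this yields the first bullet verbatim; in particular $\HS i=0$ once $i\ge\rk L-1$, since then $i+1\ge\rk L$ falls outside the support of $\HC\blob(\widetilde{L};-)$.

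Next I would treat $i=0$ via the displayed low-degree formula. By Theorem~\ref{exterior:cellular-non-essential}, $\HC 0(\widetilde{L};\Lambda^j F)$ vanishes outside the range $\rk L\le j\le\dim V$, where its dimension is $\tfrac1{(\rk L)!}\binom{\dim U}{j-\rk L}\chiprimeperpk{\rk L}$, while $\HC 1(\widetilde{L};\Lambda^j F)$ vanishes outside $\rk L-1\le j\le\dim V-1$, where its dimension is $-\tfrac1{(\rk L-1)!}\binom{\dim U}{j+1-\rk L}\chiprimeperpk{\rk L-1}$. I would then split on $j$: for $j<\rk L-1$ both contributions vanish and $\dim\HS 0=\binom{\dim V}{j}$; for $j=\rk L-1$ only the $\HC 1$ term survives and $\binom{\dim U}{0}=1$ collapses it to a single summand; for $\rk L\le j<\dim V$ both terms survive and combine; and for $j\ge\dim V$ the sheaf $\Lambda^j F$ is supported only at $\0$, so $\HS 0$ vanishes --- consistently with that range being absent from the statement, once one notes $\chiprimeperpk{\rk L}=(\rk L)!$ (the top derivative of the monic degree-$\rk L$ polynomial $\chi_{(L,F^\perp)}$) and that the surviving binomial is $1$. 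Assembling these and simplifying the binomial coefficients gives the remaining three itemised formulas, with the relation $\chi_L(t)=t^{\dim U}\chi_{(L,F^\perp)}(t)$ used to phrase the $j=\rk L-1$ line.

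The whole computation is essentially bookkeeping, so the hard part will not be a single idea but getting the degenerate boundary cases right: the behaviour at $j\in\{\rk L-1,\ \rk L\}$ and at $j=\dim V$, where binomial coefficients like $\binom{\dim U}{-1}$, $\binom{\dim U}{0}$ or $\binom{\dim U}{\dim U}$ kill or collapse terms, and in particular checking that the ``generic'' $i=0$ formula degenerates correctly to the $j=\rk L-1$ formula --- and, relatedly, pinning down whether it is the characteristic polynomial of $L$ or that of its essentialisation that belongs in that line --- is the step I would verify most carefully.
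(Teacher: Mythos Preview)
Your plan is precisely the route the paper takes: it gives no detailed proof for Theorem~\ref{exterior:sheaf-non-essential}, only the remark ``As in the essential case, we can convert this into a result about sheaf homology'', meaning one feeds Theorem~\ref{exterior:cellular-non-essential} through Proposition~\ref{section3:fiddlingwith0:sheaf}. Your treatment of the $i>0$ range, the degree-zero formula, and the case split on $j$ (including the $j\ge\dim V$ collapse) is correct.

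Your caution about the $j=\rk L-1$ line is well placed, but the resolution is the opposite of what you propose. Your direct computation gives
\[
\binom{\dim V}{\rk L-1}-\frac{1}{(\rk L-1)!}\,\chi_{(L,F^\perp)}^{(\rk L-1)}(1),
\]
and that is the correct answer; the relation $\chi_L(t)=t^{\dim U}\chi_{(L,F^\perp)}(t)$ does \emph{not} convert this into the expression with $\chi_L^{(\rk L-1)}(1)$. By Leibniz, $\chi_L^{(\rk L-1)}(1)$ picks up contributions from the lower derivatives $\chi_{(L,F^\perp)}^{(\ell)}(1)$ for $1\le\ell\le\rk L-2$, and these are generally nonzero (only the $\ell=0$ term vanishes, since $\chi_{(L,F^\perp)}(1)=0$). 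For a concrete check take $L=\Pi_4$ realised non-essentially in $k^4$, so $\rk L=3$, $\dim U=1$, $\chi_{(L,F^\perp)}(t)=(t-1)(t-2)(t-3)$ and $\chi_L(t)=t\,\chi_{(L,F^\perp)}(t)$: one finds $\chi_{(L,F^\perp)}''(1)=-6$ but $\chi_L''(1)=-2$, and the low-degree formula of Proposition~\ref{section3:fiddlingwith0:sheaf} yields $\dim\HS 0(L\setminus\0;\Lambda^2F)=9$, matching the essentialised version and not the one printed (which gives $7$). So resist the conversion --- the third bullet as stated contains a typo and should read $\chi_{(L,F^\perp)}^{(\rk L-1)}(1)$; your derivation already gives the right value.
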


%
%
%
\section*{References}

\begin{biblist}


\bib{Birkhoff79}{book}{
   author={Birkhoff, Garrett},
   title={Lattice theory},
   series={American Mathematical Society Colloquium Publications},
   volume={25},
   edition={3},
   publisher={American Mathematical Society},
   place={Providence, R.I.},
   date={1979},
   pages={vi+418},
   isbn={0-8218-1025-1},
}



\bib{Bjorner82}{article}{
   author={Bj{\"o}rner, Anders},
   title={On the homology of geometric lattices},
   journal={Algebra Universalis},
   volume={14},
   date={1982},
   number={1},
   pages={107--128},
   issn={0002-5240},
}



\bib{DanscoLicata15}{article}{
   author={Dansco, Zsuzsanna},
   author={Licata, Anthony},
   title={Odd Khovanov homology for hyperplane arrangements},
   journal={J. Algebra},
   volume={436},
   date={2015},
   pages={102--144},
  }

\bib{Everitt-Fountain13}{article}{
   author={Everitt, Brent},
   author={Fountain, John},
   title={Partial mirror symmetry, lattice presentations and algebraic
   monoids},
  journal={Proc. London Math. Soc.},
   volume={107},
   date={2013},
   number={3},
   pages={414--450},
}

\bib{EverittTurner19a}{article}{
   author={Everitt, Brent},
   author={Turner, Paul},
   title={Deletion-restriction for sheaf homology of graded atomic lattices},
   journal={Adv. Math.},
   volume={402},
   date={2022},
   pages={Paper No. 108354, 24},
   issn={0001-8708},
   doi={10.1016/j.aim.2022.108354},
}


\bib{EverittTurner15}{article}{
   author={Everitt, Brent},
   author={Turner, Paul},
   title={Cellular cohomology of posets with local coefficients},
   journal={J. Algebra},
   volume={439},
   date={2015},
   pages={134--158},
   issn={0021-8693},
}



\bib{Folkman66}{article}{
   author={Folkman, Jon},
   title={The homology groups of a lattice},
   journal={J. Math. Mech.},
   volume={15},
   date={1966},
   pages={631--636},
}

\bib{Gabriel_Zisman67}{book}{
   author={Gabriel, P.},
   author={Zisman, M.},
   title={Calculus of fractions and homotopy theory},
   series={Ergebnisse der Mathematik und ihrer Grenzgebiete, Band 35},
   publisher={Springer-Verlag New York, Inc., New York},
   date={1967},
   pages={x+168},
}



\bib{Lusztig74}{book}{
   author={Lusztig, George},
   title={The discrete series of $GL_{n}$ over a finite field},
   note={Annals of Mathematics Studies, No. 81},
   publisher={Princeton University Press, Princeton, N.J.; University of
   Tokyo Press, Tokyo},
   date={1974},
   pages={v+99},
}

\bib{Orlik-Terao92}{book}{
   author={Orlik, Peter},
   author={Terao, Hiroaki},
   title={Arrangements of hyperplanes},
   series={Grundlehren der Mathematischen Wissenschaften [Fundamental
   Principles of Mathematical Sciences]},
   volume={300},
   publisher={Springer-Verlag},
   place={Berlin},
   date={1992},
   pages={xviii+325},
   isbn={3-540-55259-6},
}

\bib{Quillen78}{article}{
   author={Quillen, Daniel},
   title={Homotopy properties of the poset of nontrivial $p$-subgroups of a
   group},
   journal={Adv. in Math.},
   volume={28},
   date={1978},
   number={2},
   pages={101--128},
   issn={0001-8708},
}



\bib{Stanley12}{book}{
   author={Stanley, Richard P.},
   title={Enumerative combinatorics. Volume 1},
   series={Cambridge Studies in Advanced Mathematics},
   volume={49},
   edition={2},
   publisher={Cambridge University Press},
   place={Cambridge},
   date={2012},
   pages={xiv+626},
   isbn={978-1-107-60262-5},
   review={\MR{2868112}},
}

\bib{MR2383131}{article}{
   author={Stanley, Richard P.},
   title={An introduction to hyperplane arrangements},
   conference={
      title={Geometric combinatorics},
   },
   book={
      series={IAS/Park City Math. Ser.},
      volume={13},
      publisher={Amer. Math. Soc., Providence, RI},
   },
   date={2007},
   pages={389--496},
 }




\bib{Yuzvinsky91}{article}{
   author={Yuzvinsky, Sergey},
   title={Cohomology of local sheaves on arrangement lattices},
   journal={Proc. Amer. Math. Soc.},
   volume={112},
   date={1991},
   number={4},
   pages={1207--1217},
   issn={0002-9939},
 }

  \bib{MR921071}{article}{
   author={Zaslavsky, Thomas},
   title={The M\"{o}bius function and the characteristic polynomial},
   conference={
      title={Combinatorial geometries},
   },
   book={
      series={Encyclopedia Math. Appl.},
      volume={29},
      publisher={Cambridge Univ. Press, Cambridge},
   },
   date={1987},
   pages={114--138},
}

\end{biblist}
%
%
%

\end{document}